\newtheorem{lemma}{Lemma}[section]
\newtheorem{theorem}[lemma]{Theorem}
\newtheorem{proposition}[lemma]{Proposition}
\newtheorem{conjecture}[lemma]{Conjecture}
\newtheorem{corollary}[lemma]{Corollary}
\theoremstyle{definition}
\newtheorem{definition}[lemma]{Definition}
\numberwithin{equation}{section}
\numberwithin{figure}{section}
\newcommand{\wti}[1]{{\widetilde #1}}
\newcommand{\nix}{{\mbox{\vphantom x}}}
\newcommand{\Aset}{\mathcal{A}}
\newcommand{\Bset}{\mathcal{B}}
\newcommand{\Cset}{\mathcal{C}}
\newcommand{\Lset}{\mathcal{L}}
\newcommand{\Nset}{\mathcal{N}}
\newcommand{\Sset}{\mathcal{S}}
\newcommand{\Xset}{\mathcal{X}}
\newcommand{\Yset}{\mathcal{Y}}
\begin{document}

\title{\Large{On the Gasca-Maeztu conjecture for $n=6$}}

\author{H. Hakopian, \  G. Vardanyan, \  N. Vardanyan}

\date{}

\maketitle

\begin{abstract}
A two-dimensional $n$-correct set is a set of nodes admitting unique
bivariate interpolation with polynomials of total degree at most ~$n$.
We are interested in correct sets with the property that all fundamental
polynomials are products of linear factors. In 1982, M.~Gasca and
J.~I.~Maeztu conjectured that any such set necessarily contains $n+1$
collinear nodes. So far, this had only been confirmed for $n\leq 5.$ In this paper, we make a step for proving the case $n=6.$ 
\end{abstract}

{\bf Key words:}
Gasca-Maeztu conjecture, fundamental
polynomial, algebraic curve, maximal line, maximal curve,  $n$-correct set,
$n$-independent set.

{\bf Mathematics Subject Classification (2010):} \\
primary: 41A05, 41A63; secondary 14H50.


\section{Introduction}
Denote by $\Pi_n$ the space of bivariate polynomials of total degree
$\le n,$ for which

$
\qquad\qquad\qquad N:=N_n:=\dim \Pi_n=(1/2){(n+1)(n+2)}.
$

\noindent Let
$\Xset:=\Xset_s=\{ (x_1, y_1), \dots , (x_s, y_s) \}$  be a set of $s$ distinct nodes in the plane.

The problem of finding a polynomial $p \in \Pi_n$ satisfying the conditions
\begin{equation}\label{int cond}
p(x_i, y_i) = c_i, \ \ \quad i = 1, 2, \dots s  ,
\end{equation}
for a data $\bar c:=\{c_1, \dots, c_s\}$ is called \emph{interpolation problem.}
\begin{definition}
A set of nodes $\Xset_s$ is called
$n$-\emph{correct} if for any data $\bar c$ there exists a
unique polynomial $p \in \Pi_n$, satisfying the conditions
\eqref{int cond}.
\end{definition}
A necessary condition of
$n$-correctness is: $\#\Xset_s=s = N.$

Denote by $p|_\Xset$ the restriction of $p$ on $\Xset.$
\begin{proposition} \label{correctii}
A set of nodes $\Xset$ with $\#\Xset=N$ is $n$-correct if and only if
$$p \in \Pi_n,\ p|_\Xset=0\implies p = 0.$$
\end{proposition}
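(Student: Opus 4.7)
The plan is to recast the definition of $n$-correctness as the bijectivity of a natural linear map and then invoke standard finite-dimensional linear algebra. Specifically, I would introduce the evaluation operator
\[
L \colon \Pi_n \to \mathbb{R}^N, \qquad L(p) = \bigl(p(x_1,y_1),\dots,p(x_N,y_N)\bigr),
\]
which is linear, and note that by assumption $\dim \Pi_n = N$, so both the domain and codomain have the same (finite) dimension.

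Next I would unpack Definition 1.1 in terms of $L$. The existence of an interpolant for every data vector $\bar c$ is precisely the surjectivity of $L$, and uniqueness of the interpolant is precisely the injectivity of $L$. Thus $n$-correctness is equivalent to $L$ being bijective. On the other hand, the hypothesis appearing in the proposition, namely that $p \in \Pi_n$ with $p|_\Xset = 0$ forces $p=0$, is by definition the statement that $\ker L = \{0\}$, i.e., that $L$ is injective.

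The final step is to observe that for a linear map between finite-dimensional vector spaces of equal dimension, injectivity, surjectivity, and bijectivity are equivalent (rank-nullity). This turns the one-sided implication (injectivity $\Leftrightarrow$ kernel condition) into the desired two-sided equivalence with $n$-correctness. There is no real obstacle here; the only thing to be careful about is making sure that both the hypothesis $\#\Xset = N$ and the definition $\dim \Pi_n = N$ are used, since without matching dimensions the equivalence between injectivity and surjectivity of $L$ would fail.
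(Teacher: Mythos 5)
Your argument is correct and is exactly the standard one: the paper states this proposition without proof precisely because it is the routine observation that the evaluation map $L\colon \Pi_n \to \mathbb{R}^N$ is linear between spaces of equal dimension $N$, so injectivity (the kernel condition) is equivalent to bijectivity ($n$-correctness) by rank--nullity. Nothing is missing, and you correctly identify where the hypothesis $\#\Xset = N$ is used.
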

A polynomial $p \in \Pi_n$ is called an $n$-\emph{fundamental polynomial}
for $ A \in \Xset$ if
$$ p|_{\Xset\setminus\{A\}}=0\ \hbox{and}\ p(A)=1.$$
We denote an
$n$-fundamental polynomial of $A \in\Xset$ by $p_A^\star=p_{A,\Xset}^\star.$

\begin{definition}
A set of nodes $\Xset$ is called $n$-\emph{independent} if each node
has $n$-fundamental polynomial. Otherwise, it is $n$-\emph{dependent.}
A set $\Xset$ is called \emph{essentially $n$-dependent} if none of its nodes
has $n$-fundamental polynomial.
\end{definition}
Fundamental polynomials are linearly independent. Therefore a
necessary condition of $n$-independence is $\#\Xset_s=s \le N.$

One can readily verify that a node set $\Xset_s$
is $n$-independent if and only if the interpolation problem
\eqref{int cond} is \emph{solvable}, i.e., for any data $\{c_1, \dots , c_s
\}$ there is a (possibly not unique) polynomial $p \in \Pi_n$
satisfying \eqref{int cond}.

A \emph{plane algebraic curve} is the zero set of some bivariate polynomial of degree $\ge 1.$~To simplify notation, we shall use the same letter,  say $p$,
to denote the polynomial $p$ and the curve given by the equation $p(x,y)=0$.
In particular, by $\ell$ (or $\alpha$) we denote a linear
polynomial from $\Pi_1$ and the line defined by the equation
$\ell(x, y)=0.$
\begin{definition} Let $\Xset$ be a set of nodes.
We say, that a line $\ell$ is a $k$-\emph{node line} if it passes through exactly $k$ nodes of $\Xset.$\end{definition}
The following proposition is well-known (see e.g. \cite{HJZ09b}
Prop. 1.3):
\begin{proposition}\label{prp:n+1ell}
Suppose that a polynomial $p \in
\Pi_n$ vanishes at $n+1$ points of a line $\ell.$ Then we have that
$
p = \ell  q  ,\ \text{where} \ q\in\Pi_{n-1}.
$
\end{proposition}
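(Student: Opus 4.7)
The plan is to reduce to the case where $\ell$ is a coordinate axis and then use the one-variable fundamental theorem of algebra. First, I would choose an affine (in fact linear) change of variables $(x,y)\mapsto(u,v)$ that carries the line $\ell$ to the line $\{v=0\}$, i.e., so that after the substitution $\ell$ becomes (a nonzero scalar multiple of) the polynomial $v$. Since affine substitutions preserve total degree, if $p\in\Pi_n$ then the transformed polynomial $\tilde p(u,v)$ is still in $\Pi_n$, and the claim $p=\ell q$ with $q\in\Pi_{n-1}$ is equivalent to $\tilde p = v\,\tilde q$ with $\tilde q\in\Pi_{n-1}$. So it suffices to handle the case $\ell(x,y)=y$.

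Now expand $p$ in powers of $y$:
\[
p(x,y)=\sum_{j=0}^{n} y^{j}\, a_{j}(x),\qquad \deg a_{j}\le n-j.
\]
Setting $y=0$ gives $p(x,0)=a_{0}(x)$, a univariate polynomial in $x$ of degree at most $n$. By hypothesis, $p$ vanishes at $n+1$ distinct points of the line $y=0$; these points have $n+1$ distinct $x$-coordinates, all of which are roots of $a_{0}$. A nonzero univariate polynomial of degree at most $n$ cannot have $n+1$ roots, so $a_{0}\equiv 0$.

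Consequently $y$ divides $p(x,y)$: we can factor
\[
p(x,y)=y\cdot q(x,y),\qquad q(x,y)=\sum_{j=1}^{n} y^{j-1}\, a_{j}(x).
\]
Each term $y^{j-1}a_{j}(x)$ has total degree at most $(j-1)+(n-j)=n-1$, so $q\in\Pi_{n-1}$. Undoing the change of variables returns a factorization $p=\ell\, q$ with $q\in\Pi_{n-1}$, as required.

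No serious obstacle is expected; the only thing to be careful about is that the change of variables preserves total degree (hence the need to use an affine substitution rather than something more exotic), and that the $n+1$ points on $\ell$ genuinely have $n+1$ distinct projections onto the transformed line — which is automatic because they are distinct points of a single line.
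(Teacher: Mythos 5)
Your proof is correct and complete: the reduction by an invertible affine change of variables to the case $\ell(x,y)=y$ is legitimate (such a substitution maps $\Pi_n$ bijectively onto $\Pi_n$), the vanishing of $a_0$ follows from the fundamental fact that a nonzero univariate polynomial of degree at most $n$ has at most $n$ roots, and the degree count for the quotient $q$ is right. The paper itself gives no proof of this proposition --- it is stated as well known with a citation to \cite{HJZ09b} --- and your argument is precisely the standard one found there, so there is nothing further to reconcile.
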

This implies that at most $n+1$ nodes of an $n$-independent set  can be collinear. An $(n+1)$-node line $\ell$ is called a \emph{maximal line} (C. de Boor, \cite{dB07}).

Set $$d(n, k) := N_n - N_{n-k} = (1/2) k(2n+3-k).$$

The following is a  generalization of Proposition \ref{prp:n+1ell}.
\begin{proposition}[\cite{Raf}, Prop. 3.1]\label{maxcurve}
Let $q$ be an algebraic curve of degree $k \le n$ with no multiple components. Then the following hold:

$(i)$ any subset of $q$ containing more than $d(n,k)$ nodes is
$n$-dependent;

$(ii)$ any subset ${\mathcal X}$ of $q$ containing exactly $d(n,k)$ nodes is $n$-independent if and only if
\vspace{-.5cm}

$$\quad p\in {\Pi_{n}}\ \text{and}\ \ p|_{{\mathcal X}} = 0 \implies  p = qr,\ \hbox{where}\ r \in \Pi_{n-k}.$$
\end{proposition}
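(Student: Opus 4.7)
My plan is to analyze the evaluation map $E\colon \Pi_n \to \mathbb{R}^{|\Yset|}$ defined by $E(p) = (p(A))_{A\in\Yset}$, and exploit the basic fact that $n$-independence of $\Yset$ is equivalent to surjectivity of $E$: a fundamental polynomial at $A\in\Yset$ is precisely a preimage of the standard basis vector $e_A$, so every node admits such a polynomial iff $E$ hits every $e_A$, iff $E$ is surjective. This is the same characterization as ``solvability'' of the interpolation problem recorded in the excerpt just before the proposition.

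The key observation is the inclusion $q\cdot\Pi_{n-k}\subset \ker E$: for any $r\in \Pi_{n-k}$ and $A\in\Yset\subset q$ one has $(qr)(A)=q(A)r(A)=0$. Because $q\neq 0$ and bivariate polynomials form an integral domain, multiplication by $q$ is injective, so $\dim(q\cdot\Pi_{n-k})=N_{n-k}$. Consequently
\[
\operatorname{rank}(E) \;=\; N_n - \dim\ker E \;\le\; N_n - N_{n-k} \;=\; d(n,k).
\]
Part (i) then follows at once: if $|\Yset|>d(n,k)$, then $\operatorname{rank}(E)<|\Yset|$, so $E$ fails to be surjective and $\Yset$ is $n$-dependent.

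For part (ii), I would specialize the set-up to $|\Xset|=d(n,k)$. In this range, $\Xset$ is $n$-independent iff $E$ is surjective iff $\dim\ker E = N_n - d(n,k) = N_{n-k}$. Since the subspace $q\cdot\Pi_{n-k}\subset \ker E$ already has dimension $N_{n-k}$, this dimensional equality is equivalent to the equality of subspaces $\ker E = q\cdot\Pi_{n-k}$, which is exactly the stated implication ``$p\in\Pi_n,\ p|_\Xset = 0\ \Rightarrow\ p=qr$ with $r\in\Pi_{n-k}$''. I do not foresee a substantive obstacle, since the whole argument amounts to a single dimension count. The hypothesis that $q$ has no multiple components enters only in the background: without it one can find lower-degree polynomials (\eg\ $\ell$ when $q=\ell^2$) lying in $\ker E$ but outside $q\cdot\Pi_{n-k}$, forcing $\Xset$ to be $n$-dependent and making (ii) vacuous rather than informative.
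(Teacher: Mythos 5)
Your argument is correct: the paper itself imports this proposition from \cite{Raf} without proof, and your dimension count on the evaluation map $E$ (using $q\cdot\Pi_{n-k}\subseteq\ker E$, injectivity of multiplication by $q$, and the equivalence of $n$-independence with surjectivity of $E$, which the paper records as solvability of the interpolation problem) is exactly the standard argument behind the cited result. The only cosmetic point is your closing remark: dropping the no-multiple-components hypothesis does not make (ii) ``vacuous'' so much as it makes both sides of the equivalence automatically false, since then $\ker E$ contains the strictly larger space $(q/\ell)\cdot\Pi_{n-k+1}$ and $\Xset$ is forced to be $n$-dependent.
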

\noindent Thus at most $d(n,k)$ $n$-independent nodes lie in a curve $q$ of degree $k \le n$.
\begin{definition}\label{def:maximal}
Let $\Xset$ be an $n$-independent set of nodes with $\#\mathcal X\ge d(n,k).$ A curve of degree $k \le n$ passing through $d(n,k)$ points
of $\mathcal X$ is called maximal.
\end{definition}
The following is a characterization of the maximal curves:
\begin{proposition}[\cite{Raf}, Prop. 3.3] \label{maxcor}
Let $\Xset$ be an $n$-independent set of nodes with with $\#\mathcal X\ge d(n,k).$ Then
a curve $\mu$ of degree $k,\ k\le n,$ is a maximal curve if and only if 
\vspace{-.5cm}

$$p\in\Pi_n,\ p|_{\mathcal X \cap\mu}=0 \implies p=\mu s,\ s\in\Pi_{n-k}.$$
\end{proposition}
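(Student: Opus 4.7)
The plan is to prove the equivalence by establishing each direction separately, in both cases using Proposition \ref{maxcurve} together with a dimension count on polynomial subspaces.

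For the forward direction, I assume $\mu$ is maximal of degree $k$, so $|\Xset\cap\mu|=d(n,k)$. The first issue to address is whether $\mu$ has multiple components: if its squarefree part $\bar\mu$ had degree $k'<k$, then $\Xset\cap\mu=\Xset\cap\bar\mu$ would be a subset of $\Xset$ of size $d(n,k)$ lying on a curve of degree $k'$; since $d(n,k)>d(n,k')$ (using that $d(n,\cdot)$ is strictly increasing on $\{0,\dots,n\}$), Proposition \ref{maxcurve}(i) would force $\Xset\cap\mu$ to be $n$-dependent, contradicting that it is a subset of the $n$-independent set $\Xset$. Hence $\mu$ has no multiple components, and $\Xset\cap\mu$ is an $n$-independent subset of $\mu$ of size exactly $d(n,k)$. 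Proposition \ref{maxcurve}(ii) then yields the required implication directly.

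For the backward direction, I assume the implication and aim to show $|\Xset\cap\mu|=d(n,k)$. The inequality $|\Xset\cap\mu|\le d(n,k)$ again follows from Proposition \ref{maxcurve}(i) applied to the squarefree part of $\mu$, since $\Xset\cap\mu$ is $n$-independent. For the reverse inequality, set $m:=|\Xset\cap\mu|$ and compare two subspaces of $\Pi_n$:
\begin{equation*}
V:=\{p\in\Pi_n:p|_{\Xset\cap\mu}=0\},\qquad W:=\{\mu s:s\in\Pi_{n-k}\}.
\end{equation*}
Vanishing at $m$ points imposes at most $m$ linear conditions on $\Pi_n$, so $\dim V\ge N_n-m$. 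On the other hand, since the polynomial ring is an integral domain, multiplication by $\mu$ is injective and $\dim W=N_{n-k}$. The hypothesized implication is precisely $V\subseteq W$, so $N_n-m\le\dim V\le\dim W=N_{n-k}$, giving $m\ge N_n-N_{n-k}=d(n,k)$. Combined with the earlier inequality, $m=d(n,k)$, so $\mu$ is maximal.

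I do not anticipate a serious obstacle; the only subtlety worth flagging is the possibility that $\mu$ has repeated components, which is ruled out in both directions by the monotonicity of $d(n,k)$ in $k$ and Proposition \ref{maxcurve}(i). The rest is a clean dimension argument combined with the invocation of Proposition \ref{maxcurve}(ii).
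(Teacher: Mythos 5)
Your argument is correct: the forward direction is a clean reduction to Proposition~\ref{maxcurve}(ii) after ruling out multiple components via monotonicity of $d(n,\cdot)$, and the backward direction's dimension count ($N_n-m\le\dim V\le\dim W=N_{n-k}$) is valid. The paper itself gives no proof, citing \cite{Raf} instead, but your route is essentially the standard one for this characterization, so there is nothing to flag.
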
One readily gets from here that for a $GC_n$ set $\Xset$ and $\mu\in\Pi_k:$
\begin{equation}\label{maxmax}\mu \hbox{ is a maximal curve }  \iff \Xset\setminus \mu  \hbox{ is a $GC_{n-k}$ set.}\end{equation} 
In the sequel we will need the following results:
\begin{theorem}[case i=1:  \cite{HakTor}, Thm.~4.2; case i=2: \cite{HK}, Thm.~3] \label{th:-1+1}
Let $i=1$ or $2.$ Assume that ${\mathcal X}$ is an $n$-independent set of $d(n, k-i)+i$ nodes with $1+i\le k\le n-1.$ Then at most $2i$ different curves of degree
$\le k$ may pass through all the nodes of ${\mathcal X}.$

Moreover, there are such $2i$ curves for the set ${\mathcal X}$ if and only if all the nodes of ${\mathcal X}$ but $i$ lie in a maximal curve of degree $k-i.$
\end{theorem}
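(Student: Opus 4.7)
My plan is to recast the conclusion as an assertion about the dimension of the linear system $V := \{p \in \Pi_k : p|_\mathcal{X} = 0\}$: namely $\dim V \le 2i$, with equality if and only if all but $i$ nodes of $\mathcal{X}$ lie on a maximal curve of degree $k-i$ (so ``$2i$ different curves'' means ``$2i$ linearly independent polynomials of degree $\le k$ vanishing on $\mathcal{X}$''). I would prove the easy (``if'') direction first, then handle the upper bound and the ``only if'' direction together via a case analysis on the fixed common factor of $V$.

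\emph{Easy direction.} Let $\mu_0$ be the assumed maximal curve of degree $k-i$ with $|\mathcal{X}\cap\mu_0|=d(n,k-i)$, and set $\mathcal{X}'' := \mathcal{X}\setminus\mu_0$ (so $|\mathcal{X}''|=i$). Any $p\in V$ lies in $\Pi_n$ and vanishes on the maximal set $\mathcal{X}\cap\mu_0$, so Proposition~\ref{maxcor} yields $p=\mu_0 r$ with $r\in\Pi_{n-(k-i)}$; the bound $\deg p \le k$ forces $r\in\Pi_i$, and non-vanishing of $\mu_0$ on $\mathcal{X}''$ gives $r|_{\mathcal{X}''}=0$. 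Conversely every such $\mu_0 r$ is in $V$. Thus $\dim V = \dim\{r\in\Pi_i:r|_{\mathcal{X}''}=0\} = N_i - i$, using that $i$ distinct points impose $i$ independent conditions on $\Pi_i$ (trivial for $i=1$; for $i=2$ via a separating line). The identity $N_i-i=2i$ reduces to $(i-1)(i-2)=0$, which is precisely why the theorem is restricted to $i\in\{1,2\}$.

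\emph{Upper bound and converse.} Let $g:=\gcd(V)$, $g_0:=\deg g$, and factor $V = g\cdot W$ with $W\subseteq\Pi_{k-g_0}$ having no fixed component (so $\dim V = \dim W$). Three cases arise. If $g_0 > k-i$ then $\dim V \le N_{k-g_0} \le N_{i-1}$, which equals $1$ or $3$ for $i=1,2$, both strictly below $2i$. If $g_0 = k-i$ then $W\subseteq \Pi_i$ vanishes on $\mathcal{X}\setminus g$; Proposition~\ref{maxcurve}(i) gives $|\mathcal{X}\cap g|\le d(n,k-i)$, hence $|\mathcal{X}\setminus g|\ge i$, and an evaluation-rank count on these $n$-independent points yields $\dim W \le N_i - i = 2i$, with equality iff $|\mathcal{X}\setminus g|=i$ exactly, i.e., iff $g$ is itself the advertised maximal curve. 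The hard case is $g_0 < k-i$: Bezout applied to two coprime elements of $W$ gives $|\mathcal{X}\setminus g|\le (k-g_0)^2$, which combined with $|\mathcal{X}\cap g|\le d(n,g_0)$ yields the Plucker-type bound $d(n,k-i)+i \le d(n,g_0)+(k-g_0)^2$. Many values of $g_0$ are eliminated by this inequality directly; the surviving intermediate values must be excluded by iterating the fixed-part decomposition on the smaller linear system $W$ over the still $n$-independent subset $\mathcal{X}\setminus g$.

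\emph{Main obstacle.} This recursive step is the crux. For $i=1$ the iteration closes quickly, producing either a maximal curve of degree $k-1$ covering all but one node, or a direct contradiction from combining Bezout with Proposition~\ref{maxcurve}(i). For $i=2$ the analysis branches further, depending on whether the fixed component of the smaller system is itself maximal, a proper subfactor of a maximal curve, or decomposes into two factors of complementary degree whose joint behaviour on the two off-curve nodes must be tracked separately. This is precisely why the case $i=2$ requires the separate treatment of \cite{HK} beyond the argument in \cite{HakTor}.
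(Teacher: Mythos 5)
This statement is not proved in the paper at all: it is imported verbatim from the literature (case $i=1$ from \cite{HakTor}, Thm.~4.2; case $i=2$ from \cite{HK}, Thm.~3), so there is no internal proof to compare against. Judging your proposal on its own merits: your reading of ``$2i$ different curves'' as $\dim V\le 2i$ for $V=\{p\in\Pi_k: p|_{\Xset}=0\}$ is the right one (it is the only reading compatible with the ``moreover'' clause, since the extremal configuration produces a whole pencil, resp.\ web, of curves), and your ``if'' direction and the cases $\deg g\ge k-i$ are essentially correct, modulo the routine check that the maximal curve cannot contain any of the remaining $i$ nodes.

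The genuine gap is the case $g_0:=\deg\gcd(V)<k-i$, which you correctly identify as the crux but do not actually prove. Your proposed tool there, Bezout applied to two coprime members of the moving part, gives $\#(\Xset\setminus g)\le (k-g_0)^2$ and hence $d(n,k-i)+i\le d(n,g_0)+(k-g_0)^2$, but this inequality simply fails to produce a contradiction in the most important subcase. Take $i=1$, $g_0=0$, $k=n-1$: the claim would be that no two coprime curves of degree $\le n-1$ pass through $d(n,n-2)+1=\tfrac12(n-2)(n+5)+1$ independent nodes, while Bezout only forbids more than $(n-1)^2$ common points, and $(n-1)^2-\tfrac12(n-2)(n+5)-1=\tfrac12(n-2)(n-5)\ge 0$ for $n\ge 5$. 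So Bezout plus the count $\#(\Xset\cap g)\le d(n,g_0)$ leaves the coprime case wide open, and when $g_0=0$ there is no fixed part left on which to ``iterate the decomposition,'' so the recursion you gesture at has no base. What is missing is precisely the mechanism of \cite{HakTor} and \cite{HK}: an induction on $k$ that exploits $n$-independence constructively (fundamental polynomials of the nodes, Proposition~\ref{maxcurve}(ii) to force divisibility by subcurves, and the already-established lower rungs of the hierarchy of such uniqueness results), not just cardinality bounds. As written, the upper bound $\dim V\le 2i$ and the ``only if'' direction are asserted rather than proved.
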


\begin{theorem}[\cite{HKV}, Thm.~2.5]\label{th:-2+3}
Assume that ${\mathcal X}$ is an $n$-independent set of $d(n, k-2)+3$ nodes with $3\le k\le n-2.$ Then at most three linearly independent curves of degree
$\le k$ may pass through all the nodes of ${\mathcal X}.$

 Moreover, there are such three curves for the set ${\mathcal X}$ if and only if all the nodes of ${\mathcal X}$  lie in a  curve of degree $k-1,$ or all the nodes of  ${\mathcal X}$ but three lie in a (maximal) curve of degree $k-2.$
\end{theorem}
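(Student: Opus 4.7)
The plan is to prove the upper bound by contradiction (using Theorem~\ref{th:-1+1} in case $i=2$ and Proposition~\ref{maxcor}), and to split the characterization into sufficiency (direct construction) and necessity, the latter reducing to a GCD analysis of the three curves in which the subcase of small common factor is the main technical hurdle.

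For the upper bound I suppose, for contradiction, that four linearly independent curves $q_1,\ldots,q_4\in\Pi_k$ vanish on $\mathcal{X}$. Removing any single node $A\in\mathcal{X}$ gives $\mathcal{X}':=\mathcal{X}\setminus\{A\}$ of size $d(n,k-2)+2$, still $n$-independent and still annihilated by each $q_i$. Applying Theorem~\ref{th:-1+1} with $i=2$ to $\mathcal{X}'$ forces all but two of its nodes to lie on a maximal curve $\mu$ of degree $k-2$. The bound $\#(\mathcal{X}\cap\mu)\le d(n,k-2)$ from Proposition~\ref{maxcurve} then places $A$ off $\mu$, so $\mathcal{X}$ has exactly $d(n,k-2)$ nodes on $\mu$ and three nodes $A,B_1,B_2$ outside. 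Proposition~\ref{maxcor} writes $q_i=\mu s_i$ with $s_i\in\Pi_2$; the $s_i$ are linearly independent and all vanish at $A,B_1,B_2$, contradicting the fact that conics through three distinct points form only a three-dimensional space.

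Sufficiency in the characterization is immediate by explicit construction: if $\mathcal{X}\subset\nu$ for some $\nu\in\Pi_{k-1}$, the map $\alpha\mapsto\nu\alpha$ embeds $\Pi_1$ (dimension three) into the polynomials of degree $\le k$ vanishing on $\mathcal{X}$; if all but three nodes $A_1,A_2,A_3$ lie on a maximal $\mu\in\Pi_{k-2}$, then $s\mapsto\mu s$ embeds $\{s\in\Pi_2:s(A_1)=s(A_2)=s(A_3)=0\}$ (also dimension three) into the same space. For necessity I take independent $q_1,q_2,q_3\in\Pi_k$ through $\mathcal{X}$ and let $\delta:=\gcd(q_1,q_2,q_3)$ with $\deg\delta\le k-1$. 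If $\deg\delta=k-1$, then $q_i=\delta\ell_i$ with linearly independent $\ell_i\in\Pi_1$; three independent linear forms have no common zero, so $\delta$ vanishes on all of $\mathcal{X}$, giving case~(a). If $\deg\delta=k-2$, write $q_i=\delta s_i$ with independent $s_i\in\Pi_2$ and $\gcd(s_1,s_2,s_3)=1$; by B\'ezout the base locus $Z(s_1,s_2,s_3)$ has at most four points, and combined with $\#(\mathcal{X}\cap\{\delta=0\})\le d(n,k-2)$ this leaves either three or four off-$\delta$ nodes. Four would have to be collinear (else they would impose four independent conditions on $\Pi_2$), but then the common line would divide every $s_i$, contradicting $\gcd=1$; so there are exactly three off-$\delta$ nodes and $\delta$ carries $d(n,k-2)$, giving case~(b).

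The subcase $\deg\delta\le k-3$ is the main obstacle. Setting $j:=k-\deg\delta\ge3$ and $t_i:=q_i/\delta$ with $\gcd(t_1,t_2,t_3)=1$, the same counts give $\#(\mathcal{X}\cap\{\delta=0\})\le d(n,k-j)$ and $\#Z(t_1,t_2,t_3)\le j^2$. The inequality $d(n,k-2)+3\le d(n,k-j)+j^2$ rules out many parameter triples---for $n-k\ge 3$ and small $j$ a direct computation already produces the contradiction---but configurations with $n-k=2$, or with $n-k$ moderate and $j$ comparable to $k$, remain compatible with the counts. Treating these will require a finer analysis of how the $t_i$ factor into irreducible components, probably combined with iterated applications of Theorem~\ref{th:-1+1} to the traces of $\mathcal{X}$ on those components, to exclude every remaining configuration. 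I expect this last step to consume the bulk of the technical effort.
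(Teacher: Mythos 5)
You should first note that the paper does not prove this statement at all: it is imported verbatim from \cite{HKV} (a reference essentially devoted to establishing it), so there is no in-paper proof to compare against. Judged on its own terms, your proposal is partly successful. The upper bound is proved correctly and cleanly: removing one node, invoking Theorem~\ref{th:-1+1} with $i=2$ to extract the maximal curve $\mu$ of degree $k-2$, and then counting conics through the three residual nodes is a valid argument (granting the standard reading of ``$2i$ different curves'' as $2i$ linearly independent ones, which is how the present paper itself uses that theorem). The sufficiency direction and the necessity subcases $\deg\delta=k-1$ and $\deg\delta=k-2$ are also handled correctly, including the observation that four non-collinear points impose independent conditions on $\Pi_2$ while four collinear ones would force a common linear factor.

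The genuine gap is exactly where you flag it: the necessity direction for $\deg\delta\le k-3$. This is not a routine loose end but the substantive content of the theorem, and your own counting inequality $d(n,k-2)+3\le d(n,k-j)+j^2$ shows that cardinality arguments cannot close it: with $m:=n-k$ and $j=3$ the inequality reads $m+7\le 9$, so it is satisfied (with equality) precisely at the admissible boundary $k=n-2$, and it is also satisfied for all large $j$, e.g.\ $\delta$ constant with $n=5$, $k=3$ leaves nine $5$-independent nodes potentially carrying three independent cubics --- a Cayley--Bacharach-type configuration whose exclusion requires an actual structural argument (the space of cubics through the nine base points of a pencil is only two-dimensional, but proving the analogous statement for all residual degrees $j$ and all factorizations of the $t_i$ is the hard part). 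Your closing sentence proposes only a direction (``finer analysis of the irreducible components, iterated applications of Theorem~\ref{th:-1+1}'') rather than an argument, so the characterization half of the theorem remains unproved.
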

Below we bring a characterization of $n$-dependent sets $\Xset$ with $\#\Xset\le 3n.$
\begin{theorem}[\cite{HM}, Thm.~5.1]\label{th:3n}
A set $\Xset$ consisting of at most $3\,n$ nodes is $n$-dependent if
and only if one of the following conditions holds.
\vspace{-2mm}
\begin{enumerate}
\setlength{\itemsep}{0mm}
\item
$n+2$ nodes are collinear,
\item
$2n+2$ nodes belong to a (possibly reducible) conic,
\item
$\#\Xset = 3n,$ and there exist $\gamma\in\Pi_3$ and $\sigma\in\Pi_n$
such that $\Xset = \gamma \cap \sigma$\,.
\end{enumerate}
\end{theorem}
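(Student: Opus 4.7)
The plan is to prove the theorem in two directions, the forward one being a direct consequence of the earlier propositions and the converse proceeding by induction on $n$, with all the difficulty concentrated in one non-inductive subcase.

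For the ``if'' direction I treat the three structural conditions separately. Condition (i): for any of the $n+2$ collinear nodes $A$ on a line $\ell$, a hypothetical $n$-fundamental polynomial $p_A^\star$ vanishes at the $n+1$ other collinear points, so Proposition~\ref{prp:n+1ell} forces $\ell\mid p_A^\star$ and hence $p_A^\star(A)=0$, a contradiction. Condition (ii): since $d(n,2)=2n+1$, having $2n+2$ nodes on a conic exceeds the maximal-curve bound, and Proposition~\ref{maxcurve}(i) gives $n$-dependence. Condition (iii): since $d(n,3)=3n$ and the intersection $\gamma\cap\sigma$ is finite, Bezout forces $\gamma$ and $\sigma$ to have no common component; if $\mathcal{X}$ were $n$-independent, Proposition~\ref{maxcurve}(ii) applied to $\gamma$ would demand $\sigma=\gamma r$ with $r\in\Pi_{n-3}$, a contradiction. (For $n\le 2$ condition (iii) is subsumed by (i) or (ii).)

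For the converse I induct on $n$, verifying $n=1,2$ by direct inspection. Assume the theorem holds below degree $n$, and let $\mathcal{X}$ be $n$-dependent with $\#\mathcal{X}\le 3n$ and not of type (i), (ii), or (iii); I seek a contradiction. First, suppose some maximal line $\ell$ (exactly $n+1$ nodes) is present. Splitting every $p\in\Pi_n$ as $\ell q+p_0$ with $q\in\Pi_{n-1}$ and $p_0$ fixed by univariate Lagrange interpolation on $\ell$ of its prescribed values at the $n+1$ nodes there, one checks that $\mathcal{X}$ is $n$-independent iff $\mathcal{X}':=\mathcal{X}\setminus\ell$ is $(n-1)$-independent, so $\mathcal{X}'$ is $(n-1)$-dependent of size $\le 2n-1$. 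Size counts preclude the inductive forms (ii$'$) and (iii$'$), leaving only (i$'$): $n+1$ nodes of $\mathcal{X}'$ lie on a line $\ell'\ne\ell$. Since no line of $\mathcal{X}$ carries $n+2$ nodes, $\ell\cap\ell'\notin\mathcal{X}$, hence the reducible conic $\ell\ell'$ contains $(n+1)+(n+1)=2n+2$ distinct nodes of $\mathcal{X}$, contradicting the failure of (ii). Thus no maximal line exists.

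The residual subcase --- every line carries $\le n$ nodes and every conic $\le 2n+1$ --- is the crux and must be handled directly. Pick a node $A$ without an $n$-fundamental polynomial; then $K:=\{p\in\Pi_n:p|_{\mathcal{X}\setminus\{A\}}=0\}$ equals $\{p\in\Pi_n:p|_{\mathcal{X}}=0\}$, and a dimension count gives $\dim K\ge\tfrac12(n^2-3n+4)\ge 2$ for $n\ge 3$. The goal is to produce a cubic $\gamma\in\Pi_3$ through $\mathcal{X}$ together with a curve $\sigma\in K$ having no component of $\gamma$; Bezout then yields $\mathcal{X}\subseteq\gamma\cap\sigma$ with $|\gamma\cap\sigma|\le 3n$, forcing equality $\mathcal{X}=\gamma\cap\sigma$ and $\#\mathcal{X}=3n$, i.e.\ case (iii), contradicting the standing assumption that (iii) fails. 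To extract $\gamma$, I would apply Theorems~\ref{th:-1+1} and~\ref{th:-2+3} with $k=3$ to a suitable $n$-independent subset of $\mathcal{X}\setminus\{A\}$ (iterating the removal of defective nodes if needed): the structural dichotomies there concern alternatives where ``all but $i$ nodes lie on a maximal curve of degree $k-i$,'' and the hypotheses on lines and conics exclude the degenerate alternatives, forcing the existence of a cubic through all nodes. The principal technical hurdle is exactly this verification: making the theorems applicable at the right size, ensuring that $\gamma$ is a genuine cubic rather than a line-times-conic or a triple line disguised by the dimension count, and arguing that a companion $\sigma\in K$ independent of $\gamma$ exists. This delicate bookkeeping, which makes essential use of the negations of (i) and (ii), constitutes the heart of the proof.
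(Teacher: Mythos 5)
This statement is quoted from \cite{HM} (Thm.~5.1); the paper contains no proof of it, so there is nothing internal to compare against. Judged on its own, your proposal is sound in the easy places and has a genuine hole exactly where the theorem is hard. The ``if'' direction is fine (modulo the minor remark that a conic or cubic with a multiple component must be folded back into cases (i)--(ii) before Proposition~\ref{maxcurve} applies), and the reduction of the converse via a maximal line is a correct and standard peeling argument: the size counts $2n\le 2n-1$ and $3n-3\le 2n-1$ do kill the inductive alternatives (ii$'$) and (iii$'$) for $n\ge 3$, and the $2n+2$-point conic $\ell\ell'$ emerges as you say.

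The gap is the residual case, which you acknowledge but do not close, and the mechanism you propose for it cannot work as described. Theorems~\ref{th:-1+1} and~\ref{th:-2+3} give \emph{upper} bounds on the number of curves of degree $\le k$ through an $n$-independent set of a prescribed size ($n+3$, $n+4$, or $2n+2$ nodes for $k=3$), together with structure when that bound is attained; they do not provide the \emph{existence} of a cubic through all of $\Xset$, which is what your plan needs. Since $\#\Xset$ can be as large as $3n\gg 9$, the assertion that $\Xset$ imposes at most $9$ conditions on $\Pi_3$ is a strong claim requiring its own argument, and nothing in the proposal supplies it. (There is also a circularity risk: those two theorems postdate \cite{HM} and rest on machinery of the same kind.) Separately, even granting a cubic $\gamma\supset\Xset$ and a companion $\sigma\in K$ with no common component, B\'ezout gives only $\Xset\subseteq\gamma\cap\sigma$ with $\#(\gamma\cap\sigma)\le 3n$; this does not ``force equality'' $\Xset=\gamma\cap\sigma$ nor $\#\Xset=3n$, so when $\#\Xset<3n$ (or when the containment is proper) you reach neither case (iii) nor a contradiction. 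The entire content of \cite{HM}'s Theorem~5.1 lives in this subcase, and the proposal leaves it unproved.
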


\begin{corollary}\label{cor:3n-1}
A set $\Xset$ consisting of at most $3n-1$ nodes is $n$-dependent if
and only if either $n+2$ nodes are collinear, or
$2n+2$ nodes belong to a (possibly reducible) conic.
\end{corollary}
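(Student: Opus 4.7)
The plan is to deduce Corollary~\ref{cor:3n-1} directly from Theorem~\ref{th:3n}, with no substantial new work required. The corollary is just the restriction of that theorem to one fewer node, where the third (complete-intersection) alternative is automatically excluded by cardinality.

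For the sufficiency direction I would simply observe that if $n+2$ nodes of $\Xset$ are collinear, or $2n+2$ nodes of $\Xset$ lie on a (possibly reducible) conic, then $\Xset$ satisfies condition (i) or (ii) of Theorem~\ref{th:3n}; that theorem then yields immediately that $\Xset$ is $n$-dependent.

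For the necessity direction I would assume $\Xset$ is $n$-dependent with $\#\Xset\le 3n-1$. Since in particular $\#\Xset\le 3n$, Theorem~\ref{th:3n} applies and forces at least one of its three listed conditions on $\Xset$. Condition (iii) explicitly demands $\#\Xset=3n$, which is incompatible with $\#\Xset\le 3n-1$ and is therefore ruled out. Hence condition (i) or (ii) must hold, which is exactly the conclusion of the corollary.

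There is essentially no obstacle: the entire content lives in Theorem~\ref{th:3n}. The only thing worth double-checking is that conditions (i) and (ii) of that theorem impose no hidden cardinality restriction on $\Xset$ beyond the presence of the stated special configuration (they do not), so they remain valid sufficient conditions when $\#\Xset$ is anywhere in the range under consideration.
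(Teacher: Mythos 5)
Your proposal is correct and matches the paper's intent exactly: the paper states Corollary~\ref{cor:3n-1} without proof as an immediate consequence of Theorem~\ref{th:3n}, and your argument (both directions follow from the theorem, with alternative (iii) excluded because it requires $\#\Xset=3n$) is precisely that immediate deduction.
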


Consider special $n$-correct sets: $GC_n$ sets,  defined by Chung and Yao:
\begin{definition}[\cite{CY77}]
An n-correct set $\Xset$ is called $GC_n$ \emph{set}, if the $n$-fundamental polynomial of each node $A\in\Xset$ is a product of
$n$ linear factors.
\end{definition}

Now we are in a position to present the Gasca-Maeztu, or briefly GM
\begin{conjecture}[\cite{GM82}, 1982] \label{GM}
Any $GC_n$ set contains $n+1$ collinear nodes.
\end{conjecture}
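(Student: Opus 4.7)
The plan is to proceed by strong induction on $n$. The cases $n \leq 5$ are settled in the cited literature, so one may assume $n \geq 6$ together with the inductive hypothesis that every $GC_m$ set with $m < n$ contains $m+1$ collinear nodes. Let $\Xset$ be a $GC_n$ set of cardinality $N_n$; the goal is to produce an $(n+1)$-node line. By the equivalence (\ref{maxmax}), once such a maximal line $\ell$ is found, $\Xset \setminus \ell$ is a $GC_{n-1}$ set, so the induction propagates cleanly once the existence step is secured.

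For each $A \in \Xset$ factor $p_A^\star = \ell_1^A \cdots \ell_n^A$, let $\Lset$ be the collection of distinct lines arising among the $\ell_i^A$, and for $\ell \in \Lset$ set $k_\ell := \#(\Xset \cap \ell)$ and $m_\ell := \#\{A : \ell \mid p_A^\star\}$. Each $A \in \ell$ forces its own fundamental polynomial to vanish on the $k_\ell - 1$ other points of $\Xset \cap \ell$, which by Proposition \ref{prp:n+1ell} would immediately produce a maximal line whenever $k_\ell \geq n+1$. Assume then, for contradiction, that $k_\ell \leq n$ for every $\ell \in \Lset$. A double count gives the global identity $\sum_{\ell \in \Lset} m_\ell = nN_n$, while local incidence at each node links $m_\ell$ to $k_\ell$ via the requirement that the nodes not covered by $\ell$ must be distributed among the remaining $n-1$ factors of each $p_A^\star$ with $\ell \mid p_A^\star$.

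To extract a contradiction from the ``no maximal line'' hypothesis, the next step is to apply Theorem \ref{th:3n} and Corollary \ref{cor:3n-1} to carefully chosen subsets $\Xset' \subset \Xset$ obtained by removing the small node sets covered by the most heavily used lines of $\Lset$. Such $\Xset'$ should, by cardinality, be forced into one of the three dependence configurations of Theorem \ref{th:3n} (a collinearity of $n+1$, a conic through $2n$ nodes, or a cubic-times-$\Pi_n$ structure); each alternative will either directly deliver the required $(n+1)$-node line or, via Proposition \ref{maxcor} and (\ref{maxmax}), produce a smaller $GC$ set to which the inductive hypothesis applies. Theorems \ref{th:-1+1} and \ref{th:-2+3} would then control the curves passing through the residual nodes, pushing any conic or cubic alternative back up to a collinearity in the ambient $\Xset$.

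The principal obstacle --- and the reason the conjecture has remained open for $n \geq 6$ despite decades of attention --- is the combinatorial explosion in this ``no maximal line'' analysis. Existing proofs for $n \leq 5$ proceed by enumerating cases according to the profile of $\Lset$ (how many $\ell$ satisfy $k_\ell = n$, $k_\ell = n-1$, and so on), and for larger $n$ these profiles proliferate beyond what ad hoc case analysis can absorb. The hard part of the plan is therefore to replace enumeration by a uniform rigidity statement: most plausibly, a sharpening of Proposition \ref{maxcor} or Theorem \ref{th:-2+3} that prevents near-maximal configurations from coexisting in the numbers required by $\sum_\ell m_\ell = nN_n$ without some $\ell$ achieving $k_\ell = n+1$. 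Formulating and proving such a rigidity statement is, in my view, the genuine difficulty of the conjecture.
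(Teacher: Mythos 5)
You were asked to prove Conjecture \ref{GM}, but this is the open Gasca--Maeztu conjecture itself: the paper does not prove it, and for $n\ge 6$ nobody has. The paper's contribution is only a partial step towards $n=6$, namely Proposition \ref{thm:main3} (a $GC_6$ set with no maximal line contains no node with m-d sequence $(6,6,6,4,3,2)$), established by a long case analysis of how often individual lines can be shared. Your text is likewise not a proof: it is a strategy outline whose decisive step --- the ``uniform rigidity statement'' that would force some line to reach $n+1$ nodes under the no-maximal-line hypothesis --- is explicitly left unformulated and unproved. Since that step is the entire content of the conjecture, the gap is not incidental: nothing preceding it does real work towards a contradiction.

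Concretely: (a) the induction scaffolding is vacuous for the existence question. The hypothesis that GM holds for all $m<n$ is never used in your argument except \emph{after} a maximal line is found, and \eqref{maxmax} only converts a maximal line already in hand into a smaller $GC$ set; the existence step must be done from scratch for each $n$. (Where inductive knowledge genuinely helps is elsewhere: under the no-maximal-line assumption, maximal curves of low degree carve out small $GC_k$ subsets --- like the $GC_3$ set $\Bset$ of Section \ref{case666432} --- whose classified geometry, via Chung--Yao, Carnicer--Gasca and principal-lattice structure, drives the paper's counting.) (b) Your reading of Theorem \ref{th:3n} is off: dependence requires $n+2$ collinear nodes or $2n+2$ nodes on a conic, not $n+1$ and $2n$; and before applying it one must show the residual set is dependent and has at most $3n$ nodes, which goes through Proposition \ref{prp:kell} and needs lower bounds on how many nodes use a given line --- exactly the bookkeeping of Propositions \ref{prp:2nl}--\ref{prp:6810} and the table \eqref{eq:table}, which occupies most of the paper and is carried out only for $n=6$ and, even then, only suffices to exclude one m-d sequence. (c) The double count $\sum_{\ell}m_\ell=nN_n$ is correct (the $n$ factors of each fundamental polynomial are necessarily distinct, since $n-1$ lines can hold at most $d(n,n-1)=N_n-3$ independent nodes), but it is toothless: without per-line upper bounds for $m_\ell$ in terms of $k_\ell$ it constrains nothing, and producing such bounds is precisely the hard case analysis you propose to skip.
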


So far, the GM conjecture has been confirmed to be true only for $n \leq 5$. The case $n=2$ is trivial. The case $n=3$ was established by M. Gasca and J. I. Maeztu in \cite{GM82}. The case
$n = 4$ was proved by J. R.  Busch \cite{B90}. Other proofs of this case have been published since then (see e.g. \cite{CG01}, \cite{HJZ09b}). The case $n=5$ was proved by H. Hakopian, K. Jetter and G. Zimmermann \cite{HJZ14}. Recently G. Vardanyan provided a simpler and shorter proof for this case \cite{GV2}.

In this paper we make a step in proving the Gasca-Maeztu conjecture for $n=6$ (see Proposition \ref{thm:main3}). The analogue of this step was crucial in the proof of the case  $n=5$ (see \cite{HJZ14}, Prop. 3.12; \cite{GV2}, Prop. 2.8).

\begin{definition} Let $\Xset$ be an $n$-correct set.
We say, that a node $A\in\Xset$
\emph{uses a line} $\ell$, if  $p_{A}^\star=\ell q,\quad q\in\Pi_{n-1}.$
\end{definition}

Since the fundamental polynomial in an $n$-correct set is unique we
get
\begin{lemma} \label{lm}
Suppose $\Xset$ is an $n$-correct set and a node  $A\in \Xset$ uses a line $\ell.$ Then $\ell$ passes through at least two nodes from $\Xset$, at which $q$ from the above definition does not vanish.
 \end{lemma}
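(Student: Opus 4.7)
The plan is a short proof by contradiction that exploits the uniqueness of the fundamental polynomial in an $n$-correct set. Evaluating the identity $p_A^\star = \ell q$ at $A$ gives $\ell(A) q(A) = p_A^\star(A) = 1$, so neither factor vanishes at $A$; in particular $A \notin \ell$. Define
$$ T := \{\, B \in \Xset \setminus \{A\} : \ell(B) = 0 \text{ and } q(B) \neq 0 \,\}, $$
which is precisely the set of nodes on $\ell$ at which $q$ does not vanish. The task becomes showing $|T| \geq 2$.

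Assume, for the sake of contradiction, that $|T| \leq 1$. The idea is to manufacture a second fundamental polynomial for $A$ by replacing $\ell$ with a different line that still passes through the exceptional point(s) of $T$. Concretely, pick a line $\ell'$ with (i) $\ell'(A) \neq 0$, (ii) $\ell'$ vanishing on every point of $T$, and (iii) $\ell' \neq \ell$ as lines in the plane. Such an $\ell'$ exists since at most finitely many lines are forbidden: when $T = \emptyset$ any line not through $A$ and different from $\ell$ works; when $T = \{B_0\}$ any line through $B_0$ different from the line $AB_0$ and from $\ell$ works.

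Now set $p := \bigl(\ell'(A) q(A)\bigr)^{-1} \ell' q \in \Pi_n$; then $p(A) = 1$ by construction. For any $B \in \Xset \setminus \{A\}$, the relation $\ell(B) q(B) = p_A^\star(B) = 0$ leaves two possibilities: either $q(B) = 0$, giving $p(B) = 0$ directly; or $\ell(B) = 0$ with $q(B) \neq 0$, so $B \in T$ and $\ell'(B) = 0$ by (ii), again yielding $p(B) = 0$. Thus $p$ is a fundamental polynomial for $A$, and uniqueness of such polynomials in an $n$-correct set (an immediate consequence of Proposition \ref{correctii}) forces $p = p_A^\star = \ell q$. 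Cancelling the nonzero polynomial $q$ in the integral domain of polynomials then shows $\ell'$ is a scalar multiple of $\ell$, contradicting (iii).

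I expect the argument to be essentially mechanical, with no substantial obstacle. The only point requiring a moment's care is the existence of the auxiliary line $\ell'$, which is immediate once $|T| \leq 1$ because only finitely many lines are excluded from the infinite pencil of candidates.
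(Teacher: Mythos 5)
Your proof is correct and follows exactly the idea the paper invokes (it gives no written proof beyond the remark that the fundamental polynomial in an $n$-correct set is unique): if at most one node of $\ell\cap\Xset$ had $q\neq 0$, replacing $\ell$ by another line through that node would produce a second fundamental polynomial for $A$, contradicting uniqueness. Your write-up just makes this standard argument explicit, including the cancellation of $q$ at the end.
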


\begin{definition} \label{def:Nll}
For a given set of lines $\ell_1,\dots,\ell_k$\,, we define
$\Nset_{\ell_1,\dots,\ell_k}$
to be the set of those nodes in $\Xset$ which do not lie in any of the
lines $\ell_i$, and for which at least one of the lines is not used.
\end{definition}
\noindent In the case of one line $\ell$ we have
$$ \Nset_\ell = \left\{ A\in\Xset : A \notin \ell, \text{ and $A$ is
  not using $\ell$}\right\}.$$
\begin{proposition}[\cite{HJZ09b}, Thm. 3.2]\label{prp:kell}
Assume that $\Xset$ is a $GC_n$ set, and
$\ell_1,\dots,\ell_k$ are lines. Then the following hold for $\Nset =
\Nset_{\ell_1,\dots,\ell_k}$\,.
\vspace{-2mm}
\begin{enumerate}
\setlength{\itemsep}{0mm}
\item
If $\, \Nset$ is nonempty, then it is essentially   $(n-k)$-dependent.
\item
$\Nset = \emptyset$ if and only if the product $\ell_1\cdots\ell_k$ is a maximal curve.
\end{enumerate}
\end{proposition}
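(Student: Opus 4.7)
My plan is to handle (i) by contradiction and (ii) by a direct argument in both directions, leaning on the $GC_n$ hypothesis and on the characterization of maximal curves (Proposition \ref{maxcor} and the equivalence (\ref{maxmax})).

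For (i), set $\mu:=\ell_1\cdots\ell_k$ and suppose toward contradiction that some $A\in\Nset$ admits an $(n-k)$-fundamental polynomial $p$ in $\Nset$. The natural lift $P:=\mu p\in\Pi_n$ vanishes on every node of $\Xset$ lying in $\bigcup\ell_i$ and on $\Nset\setminus\{A\}$, while $P(A)\ne 0$ since $A$ is off every $\ell_i$. The only nodes at which $P$ may fail to vanish form the set $G$ of \emph{good} nodes, \ie, nodes $B\in\Xset$ that lie off $\bigcup\ell_i$ yet use every $\ell_i$. For each such $B$, the $GC_n$ property forces $p_B^\star=\mu\,r_B$ with $r_B\in\Pi_{n-k}$, so the corrected polynomial
$$\tilde P:=P-\sum_{B\in G}P(B)\,p_B^\star$$
is still divisible by $\mu$. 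It vanishes on $\Xset\setminus\{A\}$ and satisfies $\tilde P(A)=P(A)\ne 0$, so by $n$-correctness $\tilde P=P(A)\,p_A^\star$. Dividing by $\mu$ exhibits $p_A^\star$ as $\mu\cdot q$ with $q\in\Pi_{n-k}$, \ie, $A$ uses every $\ell_i$, contradicting $A\in\Nset$.

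For the ``if'' direction of (ii), the equivalence (\ref{maxmax}) says that when $\mu$ is maximal the set $\Xset\setminus\mu$ is a $GC_{n-k}$ set. Every $A\in\Xset\setminus\mu$ then admits a fundamental polynomial $q_A\in\Pi_{n-k}$ in $\Xset\setminus\mu$, and $\mu q_A$ vanishes on $\Xset\setminus\{A\}$ while being non-zero at $A$, so it is a scalar multiple of $p_A^\star$; hence $A$ uses every $\ell_i$ and $A\notin\Nset$, giving $\Nset=\emptyset$. For the ``only if'' direction, assume $\Nset=\emptyset$. Then each $A\in\Xset\setminus\mu$ has $p_A^\star=\mu r_A$ with $r_A\in\Pi_{n-k}$, and the restriction of $r_A$ to $\Xset\setminus\mu$ is an $(n-k)$-fundamental polynomial for $A$ there; thus $\Xset\setminus\mu$ is $(n-k)$-independent, so $\#(\Xset\setminus\mu)\le N_{n-k}$ and hence $\#(\Xset\cap\mu)\ge d(n,k)$. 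Combined with the matching upper bound from Proposition \ref{maxcurve}(i), we get equality, so $\mu$ is maximal by Definition \ref{def:maximal}.

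The only delicate point is the correction step in (i): without the $GC_n$ hypothesis forcing each $p_B^\star$ ($B\in G$) to be divisible by $\mu$, the subtraction would destroy divisibility by $\mu$, and we could not conclude that $p_A^\star$ is a multiple of $\mu$. The rest of the argument is essentially a counting of fundamental polynomials across the maximal curve $\mu$.
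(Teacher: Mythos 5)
The paper does not prove this proposition; it is quoted from \cite{HJZ09b}, Thm.~3.2, so there is no in-paper argument to compare against. Your proof is correct and is essentially the standard argument for this result: in (i) the key correction step $\tilde P:=P-\sum_{B\in G}P(B)\,p_B^\star$ works precisely because the $GC_n$ hypothesis makes each $p_B^\star$ divisible by $\mu$ for the ``good'' nodes $B$, and in (ii) the two directions follow from Proposition~\ref{maxcor} (or \eqref{maxmax}) and from the count $\#(\Xset\cap\mu)\ge d(n,k)$ matched against Proposition~\ref{maxcurve}(i). The only point worth making explicit is that the lines $\ell_1,\dots,\ell_k$ must be pairwise distinct: this is needed both for ``$B$ uses every $\ell_i$'' to imply $\mu\mid p_B^\star$ (distinct irreducible factors) and for $\mu$ to have no multiple components when you invoke Proposition~\ref{maxcurve}(i).
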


\noindent For $k=1$ this result has been proved by Carnicer and Gasca \cite{CG01}.

Assume that $\Xset_i$ is a set of $k_i$ collinear points:
$$\Xset_i\subset \ell_i,\quad \#\Xset=k_i,\ i=1,2,3,\quad \ell_i \hbox{ is a line}.$$
Assume also that non of the points is an intersection point of the lines.

 Consider the set
$\Lset_{\Xset_1,\Xset_2,\Xset_3}$ of lines containing one point from
each of $\Xset_i$ $i=1,2,3,$ and denote by
$M_{k_1,k_2,k_3}$ the maximal possible number of such lines.

We shall need the following estimate (see \cite{HJZ09b}, \cite{HJZ14})
\begin{equation}\label{M332}M_{3,3,2}=5.\end{equation}


\subsection{The m-distribution sequence of a node}

In this section
we bring a number of concepts  from  \cite{HJZ14}, Section 2.

Suppose that $\mathcal X$ is a $GC_n$ set. Consider a node $A\in\Xset$ together with the set of $n$ used lines denoted by $\Lset_A.$
The $N-1$ nodes of $\Xset\setminus\{A\}$ belong to the lines of $\Lset_A.$
Let us order the lines of $\Lset_A$ in the following way:

The line $\ell_1$ is a line in $\Lset_A$ that passes through maximal number of nodes of $\Xset,$ denoted by $k_1:$  $\Xset\cap\ell_1=k_1.$
The line $\ell_2$ is a line in $\Lset_A$ that passes through maximal number of nodes of $\Xset\setminus\ell_1,$ denoted by $k_2:$  $(\Xset\setminus\ell_1)\cap\ell_2=k_2.$

In the general case the line $\ell_s,\ s=1,\ldots,n,$ is a line in $\Lset_A$ that passes through maximal number of nodes of the set $\Xset\setminus\cup_{i=1}^{s-1}\ell_i,$ denoted by $k_s:$  $(\Xset\setminus\cup_{i=1}^{s-1}\ell_i)\cap\ell_s=k_s.$
A correspondingly ordered  line sequence $$\Sset=(\ell_1,\ldots,\ell_n)$$
 is called  a \emph{maximal line sequence} or briefly an \emph{m-line sequence}  if the respective sequence $(k_1,\ldots,k_n)$ is the maximal in the lexicographic order \cite{HJZ14}. Then the latter sequence is called a \emph{maximal distribution sequence} or briefly an \emph{m-d sequence.}
Evidently, for the m-d sequence we have that
\begin{equation}\label{nor0}k_1 \geq k_2 \geq \cdots \geq k_n\ \hbox{and}\  k_1 + \cdots + k_n = N-1.\end{equation}

Though the m-distribution sequence for a node $A$ is unique, it may correspond to
several m-line sequences.

An intersection point of several lines of $\Lset_A$
is counted for the line containing it which appears in $\Sset$ first.
A node in $\Xset$ is called  \emph{primary} for the line it
is counted for, and  \emph{secondary} for the other lines
containing it.

\noindent According to Lemma~\ref{lm},
a used line contains at least two primary nodes:
\begin{equation}\label{eq:kgeq2}
  k_i \geq 2 \quad\text{for } i=1,\ldots,n \,.
\end{equation}

\noindent Let  $(\ell_1,\ldots,\ell_k)$ be a line sequence.
\begin{definition}
We say that a
polynomial has $(s_1,\ldots,s_k)$ \emph{primary zeroes} in the lines
$(\ell_1,\ldots,\ell_k)$ if the counted zeroes are primary nodes in the
respective lines.
\end{definition}
From Proposition \ref{prp:n+1ell} we get
\begin{corollary}\label{cor22} If $p\in\Pi_{m-1}$ has $(m,m-1,\ldots,m-k+1)$ primary zeroes in the lines
$(\ell_1,\ldots,\ell_{k})$ then we have that
$
p = \ell_1\cdots\ell_{k}  r  ,\ \text{where} \ r\in\Pi_{m-k-1}.
$
\end{corollary}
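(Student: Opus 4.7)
The plan is straightforward induction on $k$, peeling off the linear factors one at a time by repeated application of Proposition \ref{prp:n+1ell}.

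First I would observe that $p\in\Pi_{m-1}$ vanishes at $m$ primary nodes on $\ell_1$, and these are in particular $m$ distinct points of $\ell_1$. Since $m=(m-1)+1$, Proposition \ref{prp:n+1ell} applies and gives $p=\ell_1 q_1$ with $q_1\in\Pi_{m-2}$.

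Next, I would set up the inductive step. Assume we have already shown $p=\ell_1\cdots\ell_{j-1}\,q_{j-1}$ with $q_{j-1}\in\Pi_{m-j}$ for some $1\le j\le k$. By definition, the $m-j+1$ primary zeroes of $p$ in $\ell_j$ lie on $\ell_j$ but \emph{not} on any earlier line $\ell_1,\dots,\ell_{j-1}$ (this is exactly the content of the "counted for the first line containing it" convention recalled just before Corollary \ref{cor22}). Therefore at each of these $m-j+1$ points the factor $\ell_1\cdots\ell_{j-1}$ is nonzero, so $q_{j-1}$ must itself vanish at all $m-j+1$ of them. Since $q_{j-1}\in\Pi_{m-j}$ and $m-j+1=(m-j)+1$, Proposition \ref{prp:n+1ell} yields $q_{j-1}=\ell_j q_j$ with $q_j\in\Pi_{m-j-1}$.

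Iterating this step from $j=1$ up to $j=k$ produces $p=\ell_1\cdots\ell_k r$ with $r:=q_k\in\Pi_{m-k-1}$, as required. There is no real obstacle here; the only point that needs a moment of care is the bookkeeping observation that a "primary zero on $\ell_j$" is by convention disjoint from $\ell_1\cup\cdots\cup\ell_{j-1}$, which is precisely what allows us to pass the vanishing from $p$ to the quotient $q_{j-1}$ and keep the induction going.
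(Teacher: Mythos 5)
Your proof is correct and is exactly the argument the paper intends: the corollary is stated as an immediate consequence of Proposition \ref{prp:n+1ell} (no proof is written out), obtained by iteratively factoring out one line at a time and using that primary zeroes of $\ell_j$ avoid $\ell_1,\dots,\ell_{j-1}$. Your bookkeeping of the degrees and of the primary-node convention matches the paper's setup.
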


In some cases a particular line $\wti\ell$ used by a node is fixed and then
the properties of the other factors of the fundamental polynomial are studied.

In this case in the corresponding m-line sequence, called $\wti\ell$-\emph{m-line sequence}, one takes as the first line $\ell_1$ the line $\wti\ell,$ no matter through how many nodes it passes. Then the second and subsequent lines are chosen, as in the case of the  m-line sequence.
Thus the line $\ell_2$ is a line in $\Lset_A\setminus \{\wti\ell_1\}$ that passes through maximal number of nodes of $\Xset\setminus\wti\ell_1,$ and so on.

Correspondingly the   $\wti\ell$-\emph{m-distribution  sequence} is defined.

\section{The Gasca-Maeztu conjecture for $n=6$}

Now let us formulate the Gasca-Maeztu conjecture for
$n=6$ as:

\begin{theorem}\label{th:main}
Any $GC_6$ set contains seven collinear nodes.
\end{theorem}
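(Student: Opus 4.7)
The plan is to argue by contradiction: suppose $\Xset$ is a $GC_6$ set with no $7$ collinear nodes. Since $\#\Xset=N_6=28$, for every node $A\in\Xset$ the m-distribution sequence $(k_1,\ldots,k_6)$ satisfies $k_1+\cdots+k_6=27$, with $k_1\ge\cdots\ge k_6\ge 2$ by \eqref{nor0} and \eqref{eq:kgeq2} and $k_i\le 6$ by our standing assumption. Averaging forces $k_1\in\{5,6\}$, so the natural case split is whether $\Xset$ admits a node with $k_1=6$ (the content of the paper's Proposition \ref{thm:main3}) or every node has $k_1=5$.

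For the $k_1=6$ case, fix a node $A$ using a $6$-node line $\ell$, so that $p_A^\star=\ell q$ with $q\in\Pi_5$ vanishing on the $28-6-1=21$ nodes of $\Xset\setminus(\ell\cup\{A\})$. Since $\ell$ is not a maximal line, Proposition \ref{prp:kell} gives that $\Nset_\ell$ is nonempty and essentially $5$-dependent, which I would analyse via Theorem \ref{th:3n} (or a suitable extension when $\#\Nset_\ell>15$). The three alternatives of Theorem \ref{th:3n} yield respectively $7$ collinear nodes in $\Nset_\ell$ (the desired contradiction), a maximal conic (lifted via Proposition \ref{maxcor} and then peeled off through \eqref{maxmax} to reduce to the $n=4$ case), or a configuration $\gamma\cap\sigma$ with $\gamma$ cubic and $\sigma$ quintic, to which Theorems \ref{th:-1+1} and \ref{th:-2+3} apply in order to force $\gamma$ to be a maximal cubic of $\Xset$ and reduce again via \eqref{maxmax}.

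For the $k_1=5$ case, the non-increasing admissible sequences reduce to just $(5,5,5,5,5,2)$, $(5,5,5,5,4,3)$, and $(5,5,5,4,4,4)$. For a fixed $A$ realising one of these I would use Corollary \ref{cor22} to peel off initial linear factors of $p_A^\star$ whenever the primary-zero counts along the first few m-lines permit, leaving an essentially $(n-k)$-dependent residual controlled by a further application of Theorem \ref{th:3n}; the bound $M_{3,3,2}=5$ from \eqref{M332} disposes of the most symmetric subcase $(5,5,5,4,4,4)$ by controlling the auxiliary lines joining primary nodes across the initial triples.

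The main obstacle is the $k_1=5$ branch, which is precisely why the paper stops at Proposition \ref{thm:main3}. Even inside the $k_1=6$ case the cubic--quintic alternative of Theorem \ref{th:3n} is the delicate subcase: one must verify that $\gamma$ lifts to a maximal curve of all of $\Xset$, combining Proposition \ref{maxcor} with the tight count $\dim\Pi_5=21$ noted above. For $k_1=5$ there is no comparable large pivot line, so several medium-size lines must be controlled simultaneously at each node, paralleling but significantly exceeding the difficulty of the $n=5$ arguments of \cite{HJZ14} and \cite{GV2}.
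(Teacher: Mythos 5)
The paper does not actually prove Theorem~\ref{th:main}: it only establishes Proposition~\ref{thm:main3}, i.e.\ that under the no-maximal-line assumption no node can have the single m-d sequence $(6,6,6,4,3,2)$. So there is no complete proof in the paper to compare against, and your proposal, by your own admission, is likewise an outline rather than a proof. Beyond the openly conceded $k_1=5$ branch, several of the steps you do sketch would fail as stated.

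First, you equate the $k_1=6$ case with ``the content of the paper's Proposition~\ref{thm:main3},'' but that proposition eliminates only one of the nine admissible m-d sequences beginning with $6$ (cases (i)--(ix) of the paper's list); the other eight are untouched, so even granting everything in your second paragraph the $k_1=6$ branch remains open. Second, your plan to analyse $\Nset_\ell$ via Theorem~\ref{th:3n} does not go through: for a $6$-node line $\ell$ used by a single node, $\#\Nset_\ell$ can be as large as $28-6-1=21$, whereas Theorem~\ref{th:3n} characterises $n$-dependence only for sets of at most $3n=15$ nodes (for $5$-dependence); the ``suitable extension'' you invoke does not exist in the paper and is itself a substantial open problem --- this is precisely why the paper's Proposition~\ref{prp:6810} needs the line to be used by at least eight nodes before Corollary~\ref{cor:3n-1} becomes applicable. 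Third, the reduction ``via \eqref{maxmax} to the $n=4$ case'' after finding a maximal conic only produces a maximal line of the residual $GC_4$ set, i.e.\ five collinear nodes of $\Xset$, not seven; upgrading maximal lines of the residual set to maximal lines of $\Xset$ is exactly the hard inductive step in all known GM arguments and cannot be waved through. Finally, in the cubic--quintic alternative of Theorem~\ref{th:3n}(iii) the cubic $\gamma$ need not be a curve of the required maximality with respect to $\Xset$, and Theorems~\ref{th:-1+1} and \ref{th:-2+3} impose node-count hypotheses ($d(n,k-i)+i$, resp.\ $d(n,k-2)+3$) that your configuration does not obviously satisfy. In short, the proposal is a plausible roadmap but contains no step that closes a case the paper leaves open, and the quantitative hypotheses of the cited tools are not met where you invoke them.
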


To make a step for the proof assume by way of contradiction:

\noindent {\bf Assumption.} {\it The set $\Xset$ is a $GC_6$ set without a maximal line.}

In view of  \eqref{nor0} and
\eqref{eq:kgeq2} the only
possible m-distribution sequences for any node $A\in\Xset$ in the case $n=6$ with $N=28$ are
\begin{equation*}\label{eq:12cases}
\begin{matrix}
\text{(i)}\quad & (6,6,6,4,3,2) &\text{(ii)}\quad&(6,6,5,5,3,2) &\text{(iii)}\quad(6,6,5,4,4,2)\\ \text{(iv)}\quad &(6,6,5,4,3,3)
&\text{(v)}\quad &(6,6,4,4,4,3) &\text{(vi)}\quad(6,5,5,5,4,2)\\ \text{(vii)}\quad &(6,5,5,5,3,3)
&\text{(viii)}\quad &(6,5,5,4,4,3) &\text{(ix)}\quad(6,5,4,4,4,4)\\ \text{(x)}\quad &(5,5,5,5,5,2)
&\text{(xi)}\quad &(5,5,5,5,4,3) &\text{(xii)}\quad(5,5,5,4,4,4).
\end{matrix}\end{equation*}
Here we omitted the distribution sequences $(6,6,6,5,2,2)$ and $(6,6,6,3,3,3).$ The reason is that $\ell_1\ell_2\ell_3$ is a maximal cubic with $18\  (=6+6+6)$ nodes and, in view of \eqref{maxmax}, three $6$ must be followed by $4,3,2,$ as in above $(i).$

\section{Lines used several times}

\subsubsection*{A $2$-node line shared}

Consider a $2$-node line $\wti\ell.$ For the $\wti\ell$-m-distribution sequence of a node $A\notin \wti\ell$
there are only the following five possibilities:
\begin{equation}\label{eq:5cases}
\begin{matrix}
\text{(i)}    \  &(\wti 2,6,6,6,4,3) &\text{(ii)}   \   &(\wti 2,6,6,5,5,3) &\text{(iii)}  \   &(\wti 2,6,6,5,4,4)\\
\text{(vi)}   \   &(\wti 2,6,5,5,5,4)  &\text{(x)}    &(\wti 2,5,5,5,5,5).
\end{matrix}
\end{equation}
Note that in $\wti\ell$-m-d sequences, we use the tilde to indicate the place of $\wti\ell.$

It was proved in \cite{CG02}, Prop. 4.2, that any $2$-node line in a
$GC_n$ set $\Xset$ can be used at most by one node from $\Xset$.
This yields the following
\begin{proposition}\label{prp:2nl}
Assume that $\Xset$ is a $GC_6$-set, and suppose
that $\wti\ell$ is a $2$-node line.
Then $\wti\ell$ can be used by at most one node $A\in\Xset.$ The
m-d sequence of $A$ has to be one of $\text{(i), (ii), (iii), (vi),}$ and (x), presented in \eqref{eq:5cases}.
\end{proposition}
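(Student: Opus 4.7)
My plan is to split the statement into its two claims and attack them separately. The first claim — that at most one node of $\Xset$ uses $\wti\ell$ — is exactly the conclusion of \cite{CG02}, Prop.~4.2, quoted in the sentence immediately preceding the proposition, so I would simply invoke it. The real content is to restrict the $\wti\ell$-m-distribution sequence of a (by then unique) using node $A$ to one of (i), (ii), (iii), (vi), (x) listed in \eqref{eq:5cases}.

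The first step is a purely numerical enumeration. Write the $\wti\ell$-m-d sequence of $A$ as $(\wti 2, k_2, k_3, k_4, k_5, k_6)$. The constraints are: $k_2 \geq k_3 \geq k_4 \geq k_5 \geq k_6$ by construction of the $\wti\ell$-m-d sequence; $k_i \geq 2$ for $i \geq 2$, by \eqref{eq:kgeq2} (whose justification via Lemma \ref{lm} carries over verbatim); $k_i \leq 6$, because the standing Assumption excludes $7$-node (maximal) lines; and $k_1 + \cdots + k_6 = N - 1 = 27$, forcing $k_2 + \cdots + k_6 = 25$. A short inspection of weakly decreasing $5$-tuples in $\{2,3,4,5,6\}^5$ summing to $25$ yields exactly six possibilities: the five sequences listed in \eqref{eq:5cases}, together with one extra sequence $(\wti 2, 6, 6, 6, 5, 2)$.

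It therefore remains to exclude the tuple $(\wti 2, 6, 6, 6, 5, 2)$, and for this I would recycle the maximal-cubic argument the paper has already used, one page earlier, to drop $(6, 6, 6, 5, 2, 2)$ and $(6, 6, 6, 3, 3, 3)$ from the list of twelve m-d sequences. In the situation at hand the lines $\ell_2, \ell_3, \ell_4$ contribute six primary nodes each, pairwise disjoint by definition of primary, so the cubic $\ell_2 \ell_3 \ell_4$ passes through $18 = d(6, 3)$ nodes of $\Xset$ and is thus a maximal curve. By \eqref{maxmax}, the residual $\Xset' := \Xset \setminus (\ell_2 \cup \ell_3 \cup \ell_4)$ is a $GC_3$ set of $10$ nodes. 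However, the five primary nodes of $\ell_5$ lie outside $\ell_2 \cup \ell_3 \cup \ell_4$, hence inside $\Xset'$, and they are collinear on $\ell_5$. This produces five collinear nodes inside a $GC_3$ set, which Proposition \ref{prp:n+1ell} applied with $n = 3$ forbids: the $\Pi_3$-fundamental polynomial of a hypothetical fifth collinear node would vanish on the other four collinear points, hence factor through their line and vanish at the fifth as well.

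The only place where I see a real risk of a slip is the enumeration in the second paragraph — missing a tuple or over-counting one; this is easily verified by hand. Once the extra tuple $(\wti 2, 6, 6, 6, 5, 2)$ is isolated, the exclusion is a mechanical reuse of the maximal-cubic/residual-$GC_3$ template already worked out by the authors for the analogous sequences in the list of twelve.
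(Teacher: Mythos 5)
Your proposal is correct and follows essentially the paper's own route: the paper likewise gets the "at most one node" claim from \cite{CG02}, Prop.~4.2, and obtains the five sequences of \eqref{eq:5cases} by selecting from its master list of twelve m-d sequences, where the extra tuple $(6,6,6,5,2,2)$ had already been discarded by the very same maximal-cubic/residual-$GC_3$ argument you recycle. Your enumeration (six weakly decreasing tuples, one to exclude) and the exclusion via five collinear nodes in a $GC_3$ set are both sound.
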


\subsubsection*{A $3$-node line shared}

Then, consider a $3$-node line $\wti\ell.$ For the $\wti\ell$-m-d sequence of a node $A\notin \wti\ell$ there are only the following possibilities:

\begin{equation*}\label{eq:7cases}
  \begin{matrix}
  \text{(i)}   &  (\wti 3,6,6,6,4,2)
  &\text{(ii)}  & (\wti 3,6,6,5,5,2)
    &\text{(iv)}  & (\wti 3,6,6,5,4,3)\\
       \text{(v)}   &  (\wti 3,6,6,4,4,4)
      &\text{(vii)}  & (\wti 3,6,5,5,5,3)
         &\text{(viii)} & (\wti 3,6,5,5,4,4)\\
   \text{(xi)} &\ (\wti 3,5,5,5,5,4).
  \end{matrix}
\end{equation*}

Here, and in all subsequent cases, denote
a respective $\wti\ell$-m-line sequence by
$(\wti\ell,\ell_2,\ldots,\ell_6).$ Denote also by $\ell_{AB}$ the line through the nodes $A$ and $B.$

Suppose that the line $\wti\ell$ is used by two
nodes $A$, $B\in \Xset$:
\begin{equation*}\label{eq:AB}
  p_A^\star = \wti\ell\,q_1
  \quad\text{and}\quad
  p_B^\star = \wti\ell\,q_2\,,\quad q_i\in \Pi_5 \,.
\end{equation*}

Then we have that the curves: $q_1,q_2\in\Pi_5,$ pass through  $6$-independent nodes of the set $\Yset:=\Xset\setminus (\wti\ell \cup \{A,B\}),\ \#\Yset=28-(3+2)=23.$

Note that $23=d(6,5-1)+1=d(6,4)+1=7+6+5+4+1.$

Therefore, in view of Theorem \ref{th:-1+1}, case i=1, we get that
all the nodes of $\Yset$ but one, denoted by $C,$  belong to a maximal curve $\mu$ of degree $4.$
Note that $ p_C^\star =\wti\ell\mu_4 \ell_{AB},$ meaning that the node $C$ uses $\wti\ell$ too.

Since $\Xset$ is a $GC$ set we conclude that $\mu$ has $4$ line-components coinciding with $\ell_2,\ldots,\ell_5$.
It is easily seen
 that these four lines have
$6,\ 6,\ 6,\ 4$ or $6,\ 6,\ 5,\ 5,$
nodes, respectively.
 For $D=A,B,C,$ we have that
 \begin{equation}\label{eq:ABC3}
  p_D^\star =\wti\ell\,\ell_2\cdots\ell_6,
\end{equation}
where $\ell_6$ is a line depending on $D$ with two primary nodes.

Thus the $\wti\ell$-m-d sequence indicated in \eqref{eq:ABC3} may correspond only to the m-d sequences (i) $(6,6,6,4,3,2)$  and (ii) $(6,6,5,5,3,2).$

Note that all the $6$  nodes in $\Xset\setminus \mu,$ included $C,$ share the $4$ line-components of $\mu.$ As it is proved in \cite{HakTor}, Corollary 6.1, no node in  $\mu$ uses the line $\wti\ell.$

Thus we have shown the following:

\begin{proposition}\label{prp:3nl}
Assume that $\Xset$ is a $GC_6$-set without a maximal line, and suppose
that a $3$-node line $\wti\ell$ is used by two nodes $A$, $B\in\Xset$. Then
there exists a
third node $C$ using $\wti\ell$ and $\wti\ell$ is used by exactly three nodes of $\Xset.$

Moreover, $A$, $B$, and $C,$
share four lines with either  $6,6,6,4,$ or $6,6,5,5,$ primary nodes, respectively.
Furthermore,
the m-d sequence of these three nodes is either $(6,6,6,4,\wti 3,2)$, or $(6,6,5,5,\wti 3,2)$, respectively.
\end{proposition}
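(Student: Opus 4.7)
The plan is to exploit the fact that two fundamental polynomials already share $\wti\ell$ as a factor to force the existence of a third such factorization. Writing $p_A^\star = \wti\ell\, q_1$ and $p_B^\star = \wti\ell\, q_2$ with $q_1, q_2 \in \Pi_5$, both quintics vanish on the $6$-independent set $\Yset := \Xset \setminus (\wti\ell \cup \{A,B\})$ of size $28 - 3 - 2 = 23$, and they are linearly independent because $p_A^\star, p_B^\star$ are. The key arithmetic identity $23 = d(6,4) + 1$ places us in Theorem \ref{th:-1+1} with $i = 1$, $k = 5$, and $q_1, q_2$ already realize the maximum $2i = 2$ curves of degree $\le 5$; hence all but one node $C$ of $\Yset$ must lie on a maximal curve $\mu \in \Pi_4$.

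Next I would exhibit $p_C^\star$ as a scalar multiple of $\wti\ell\,\mu\,\ell_{AB}$. This sextic vanishes on $\Xset \setminus \{C\}$ (on $\wti\ell$ via the first factor, on $A, B$ via $\ell_{AB}$, on $\Yset \setminus \{C\}$ via $\mu$), and it does not vanish at $C$ because $C \notin \wti\ell \cup \mu$ by construction and $C \notin \ell_{AB}$ --- otherwise $A \in \ell_{BC}$, which would make $p_A^\star(A) = 0$. Uniqueness of the fundamental gives the factorization, and in particular $C$ uses $\wti\ell$. Applying Proposition \ref{maxcor} to $q_1$ in turn yields $q_1 = \mu\,\ell_{BC}$, and symmetrically $p_B^\star = \wti\ell\,\mu\,\ell_{AC}$; in particular the $GC_6$ property splits $\mu$ into four lines $\ell_2\ell_3\ell_4\ell_5$ shared by the fundamental polynomials of $A, B, C$.

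A short counting argument pins down the m-d sequence: the four lines of $\mu$ carry the $22 = d(6,4)$ nodes of $\Yset \setminus \{C\}$ as primary nodes, $\wti\ell$ carries $3$, and the trailing $2$-node line carries $2$, summing to $N - 1 = 27$. Because $\Xset$ has no maximal line, each $k_i \le 6$, and the only nonincreasing partitions of $22$ into four parts in $[2,6]$ are $(6,6,6,4)$ and $(6,6,5,5)$, yielding the two announced sequences. For the exactness claim, a node $P \in \wti\ell$ cannot use $\wti\ell$ (since $\wti\ell(P) = 0$ while $p_P^\star(P) = 1$), and by \cite{HakTor}, Corollary~6.1, no node on $\mu$ uses $\wti\ell$ either; hence exactly the three nodes $A, B, C$ are users.

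The main obstacle I anticipate is isolating the exceptional node $C$ from Theorem \ref{th:-1+1} and verifying that the candidate $\wti\ell\,\mu\,\ell_{AB}$ is genuinely the fundamental polynomial of $C$. In particular the noncollinearity of $A, B, C$ is essential (to ensure $C \notin \ell_{AB}$), and although it follows quickly by evaluating $p_A^\star = \wti\ell\,\mu\,\ell_{BC}$ at $A$, this small combinatorial check is the hidden lever of the whole argument.
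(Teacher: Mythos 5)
Your proposal is correct and follows essentially the same route as the paper: the same reduction to Theorem~\ref{th:-1+1} with $i=1$, $k=5$ applied to the $23$-node set $\Yset$, the identification $p_C^\star=\wti\ell\,\mu\,\ell_{AB}$, the splitting of the maximal quartic into four shared line-components via the $GC$ property, and the appeal to \cite{HakTor}, Corollary~6.1, for the exactness claim. You merely spell out two details the paper leaves implicit, namely the noncollinearity of $A$, $B$, $C$ and the partition count $22=6+6+6+4=6+6+5+5$.
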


\subsubsection*{A $4$-node line shared}

Now, consider a $4$-node line $\wti\ell.$
$\Xset$. The $\wti\ell$-m-d sequence of $A\notin \wti\ell$ has to be one of the following:
\begin{equation*}\label{eq:9cases}
  \begin{matrix}
  \text{(i)}   &  (\wti 4,6,6,6,3,2)
  & \text{(iii)} &  (\wti 4,6,6,5,4,2)
  &\text{(iv)}  &  (\wti 4,6,6,5,3,3)\\
   \text{(v)}   &  (\wti 4,6,6,4,4,3)
    &\text{(vi)}  &  (\wti 4,6,5,5,5,2)
      &\text{(viii)} &  (\wti 4,6,5,5,4,3)\\
     \text{(ix)}  &  (\wti 4,6,5,4,4,4)
  &\text{(xi)} &  (\wti 4,5,5,5,5,3)
   &\text{(xii)}   &  (\wti 4,5,5,5,4,4).
  \end{matrix}
\end{equation*}
Assume that the nodes $A,B,C\in\Xset$ use the line $\wti\ell.$
Then we have three curves: $p_A^\star,\ p_B^\star,\  p_C^\star,$ passing through $21=28-(4+3)$ $6$-independent nodes of the set $\Yset:=\Xset\setminus (\wti\ell \cup \{A,B,C\}).$

Note that $21=d(6,5-2)+3=d(6,3)+3=7+6+5+3.$

This, in view of Theorem \ref{th:-2+3}, implies that either

\noindent (a) all the nodes of $\Yset$ but three, i.e., $18$ nodes, belong to a maximal curve $\mu$ of degree $3,$ or

\noindent (b) all the nodes of $\Yset,$ i.e., $21$ nodes, belong to a curve $q$ of degree $4.$

Since any node outside of $\mu$ uses it we get that $\mu$ has $3$ line-components, passing through
$6+6+6$ nodes, respectively.

Concerning (b) note that $\wti\ell q$ is a maximal curve of degree $4$ and any node $D=A,B,C,$ uses $q:$
 \begin{equation*}\label{eq:ABC4}
  p_D^\star =\wti\ell q\ell_6,
\end{equation*}
where $\ell_6$ is a line depending on $D$ with two primary nodes.

Hence $q$ has $4$ line-components.
 It is easily seen that these four lines have either
$6+6+6+3,\ 6+6+5+4,$ or $6+5+5+5$
nodes, correspondingly. We readily get also that these lines coincide with the lines $\ell_2,\ldots,\ell_5,$ of the corresponding $\wti\ell$-m-distribution \eqref{eq:ABC3}.
Hence, these three cases may correspond only to the above cases $(i)$ and $(iii)$ and $(vi).$

Now suppose that except of $A,B,C,$ another node $D\in\Xset$ uses $\wti\ell.$
Then we have four curves passing through $20=28-(4+4)$ $6$-independent nodes.
We have that $20=d(6,5-2)+2=d(6,3)+2=7+6+5+2.$

Therefore, in view of Theorem \ref{th:-1+1}, case i=2, we obtain that all the nodes of $\Xset\setminus \{A,B,C,D\}$ but two, i.e., $18$ nodes belong to a maximal curve $\mu$ of degree $3.$ As was stated above this maximal curve has $3$ line-components with
$6+6+6$ nodes, correspondingly. We readily get also that these lines coincide with the lines $\ell_2,\ell_3,\ell_4.$
Consequently, this case may correspond only to the above case $(i).$  As it is proved in    \cite{HK}, Corollary, no node in  $\mu$ uses the line $\wti\ell.$

By summarizing we obtain the following
\begin{proposition}\label{prp:4nl3}
Assume that $\Xset$ is a $GC_6$-set without a maximal line, and suppose
that a $4$-node line $\wti\ell$ is used by three nodes $A$, $B$,
$C\in\Xset$. Then, $A$, $B$, and $C,$ besides $\wti\ell,$
share four lines with either  $6,6,6,3;\ $ $6,6,5,4;$ or $6,5,5,5,$ primary nodes, respectively.

Moreover the m-d sequence for $A$, $B$,
$C,$ is $(6,6,6,4,\wti 3,2),\ (6,6,5,5,\wti 3,2),\\ (6,6,5,4,\wti 4,2),$ or $(6,5,5,5,\wti 4,2) .$
\end{proposition}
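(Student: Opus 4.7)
The plan is to mirror the proof of Proposition~\ref{prp:3nl}, now applied to three fundamental polynomials simultaneously, with Theorem~\ref{th:-2+3} replacing Theorem~\ref{th:-1+1}. Writing $p_D^\star = \wti\ell\,q_D$ with $q_D \in \Pi_5$ for $D \in \{A, B, C\}$, the three quintics all vanish on the $6$-independent set $\Yset := \Xset \setminus (\wti\ell \cup \{A, B, C\})$ of cardinality $28 - 4 - 3 = 21 = d(6,3) + 3$. Theorem~\ref{th:-2+3} with $k = 5$ then yields two alternatives: \textbf{(a)}~$18$ nodes of $\Yset$ lie on a maximal cubic $\mu$, with three leftover nodes $E_1, E_2, E_3 \in \Yset \setminus \mu$; or \textbf{(b)}~all $21$ nodes of $\Yset$ lie on a quartic $q$.

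In case~(a), relation~\eqref{maxmax} and the no-maximal-line hypothesis force $\mu = \alpha_1\alpha_2\alpha_3$ with each $\alpha_i$ carrying exactly $6$ nodes; Proposition~\ref{maxcurve}(i) yields $|\mu \cap \Xset| = 18 \subseteq \Yset$, so no node of $\wti\ell$ lies on $\mu$. Proposition~\ref{maxcor} combined with the irreducibility of $\wti\ell$ gives $p_D^\star = \wti\ell\,\mu\,\gamma_D$ with $\gamma_D \in \Pi_2$ splitting into two lines. Now $\gamma_A$ must vanish at the five points $\{B, C, E_1, E_2, E_3\}$; reducibility forces three of them to be collinear, and by running the symmetric analysis for $\gamma_B$ and $\gamma_C$ and checking compatibility, the unique collinear triple common to all three conics is $\{E_1, E_2, E_3\}$. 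Writing $\ell_E$ for the resulting line, we obtain $\gamma_A = \ell_E\ell_{BC}$, $\gamma_B = \ell_E\ell_{AC}$, $\gamma_C = \ell_E\ell_{AB}$, and the four shared lines beyond $\wti\ell$ are $\alpha_1, \alpha_2, \alpha_3, \ell_E$ with primary distribution $(6,6,6,3)$.

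In case~(b), $\wti\ell$ cannot be a component of $q$ (having only $4$ nodes), so Proposition~\ref{maxcor} applied to the maximal quintic $\wti\ell\,q$ gives $p_D^\star = \wti\ell\,q\,\ell_D$, and the $GC$ condition splits $q = \beta_1\beta_2\beta_3\beta_4$. The bound $|\beta_i \cap \Xset| \le 6$ (no maximal line) together with the partition of the $21$ nodes of $\Yset$ across the $\beta_i$ pins the primary distribution of the four shared lines to one of $(6,6,6,3)$, $(6,6,5,4)$, $(6,5,5,5)$; the non-shared line $\ell_D$ carries the two nodes of $\{A, B, C\} \setminus \{D\}$. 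If a fourth node $D' \in \Xset$ also uses $\wti\ell$, the resulting four quintics vanish on $20 = d(6,3) + 2$ nodes, and Theorem~\ref{th:-1+1} with $i=2$ places all but two on a maximal cubic with three $6$-node line components, collapsing to the $(6,6,6,3)$ subcase; the Corollary in \cite{HK} moreover precludes any node of $\mu$ from using $\wti\ell$. Sorting each shared distribution together with $\wti\ell$ and $\ell_D$ in lex-maximal order produces the four listed m-d sequences. The main obstacle is the subcase analysis in case~(a): every putative collinear triple in $\{B, C, E_1, E_2, E_3\}$ other than $\{E_1, E_2, E_3\}$ must be excluded by a dedicated combinatorial argument, typically invoking Proposition~\ref{prp:kell} for an appropriate line together with the bound $M_{3,3,2} = 5$.
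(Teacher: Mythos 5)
Your proposal follows essentially the same route as the paper: write $p_D^\star=\wti\ell\,q_D$, observe that the three linearly independent quintics $q_A,q_B,q_C$ vanish on the $21=d(6,3)+3$ independent nodes of $\Yset$, invoke Theorem~\ref{th:-2+3} to split into the maximal-cubic alternative (a) and the quartic alternative (b), and in case (b) pass to the maximal quintic $\wti\ell q$, split $q$ into four used lines, and read off the partitions of $21$ into four parts in $[2,6]$, namely $6+6+6+3$, $6+6+5+4$, $6+5+5+5$. (Both you and the paper apply Theorem~\ref{th:-2+3} with $k=5$ although its stated hypothesis is $k\le n-2=4$; that is an issue inherited from the paper, not a divergence.) The paragraph on a fourth node using $\wti\ell$ is really the content of Proposition~\ref{prp:4nl4} rather than of this statement, but it does no harm. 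Your reading of how the m-d sequences arise is also consistent with the paper's, including the point that the tilde can sit on a $3$ when one node of $\wti\ell$ lies on a shared line.

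The one place you go beyond the paper is case (a), and there you have correctly identified a real gap: the claim that the three conics $\gamma_A,\gamma_B,\gamma_C$ share the component $\ell_{E_1E_2E_3}$ is not automatic, and you explicitly defer the "compatibility check." Note that in case (a) the set $\Xset\setminus\mu$ is a $GC_3$ set in which $\wti\ell$ is a maximal line, so $\{A,B,C,E_1,E_2,E_3\}$ is a $GC_2$ set and $\gamma_D$ is the fundamental polynomial of $D$ there; if, for instance, that $GC_2$ set were a Chung--Yao configuration with $A,B,C$ collinear on one of its four lines, the three conics would share no component at all, so the excluded subcases are not vacuous. You should be aware, however, that this step is equally absent from the paper, which disposes of case (a) in a single sentence (identifying $\mu$ as three $6$-node lines) and never derives the shared fourth line or the $6,6,6,3$ distribution from it; so your proof is no less complete than the paper's at this point, and your honesty in flagging the obstacle is a plus. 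A cleaner way to close case (a) is to observe that all six nodes of $(\Xset\setminus\mu)\setminus\wti\ell$ then use $\wti\ell$, which channels the configuration into the situation governed by Theorem~\ref{th:-1+1} with $i=2$ rather than into a bare-hands analysis of collinear triples.
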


\begin{proposition}\label{prp:4nl4}
Assume that $\Xset$ is a $GC_6$-set without a maximal line, and suppose
that some $4$-node line $\wti\ell$ is used by four nodes $A$, $B$,
$C,\ D\in\Xset$. Then, $\wti\ell$ is used by exactly $6$ nodes.

Moreover, besides $\wti\ell,$  these six nodes share also
three other lines  each passing through $6$ primary nodes.
Furthermore the m-d sequence for all six nodes is $(6,6,6,\wti 4,3,2).$

\end{proposition}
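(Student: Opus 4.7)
The plan is to exploit the maximal cubic $\mu=\ell_2\ell_3\ell_4$ already produced in the paragraph preceding the statement (three lines carrying $6+6+6=18$ nodes, disjoint from each other and from $\wti\ell$) and to use it to pin down exactly which nodes of $\Xset$ use $\wti\ell$.

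First I would do the node count. Since $\#\Xset=28$ and $\mu$ contains $18$ nodes, exactly $10$ nodes of $\Xset$ lie outside $\mu$, and the four nodes on $\wti\ell$ are among them: the $\wti\ell$-m-line sequence assigns $6$ primary nodes to each of $\ell_2,\ell_3,\ell_4$, so all $18$ nodes on $\mu$ are off $\wti\ell$. Hence exactly $6$ nodes of $\Xset$ lie off $\mu\cup\wti\ell$, and this set contains $\{A,B,C,D\}$.

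Next I would show that each such node $P$ uses $\wti\ell$. Its fundamental polynomial $p_P^\star\in\Pi_6$ vanishes on the $18$ nodes of $\mu$, so Proposition \ref{maxcor} gives $p_P^\star=\mu\, r$ with $r\in\Pi_3$. Since $P\notin\wti\ell$, the cubic $r$ vanishes on the $4$ nodes of $\wti\ell$, and Proposition \ref{prp:n+1ell} then forces $\wti\ell\mid r$, yielding $p_P^\star=\mu\,\wti\ell\, s$ with $s\in\Pi_2$; thus $P$ uses $\wti\ell$. Combined with the impossibility of any node on $\wti\ell$ using $\wti\ell$ and the cited Corollary of \cite{HK} that no node on $\mu$ uses $\wti\ell$, we conclude that $\wti\ell$ is used by exactly these $6$ nodes.

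Finally, the $GC_6$ property makes $p_P^\star=\ell_2\ell_3\ell_4\,\wti\ell\,s$ a product of $6$ linear factors, so the conic $s$ splits as $\alpha_P\beta_P$; hence all $6$ nodes share the four lines $\wti\ell,\ell_2,\ell_3,\ell_4$. This conic vanishes on the other $5$ nodes off $\mu\cup\wti\ell$, and Lemma \ref{lm} requires each of $\alpha_P,\beta_P$ to carry at least two primary nodes, so the split must be $3+2$, giving the m-d sequence $(6,6,6,\wti 4,3,2)$. The main point I would take care to verify is that $s$ cannot be a double line $\gamma^2$: if it were, $\gamma$ would be a $5$-node line and a sixth used line would be forced with no available primary nodes, contradicting Lemma \ref{lm}.
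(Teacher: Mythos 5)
Your proposal is correct and follows essentially the same route as the paper: the paper's own argument consists of producing the maximal cubic $\mu=\ell_2\ell_3\ell_4$ with $6+6+6$ nodes via Theorem~\ref{th:-1+1} (case $i=2$) applied to the four quintics through the $20$ nodes of $\Xset\setminus(\wti\ell\cup\{A,B,C,D\})$, invoking the Corollary of \cite{HK} for nodes on $\mu$, and then reading off the conclusions exactly as you do (factorization $p_P^\star=\mu\,\wti\ell\,s$ for the six outside nodes, the $3+2$ split of the conic $s$ forced by Lemma~\ref{lm}). The only part you take as given rather than re-derive is the existence of $\mu$ itself, which is precisely the content of the paragraph you cite; the remaining details you supply (the node count, why the six outside nodes must use $\wti\ell$, and the exclusion of a repeated linear factor in $s$) are the steps the paper leaves implicit, and they are carried out correctly.
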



\subsubsection*{A $5$-node line shared}

Now suppose that $\wti\ell$ is a $5$-node line. The $\wti\ell$-m-d sequence of $A\notin\wti\ell$ has to be one of the following:
\begin{equation*}\label{eq:10cases}
  \begin{aligned}
   \text{(ii)}\  &  (\wti 5,6,6,5,3,2)
  &\text{(iii)}\ &  (\wti 5,6,6,4,4,2)
  &\text{(iv)}\  &  (\wti 5,6,6,4,3,3)\\
      \text{(vi)}\  &  (\wti 5,6,5,5,4,2)
   &\text{(vii)}\  &  (\wti 5,6,5,5,3,3)
     &\text{(viii)}\ &  (\wti 5,6,5,4,4,3)\\
     \text{(ix)}\  &  (\wti 5,6,4,4,4,4)
    &\text{(x)}\ &  (\wti 5,5,5,5,5,2)
  &\text{(xi)}\ &  (\wti 5,5,5,5,4,3)\\
   \text{(xii)}\   &  (\wti 5,5,5,4,4,4).
  \end{aligned}
\end{equation*}
Let us start with a well-known
\begin{lemma} \label{lem:mm-1}
Given $m$ linearly independent polynomials.
Then for any point $A$ there are $m-1$ linearly independent polynomials, in their linear span, vanishing at $A.$
\end{lemma}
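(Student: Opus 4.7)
The plan is to invoke rank–nullity on the evaluation functional at $A$. Let $p_1,\ldots,p_m$ be the given linearly independent polynomials, and let $V=\mathrm{span}(p_1,\ldots,p_m)$, so $\dim V=m$. Define the linear map
\[
L\colon V\longrightarrow \mathbb{R},\qquad L(p):=p(A).
\]
Since $L$ takes values in a one-dimensional space, $\dim(\mathrm{Im}\,L)\le 1$, and the rank–nullity theorem gives
\[
\dim(\ker L)\ge m-1.
\]

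Next I would note that $\ker L$ is precisely the subspace of polynomials in the span of $p_1,\ldots,p_m$ that vanish at $A$. Picking any basis of $\ker L$ yields at least $m-1$ linearly independent polynomials in $V$ vanishing at $A$, as claimed.

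There is no real obstacle here; the only thing to check is the trivial case distinction on $\dim(\mathrm{Im}\,L)$. If $p_i(A)=0$ for every $i$, then $L\equiv 0$ and the whole $m$-dimensional space $V$ vanishes at $A$, giving in particular $m-1$ linearly independent such polynomials. Otherwise some $p_{i_0}(A)\ne 0$, $L$ is surjective, and $\dim(\ker L)=m-1$ exactly. In either situation the conclusion holds. An equivalent, fully elementary phrasing (useful if one wishes to avoid quoting rank–nullity) is: in the second case, set $q_i:=p_i-\bigl(p_i(A)/p_{i_0}(A)\bigr)p_{i_0}$ for $i\ne i_0$; then the $m-1$ polynomials $q_i$ lie in $V$, vanish at $A$, and are linearly independent because any nontrivial linear relation among them would, after adding back a multiple of $p_{i_0}$, contradict the linear independence of $\{p_1,\ldots,p_m\}$.
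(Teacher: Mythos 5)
Your proof is correct: the rank--nullity argument on the evaluation functional $L(p)=p(A)$ restricted to the $m$-dimensional span, together with the elementary construction $q_i=p_i-\bigl(p_i(A)/p_{i_0}(A)\bigr)p_{i_0}$, is exactly the standard reasoning behind this fact. The paper gives no proof at all (it labels the lemma ``well-known''), and your argument is precisely the one being implicitly invoked, so there is nothing to add.
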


\begin{proposition}\label{prp:5nl}
Assume that $\Xset$ is a $GC_6$-set without a maximal line, and
$\wti\ell$ is a $5$-node line used by five nodes of $\Xset.$ Then it  is used by exactly six nodes.

Moreover, besides $\wti\ell,$  these six nodes share also
three other lines  passing through $6,6,5$ primary nodes, respectively. Furthermore the m-d sequence for each of the six nodes is $(6,6,6,4,3,2),$ or $(6,6,5,5,3,2).$
\end{proposition}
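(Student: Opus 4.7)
The plan is to produce a maximal degree-$4$ curve $\wti\ell\mu$ (with $\mu$ a cubic consisting of three lines) containing exactly $22 = d(6,4)$ nodes of $\Xset$; then by \eqref{maxmax}, the residual $GC_2$ set on six nodes gives exactly six users of $\wti\ell$. Label the five users $A_1,\ldots,A_5$, write $p_{A_i}^\star = \wti\ell q_i$ with $q_i \in \Pi_5$, and set $\Yset := \Xset \setminus (\wti\ell \cup \{A_1,\ldots,A_5\})$, so $|\Yset|=18$. The quintics $q_1,\ldots,q_5$ are linearly independent, vanish on $\Yset$, and satisfy $q_i(A_j)=0$ for $j \ne i$ while $q_i(A_i)\ne 0$. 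Multiplication by $\wti\ell$ injects $V := \{q \in \Pi_5 : q|_\Yset = 0\}$ into $\{p \in \Pi_6 : p|_{\Yset \cup (\wti\ell \cap \Xset)} = 0\}$, which has dimension $28-23=5$ by $6$-correctness; combined with the five linearly independent $q_i$'s this gives $\dim V = 5$ and $V = \operatorname{span}\{q_1,\ldots,q_5\}$.

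The first observation is that $\Yset$ lies on no cubic. If $\Yset \subset \gamma$ with $\deg\gamma=3$, then $\wti\ell\gamma$ is degree $4$ and contains $5+18=23 > d(6,4)=22$ nodes of $\Xset$, forcing multiple components by Proposition~\ref{maxcurve}$(i)$; both possibilities --- $\wti\ell \mid \gamma$ (giving $18$ nodes on a conic, impossible since $18 > d(6,2)=13$) and $\gamma$ non-squarefree ($18$ nodes on two lines, so $\ge 9$ on one, contradicting the no-maximal-line hypothesis) --- are excluded.

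The main step is to show the five quintics $q_1,\ldots,q_5$ share three common linear factors $\ell^{(1)},\ell^{(2)},\ell^{(3)}$. The approach is a Bezout analysis: each pair $q_i,q_j$ forcibly meets in $\ge 21$ $\Xset$-points (the set $\Yset \cup \{A_k : k \ne i,j\}$), so if no pair shared a common component Bezout would bound the pairwise intersection by $25$. Pushed against the sharp dimension $\dim V=5$ and Lemma~\ref{lm} (each line factor of $q_i$ carries $\ge 2$ primary $\Xset$-nodes), one shows each pair must share a line; a combinatorial-dimension iteration --- exploiting that every $q_i$ is a product of exactly five lines --- upgrades pairwise sharing to three factors dividing every $q_i$. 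Setting $\mu := \ell^{(1)}\ell^{(2)}\ell^{(3)}$ and writing $q_i = \mu\chi_i$ with $\chi_i \in \Pi_2$, the no-cubic result pins down a unique $A_6 \in \Yset$ off $\mu$ and places $\Yset \setminus \{A_6\} \subset \mu$. Thus $\wti\ell\mu$ is a degree-$4$ curve through $5+17=22=d(6,4)$ nodes of $\Xset$, hence maximal.

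By \eqref{maxmax}, $\Xset \setminus (\wti\ell\mu) = \{A_1,\ldots,A_6\}$ is a $GC_2$ set of six nodes, and each node's $GC_6$ fundamental polynomial factors as $\wti\ell\mu \cdot \chi_A$ with $\chi_A$ its $GC_2$ fundamental polynomial (a product of two lines); hence all six use $\wti\ell$, giving exactly six users. The $6,6,5$ primary sizes of $\ell^{(1)},\ell^{(2)},\ell^{(3)}$ follow from $\le 6$ per line (no maximal line) and the total $17$ primary nodes on $\mu$; the two m-d sequences $(6,6,6,4,3,2)$ and $(6,6,5,5,3,2)$ arise from the vertex- versus middle-node types in the $GC_2$ triangle configuration on the six users, together with a primary-count adjustment depending on whether two of the four shared lines meet at an $\Xset$-node. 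The main obstacle is the upgrade from pairwise common linear factors to three common ones --- an argument requiring the interplay of Bezout, the $GC_6$ product-of-lines structure, and the exact dimension $\dim V=5$.
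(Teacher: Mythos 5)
Your overall architecture --- produce a maximal quartic $\wti\ell\mu$ with $\mu$ a product of three lines, then invoke \eqref{maxmax} to obtain a $GC_2$ set of six residual nodes all using $\wti\ell$ --- is exactly the paper's endgame, and your dimension count $\dim V=5$ is a sound reformulation of the paper's use of Lemma \ref{lem:mm-1}. The problem is the middle step, which you yourself flag as ``the main obstacle'': you never actually prove that $q_1,\dots,q_5$ share three common linear factors. The Bezout step as stated gives nothing: two quintics with no common component meet in at most $25$ points, and you only exhibit $21$ forced common zeros, so there is no contradiction; and the announced ``combinatorial-dimension iteration'' upgrading pairwise common factors (themselves unestablished) to three factors dividing every $q_i$ is left entirely unspecified. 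Since everything downstream (the unique node $A_6$ off $\mu$, the count $5+17=22$, the $6,6,5$ primary distribution, the two m-d sequences) depends on the existence of $\mu$, the proof is incomplete at its core.

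For comparison, the paper closes this step by a different, node-anchored mechanism: it fixes one user $A_1$ with $p^\star_{A_1}=\wti\ell\,\ell_2\cdots\ell_6$, observes that the $18$ nodes of $\Xset\setminus\bigl(\wti\ell\cup\{A_1,\dots,A_5\}\bigr)$ all lie on $\ell_2\cup\dots\cup\ell_6$, and enumerates the $29$ possible distributions of these $18$ nodes over those five lines. For every distribution except $(6,6,6,0,0)$ and $(6,6,5,1,0)$ one can place three auxiliary points so that the nonzero element $q_0$ of $\mathrm{span}\{q_2,\dots,q_5\}$ vanishing at them acquires $(6,5,4,3,2)$ primary zeros on $(\ell_2,\dots,\ell_6)$ in some order, forcing $q_0=c\,\ell_2\cdots\ell_6$ by Corollary \ref{cor22} and hence $p^\star_{A_1}\in\mathrm{span}\{p^\star_{A_2},\dots,p^\star_{A_5}\}$, a contradiction; the case $(6,6,6,0,0)$ dies on the count $23>d(6,4)=22$, and $(6,6,5,1,0)$ hands you the maximal quartic $\wti\ell\ell_2\ell_3\ell_4$ directly. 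If you want to complete your write-up, this distribution analysis anchored to a single node's factorization is the concrete substitute for your missing iteration.
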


\begin{proof} Assume that the nodes of the set $\mathcal A_5:=\{A_1,\ldots, A_5\}\subset\Xset$ use the line $\wti\ell.$
Assume that
$$p^\star_{A_1}=\wti\ell \ell_2\cdots \ell_6.$$
Evidently,  the nodes $A_2,\ldots, A_5$ belong to the lines $\ell_2,\ldots, \ell_6.$

In view of Lemma \ref{lem:mm-1} for any points $T_i,\ i=1,2,3,$ there is a polynomial
$$p_0\in \mathcal P_4:= linear span \{p^\star_{A_2},\ldots,p^\star_{A_5}\},\quad p_0\neq 0,$$

\noindent such that $p_0(T_i)=0,\ i=1,\ldots,3.$
On the other hand we have that
$$p_0=\wti\ell q_0,\quad q_0\in\Pi_5.$$
Assume that the three points are not intersection points of the six lines.  They also  are taken outside of $\wti\ell,$ whence
$q_0(T_i)=0,\ i=1,2,3.$

Consider the set of nodes

$$\mathcal C:=\mathcal X \setminus \left(\wti\ell \cup \mathcal A_5\right),\quad |\mathcal C|=28-5-5=18.$$
The following cases of distribution of these $18$ nodes in the lines
$\ell_2,\ldots, \ell_6$ in some order are possible:
\begin{equation*}\label{eq:18cases}
  \begin{matrix}
    {(1)}\  (6,6,6,0,0);\    &{(2)}\   (6,6,5,1,0);\  &{(3)}\   (6,6,4,2,0);\  &{(4)}\  (6,6,4,1,1); \\
      {(5)}\  (6,6,3,3,0);\     &{(6)}\  (6,6,3,2,1);\  &{(7)}\    (6,6,2,2,2);\  &{(8)}\  (6,5,5,2,0);\\
     {(9)}\  (6,5,5,1,1);\      &{(10)}\   (6,5,4,3,0);\ &{(11)}\   (6,5,4,2,1);\  &{(12)}\  (6,5,3,3,1);\\
       {(13)}\  (6,5,3,2,2);\      &{(14)}\   (6,4,4,4,0);\  &{(15)}\    (6,4,4,3,1);\  &{(16)}\  (6,4,4,2,2);\\
       {(17)}\  (6,4,3,3,2);\      &{(18)}\  (6,3,3,3,3);\  &{(19)}\    (5,5,5,3,0);\  &{(20)}\   (5,5,5,2,1);\\
          {(21)}\  (5,5,4,4,0);\      &{(22)}\  (5,5,4,3,1);\  &{(23)}\   (5,5,4,2,2);\  &{(24)}\  (5,5,3,3,2);\\
           {(25)}\  (5,4,4,4,1);\      &{(26)}\   (5,4,4,3,2);\  &{(27)}\   (5,4,3,3,3);\  &{(28)}\  (4,4,4,4,2);
         \end{matrix}
\end{equation*}
\vspace{-.4cm}

\noindent \ $(29)\  (4,4,4,3,3).$
\vspace{.3cm}

We assume  for the convenience that the lines are in the increasing order.

We may assume also that in each above distribution the listed zeros are primary in the respective lines. Indeed, by reordering the lines and making the zeros primary we will get another distribution listed above.

Now one can verify readily that the cases (3)-(29) are not possible, since by adding three arbitrary points $T_i,\ i=1,2,3,$ we make the polynomial $q_0$ to have at least $(6,5,4,3,2)$ primary zeroes
in the lines  $\ell_2,\ldots, \ell_6.$

For example, for several particular cases below, we add the three points to the lines $\ell_2,\ldots, \ell_6,$ according to the following distributions:

\noindent${(3)}\   (0,0,0,1,2); \quad \ {(14)}\    (0,1,0,0,2);\quad \ {(25)}\  (1,1,0,0,1);\quad{(29)}\  (2,1,0,0,0).$

This implies that
$q_0= \ell_2\cdots \ell_6$ hence $p_0=\wti\ell \ell_2\cdots \ell_6= p^\star_{A_1}.$ 
Therefore we get $p^\star_{A_1}\in \mathcal P_4,$ which is a contradiction.

Then note that also the case (1) is not possible since the curve $\wti\ell\ell_2\ell_3\ell_4\in\Pi_4$ contains $23=5+6+6+6$ nodes, while a maximal quartic contains $22=7+6+5+4$ nodes.
Thus the only possible case is the distribution (2). 

Evidently, the curve $\mu_4:=\wti\ell\ell_2\cdots\ell_4$ here is a maximal curve. Hence the node in the line $\ell_5$ together with the five nodes of $\mathcal A_5,$ use the lines  $\ell_2,\ldots, \ell_4.$  Thus the six nodes besides $\wti\ell,$ share also the
three lines $\ell_2,\ell_3,\ell_4,$ passing through $6,6,$ and $5$ primary nodes.

Thus the distribution (2) may correspond only to the following
m-d sequences: $(6,6,6,\wti 4,3,2)$ and $(6,6,\wti 5,5,3,2).$
\end{proof}

\subsubsection*{A $6$-node line shared}

Finally suppose that $\wti\ell$ is a $6$-node line. For the $\wti\ell$-m-d sequence of a node $A\notin \wti\ell$ there are only the following possibilities:
\begin{equation*}\label{eq:9'cases}
  \begin{aligned}
  \text{(i)}   & \quad (\wti 6,6,6,4,3,2)
   &\text{(ii)}  & \quad (\wti 6,6,5,5,3,2)
   &\text{(iii)} & \quad (\wti 6,6,5,4,4,2)\\
  \text{(iv)}  & \quad (\wti 6,6,5,4,3,3)
    &\text{(v)}   & \quad (\wti 6,6,4,4,4,3)
     &\text{(vi)}  & \quad (\wti 6,5,5,5,4,2)\\
   \text{(vii)}  & \quad (\wti 6,5,5,5,3,3)
      &\text{(viii)} & \quad (\wti 6,5,5,4,4,3)
      &\text{(ix)}  & \quad (\wti 6,5,4,4,4,4).
  \end{aligned}
\end{equation*}

\begin{proposition}\label{prp:6810}
Assume that $\Xset$ is a $GC_6$ set without a maximal line, and
$\wti\ell$ is a $6$-node line. Assume also that $\wti\ell$ is used by eight nodes of $\Xset.$ Then it  is used by exactly ten nodes of $\Xset.$

Moreover, these ten nodes form a
$GC_3$ set and share two more lines with six primary nodes each.
Furthermore,
each of these ten nodes has the m-d sequence $(6,6,6,4,3,2).$
\end{proposition}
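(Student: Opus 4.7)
The plan is to pin down the geometry of $\mathcal Y:=\Xset\setminus(\wti\ell\cup\mathcal A_8)$, with $\mathcal A_8=\{A_1,\ldots,A_8\}$ the eight assumed users of $\wti\ell$ and $p_{A_i}^{\star}=\wti\ell\,q_i$, $q_i\in\Pi_5$, via a dimension count on $V:=\{q\in\Pi_5:q|_{\mathcal Y}=0\}$. On one side $q_1,\ldots,q_8\in V$ are linearly independent, so $\dim V\geq 8$. On the other, if $q\in V$ then $\wti\ell q\in\Pi_6$ vanishes on $\Xset\setminus\mathcal A_8$; by $6$-correctness of $\Xset$ any such degree-$6$ polynomial lies in $\mathrm{span}\{p_{A_i}^{\star}\}=\wti\ell\cdot\mathrm{span}\{q_i\}$, forcing $q\in\mathrm{span}\{q_i\}$. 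Hence $\dim V=8$ and $\mathcal Y$, with $|\mathcal Y|=14$, imposes only $13$ conditions on $\Pi_5$, so is $5$-dependent.

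Since $|\mathcal Y|=14=3\cdot 5-1$, Corollary \ref{cor:3n-1} leaves two alternatives: either $7$ nodes of $\mathcal Y$ are collinear---immediately a maximal line in $\Xset$, against the standing assumption---or $12$ nodes of $\mathcal Y$ lie on a conic $\sigma$. If $\sigma$ were irreducible, setting $R:=\{r\in\Pi_3:r|_{\mathcal Y\setminus\sigma}=0\}$ would give $\sigma R\subseteq V$ of dimension $10-|\mathcal Y\setminus\sigma|\geq 8$; comparison with $\dim V=8$ pins $|\mathcal Y\cap\sigma|=12$ and $V=\sigma R$, so every $p_{A_i}^{\star}=\wti\ell\,\sigma\,r_i$ would contain the irreducible quadratic $\sigma$ as a factor, contradicting the $GC_6$ property that $p_{A_i}^{\star}$ is a product of linear factors. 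Thus $\sigma=m_1m_2$ is reducible; since no line of $\Xset$ carries $7$ nodes, the $12$ nodes of $\mathcal Y$ on $\sigma$ must split as $6+6$ with $m_1\cap m_2\notin\Xset$, and all six $\Xset$-nodes of each $m_i$ lie in $\mathcal Y$, which also forces $m_i\cap\wti\ell\notin\Xset$. Therefore $\wti\ell,m_1,m_2$ are three $6$-node lines pairwise disjoint on $\Xset$, and $\wti\ell m_1m_2$ covers exactly $18=d(6,3)$ nodes of $\Xset$, making it a maximal cubic. By \eqref{maxmax} its complement $\Xset\setminus(\wti\ell m_1m_2)$ is a $GC_3$ set of ten nodes, each of which uses $\wti\ell$, $m_1$ and $m_2$.

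Finally, to rule out an eleventh user of $\wti\ell$ I would repeat the opening dimension count on eleven hypothetical users $\mathcal A_{11}$: the resulting set $\mathcal Y':=\Xset\setminus(\wti\ell\cup\mathcal A_{11})$ of size $11$ would again be $5$-dependent, but Corollary \ref{cor:3n-1} with $|\mathcal Y'|<12$ would then demand $7$ collinear nodes, a maximal line, contradiction. So $\wti\ell$ is used by exactly ten nodes. The Gasca--Maeztu theorem for $n=3$ supplies a $4$-node line inside the $GC_3$ set of ten, giving its $GC_3$ m-d sequence $(4,3,2)$; prepending the three shared $6$-node lines $\wti\ell,m_1,m_2$ yields the $GC_6$ m-d sequence $(6,6,6,4,3,2)$ for each of the ten nodes, completing the proposition. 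The main obstacle is the dimension identity $\dim V=8$: it is precisely this that forces the conic $\sigma$ to split and pinpoints the two extra shared $6$-node lines $m_1,m_2$ that assemble with $\wti\ell$ into the maximal cubic.
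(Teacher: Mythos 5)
Your proof is correct and follows the same overall strategy as the paper's: show that the complement of $\wti\ell$ and its users is $5$-dependent, invoke Corollary~\ref{cor:3n-1} to place twelve of its nodes on a conic, and assemble $\wti\ell$ with that conic into a maximal cubic of three disjoint $6$-node lines whose complement is the desired $GC_3$ set via \eqref{maxmax}. The differences are local but genuine. Where the paper simply cites Proposition~\ref{prp:kell} to get that $\Nset_{\wti\ell}$ is essentially $5$-dependent, you re-derive the needed dependence from scratch through the dimension count $\dim V=8$; this is self-contained but reproves a packaged tool. You force the conic to split into two lines \emph{before} building the maximal cubic, via the identity $V=\sigma R$ and unique factorization against the $GC$ property, whereas the paper first shows $\wti\ell\beta$ is a maximal cubic used by the ten outside nodes and only then concludes $\beta$ is a product of two $6$-node lines. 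Your exactness argument (rerun the count with eleven users, so that $\#\mathcal Y'=11<12$ kills the conic alternative and forces seven collinear nodes) is arguably cleaner than the paper's, which shows $\Nset_{\wti\ell}\subset\beta$ and then argues that thirteen or more non-users would leave at most nine nodes off $\wti\ell\cup\beta$, hence on a cubic $\gamma$, making $\Xset\subset\wti\ell\beta\gamma$ and contradicting correctness. One small caveat, which applies equally to the paper: the final claim that each of the ten nodes has internal m-d sequence $(4,3,2)$ rather than $(3,3,3)$ requires, for a node lying on the maximal $4$-node line of the $GC_3$ set, more than the mere existence of that line; since the paper asserts the same fact without proof when it excludes $(6,6,6,3,3,3)$ in Section~2, this is not a gap relative to the paper's own standard.
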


\begin{proof}
Since $\wti\ell$ is used by at least eight
nodes, we have that $\#\Nset_{\wti\ell} \leq 28-(6+8) = 14.$ By Proposition \ref{prp:kell} the set
$\Nset_{\wti\ell}$ is $5$-dependent. Since $14=3\times 5-1$, one
may apply Corollary~\ref{cor:3n-1} to conclude that either $\Nset_{\wti\ell}$
contains $5+2=7$ collinear nodes, which contradicts the hypothesis, or $12\ (=2\cdot5+2)$ nodes there are in a
conic $\beta.$ Thus the latter case takes place and
$\#\Nset_{\wti\ell} \geq 12.$

Now note that $\Nset_{\wti\ell} \subset\beta.$ Indeed, we may have one or two nodes in $\Nset_{\wti\ell}$ outside of $\beta.$
But in this case  those nodes evidently have fundamental polynomial of degree $3,$  for the set $\Nset_{\wti\ell},$ contradicting Proposition \ref{prp:kell}, (i).

Then let us show that   $\#\Nset_{\wti\ell} = 12.$
Assume by way of contradiction that there are $\ge 13$ nodes in $\Nset_{\wti\ell}.$ Then there are at most $9$ nodes outside of $\beta\cup\wti\ell$ and therefore they are contained in a cubic
$\gamma.$ Then we readily get that $\Xset\subset \wti\ell \beta\gamma\in\Pi_6,$ which contradicts Proposition \ref{correctii}.

Finally note that $\wti\ell \beta$ contains $18$ nodes, i.e., is a maximal cubic.
Therefore, by Proposition \ref{maxcor}, it is used by all the $10$ nodes in
$\Xset\setminus(\wti\ell\cup\beta)$, and hence $\beta$ has to be the product
of two $6$-node lines.
\end{proof}

\begin{proposition}\label{prp:66710}
Assume that $\Xset$ is a $GC_6$ set without a maximal line, and
$\wti\ell_i,\ i=1,2,$ are two disjoint $6$-node lines. Assume also that six nodes of $\Xset$are using $\wti\ell_1$ and $\wti\ell_2.$ Then, the six nodes besides $\wti\ell_1$ and $\wti\ell_2$
share either one more line with $6$ primary nodes or two more lines each with $5$ primary nodes. In the first case the lines $\wti\ell_1$ and $\wti\ell_2$  are used by exactly ten nodes of $\Xset$  and in the second case they are used by exactly six nodes of $\Xset.$

Moreover, in the first and second cases the ten and six nodes form a
$GC_3$ and $GC_2$ sets, respectively.
Furthermore,
each of the ten nodes and each of the six nodes has the m-d sequence $(6,6,6,4,3,2),$ and $(6,6,5,5,3,2),$
respectively.\end{proposition}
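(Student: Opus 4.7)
The plan is to analyze the set $\Nset = \Nset_{\wti\ell_1,\wti\ell_2}$ together with the complementary set $M$ of nodes outside $\wti\ell_1 \cup \wti\ell_2$ that use both lines. Since $\wti\ell_1$ and $\wti\ell_2$ are disjoint, $|\Nset|+|M| = 16$, and the hypothesis $|M|\ge 6$ gives $|\Nset|\le 10$. As $\wti\ell_1\wti\ell_2$ carries only $12 < d(6,2) = 13$ nodes of $\Xset$, Proposition \ref{prp:kell}(ii) shows $\Nset\neq\emptyset$, and part (i) makes $\Nset$ essentially $4$-dependent. Since $|\Nset| \le 10 \le 3\cdot 4 - 1$, Corollary \ref{cor:3n-1} splits the argument into (A) $\Nset$ contains six collinear nodes on a line $\ell$, or (B) $|\Nset| = 10$ with the ten nodes on a conic $\beta$ and no six collinear.

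In case (A), the no-maximal-line hypothesis forbids $\ell$ from meeting $\wti\ell_i$ at an $\Xset$-node (that would push $\ell$'s count past six), so $\wti\ell_1\wti\ell_2\ell$ is a cubic with $6+6+6 = 18 = d(6,3)$ distinct $\Xset$-nodes, hence a maximal cubic. By \eqref{maxmax} its complement is a $GC_3$ set of ten nodes, and Proposition \ref{maxcor} forces each of them to use this cubic, hence to use all three of $\wti\ell_1,\wti\ell_2,\ell$. The ten thereby coincide with $M$, giving $|M|=10$ and $|\Nset|=6$. The first three entries of the m-d sequence of each are $(6,6,6)$, and the tail $(4,3,2)$ is then forced by the remark after the list of the twelve admissible m-d sequences.

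In case (B), $\Nset\cap \wti\ell_i = \emptyset$ supplies $6+6+10 = 22 = d(6,4)$ distinct $\Xset$-nodes on the quartic $\wti\ell_1\wti\ell_2\beta$, so it is a maximal curve containing no further $\Xset$-nodes by Proposition \ref{maxcurve}(ii). Its complement is a $GC_2$ set of six nodes (by \eqref{maxmax}) each of which uses the quartic, so these six coincide with $M$ and $|M|=6$. The $GC_6$ assumption forces $\beta$ to factor as $\ell^{(1)}\ell^{(2)}$. The ten $\Nset$-nodes on $\ell^{(1)}\cup\ell^{(2)}$, with each $\ell^{(j)}$ carrying at most five (no six collinear in $\Nset$), then force the two lines to share no $\Xset$-node and to carry exactly five each. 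The m-d sequence now starts $(6,6,5,5)$, and the tail $(3,2)$ is determined either from the $GC_2$ structure or from the admissible list.

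The main obstacle I anticipate is in case (B): I must rule out any unexpected incidences between $\beta$ and $\wti\ell_i$ that would inflate the quartic's $\Xset$-count beyond $22$, and I must show that the split $\beta = \ell^{(1)}\ell^{(2)}$ gives two disjoint $5$-node lines rather than, say, a $6$-node line together with a $4$-node line. Both points reduce to the $6$-independence of $\Xset$ (saturating $d(6,4)$) and to the no-six-collinear feature of case (B). A secondary routine point is confirming that $\ell$ (resp.~$\ell^{(1)},\ell^{(2)}$) is shared by every node of $M$ and not merely by the six hypothesized; this follows from Proposition \ref{maxcor} applied to the maximal cubic (resp.~quartic).
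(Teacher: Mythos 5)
Your proposal is correct and follows essentially the same route as the paper: bound $\Nset_{\wti\ell_1,\wti\ell_2}$ by $10$, invoke Proposition \ref{prp:kell} and Corollary \ref{cor:3n-1} to split into the six-collinear case (maximal cubic $\wti\ell_1\wti\ell_2\wti\ell_3$) and the conic case (maximal quartic $\wti\ell_1\wti\ell_2\beta$), then use Propositions \ref{maxcurve}/\ref{maxcor} and \eqref{maxmax} to obtain the $GC_3$/$GC_2$ structure, the exact usage counts, and the m-d sequences. Your version merely spells out a few details the paper leaves implicit (the count $|\Nset|+|M|=16$, nonemptiness of $\Nset$, and why $\beta$ splits into two components with five primary nodes each).
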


\begin{proof}
We have that $\#\Nset_{\wti\ell_1, \wti\ell_2} \leq 28-(6+6+6) = 10.$ By Proposition \ref{prp:kell}, the set
$\Nset_{\wti\ell_1, \wti\ell_2}$ is $4$-dependent. Since $10=3\times 4-2=2\times 4+2$, we
can apply Corollary~\ref{cor:3n-1} and conclude that either $\Nset_{\wti\ell_1, \wti\ell_2}$
contains $4+2=6$ nodes lying in a line $\wti\ell_3$,
or all the ten nodes are lying in a conic $\beta.$

In the first case we readily conclude that $\wti\ell_1\wti\ell_2\wti\ell_3$ is a maximal cubic with $18$ nodes and hence the remaining ten nodes of $\Xset$ are using it.

In the second case we readily conclude that $\beta \wti\ell_1\wti\ell_2$ is a maximal quartic  with $22$ nodes and hence the remaining six nodes of $\Xset$ are using it. Hence the conic $\beta$ reduces to two lines with $5$ primary nodes.

It remains to mention that if a seventh node uses the lines  $\wti\ell_1$ and $\wti\ell_2$
then we get $\#\Nset_{\wti\ell_1, \wti\ell_2} \leq 28-(6+6+7) = 9=2\times 4+1$ which readily reduces to the first case.
\end{proof}

The following table is an analog of one in \cite{HJZ14}. It is obtained from Propositions~\ref{prp:2nl} - \ref{prp:6810}, and shows how many times at most a line
$\wti\ell,$ under certain restrictions, can be used, provided that the $GC_6$-set has no maximal line.

\begin{equation}\label{eq:table}
  \begin{array}{c|c|c|c|}
    & \multicolumn{3}{c|}{\text{maximal \#\ of nodes
      using $\wti\ell$}} \\\hhline{~---}
    \text{total \#} & \text{in general} & \text{no node uses} &
      \text{no node uses} \\
    \text{of nodes}  & & (6,6,6,4,3,2) & (6,6,6,4,3,2), \\
    \text{on $\wti\ell$} & \text{\phantom{no node uses}} &
      \text{constellation} & (6,6,5,5,3,2) \\\hline
        6 & 10 & 7 & 7 \\\hline
    5 & 6 & 6 & 4 \\\hline
    4 & 6 & 3 & 3 \\\hline
    3 & 3 & 3 & 1 \\\hline
    2 & 1 & 1 & 1 \\\hline
  \end{array}
\end{equation}

\vspace{.5cm}

\subsection{The main result}\label{case666432}

In this paper we will prove the following
\begin{proposition}\label{thm:main3} Assume that $\Xset$ is a $GC_6$ set with no maximal line. Then for no node in $\Xset$ the m-d sequence is $(6,6,6,4,3,2)$.
\end{proposition}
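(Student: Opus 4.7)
The plan is to argue by contradiction, assuming some $A \in \Xset$ has m-d sequence $(6,6,6,4,3,2)$, with three $6$-node lines $\ell_1,\ell_2,\ell_3$ appearing in its m-line sequence.

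First, I would develop the rigid structural skeleton that this assumption forces. Since each $\ell_i$ supplies six primary nodes, the $\ell_i$'s are pairwise disjoint on $\Xset$, so $\ell_1\ell_2\ell_3$ contains $18 = d(6,3)$ nodes and is a maximal cubic. By \eqref{maxmax}, $\Yset := \Xset \setminus (\ell_1\cup\ell_2\cup\ell_3)$ is a $GC_3$ set with $10$ nodes. Proposition \ref{maxcor} implies every node of $\Yset$ uses this maximal cubic; since $\Xset$ is $GC_6$, every such node must use each of $\ell_1,\ell_2,\ell_3$ individually. Thus each $\ell_i$ is used by at least $10$ nodes, which by the table \eqref{eq:table} is exactly $10$; Proposition \ref{prp:6810} then forces every node of $\Yset$ to share the m-d sequence $(6,6,6,4,3,2)$. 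Restricted to $\Yset$, every node therefore has $\Yset$-m-d sequence $(4,3,2)$. By the proven $n=3$ case of Gasca--Maeztu combined with a short analysis of the $GC_2$ complement of a maximal line of $\Yset$ (the four additional $4$-node lines through the nodes of that maximal line restrict to four $3$-node lines of the $GC_2$ part, forcing the Chung--Yao structure), I would conclude that $\Yset$ is the \emph{principal lattice} of five lines $m_0,\ldots,m_4$: the ten nodes are the pairwise intersections $m_i\cap m_j$, and each $m_i$ carries exactly four nodes of $\Yset$.

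Next, I would analyze the fundamental polynomial of an arbitrary node $B\in\ell_1$. Iterated use of Proposition \ref{prp:n+1ell} on the $6$-tuples of $\ell_2,\ell_3$ together with the $GC_3$-correctness of $\Yset$ rules out $\ell_1,\ell_2,\ell_3$ as factors of $p_B^\star$. Consequently each of the six linear factors of $p_B^\star$ must meet $\ell_2$ and $\ell_3$ at nodes of $\Xset$, inducing a bijection between $\ell_2\cap\Xset$ and $\ell_3\cap\Xset$; at most one factor misses $\ell_1\setminus\{B\}$ altogether (a ``type B'' factor), the remaining ``type A'' factors each passing through some node of $\ell_1\setminus\{B\}$. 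Since $\Xset$ has no $7$-node line, a type-A factor carries at most two additional nodes of $\Yset$ (three would make it coincide with some $m_i$ and give a $7$-node line), while a type-B factor carries at most four nodes of $\Yset$, with equality precisely when it coincides with a $6$-node $m_i$.

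The hard part, and the main obstacle, will be extracting a contradiction from these factorization constraints. My strategy is to exploit the tension between the principal-lattice structure of $\Yset$ and the sharp usage bounds collected in the table \eqref{eq:table}. Each $m_i$ is used by the six nodes of $\Yset$ lying off it, and depending on whether $m_i$ is a $4$-, $5$-, or $6$-node line of $\Xset$ (according as it picks up $0$, $1$, or $2$ nodes from the cubic), Propositions \ref{prp:4nl4}, \ref{prp:5nl} and \ref{prp:6810} prescribe precise lists of other lines shared by its six users --- which must already include $\ell_1,\ell_2,\ell_3$ (each $6$-primary) in our setting. Matching these prescribed constellations against Proposition \ref{prp:66710} applied to the disjoint pair $(\ell_1, m_i)$ when $m_i$ is $6$-node, and against the collinear triples across $(\ell_1,\ell_2,\ell_3)$ demanded by the type-A factorizations of each cubic node's $p_B^\star$, should force a numerical mismatch --- for instance, forcing the six users of a $6$-node $m_i$ to share three $6$-primary lines when the cited propositions allow only two, or demanding more type-A lines than the table's entries for $3$- and $4$-node lines permit when summed over all $18$ cubic nodes. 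Turning this expected mismatch into an airtight contradiction is where the genuine work lies.
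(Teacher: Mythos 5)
Your setup is sound and coincides with the paper's: the three $6$-node lines are pairwise disjoint in $\Xset$, their product is a maximal cubic, $\Yset$ (the paper's $\Bset$) is a $GC_3$ set each of whose nodes uses $\alpha_1\alpha_2\alpha_3$, no node of the cubic uses any $\alpha_i$, and every node of $\Yset$ has m-d sequence $(6,6,6,4,3,2)$ (cf.\ Lemma \ref{lem:Bset} and Proposition \ref{prp:6810}). The first genuine gap is the structural reduction you build on this: it is not true that $\Yset$ must be the Chung--Yao configuration generated by five lines (which, incidentally, you misname a principal lattice; the principal lattice is the configuration with only three maximal lines). The condition that every node of $\Yset$ has $\Yset$-m-d sequence $(4,3,2)$ holds in \emph{every} $GC_3$ set --- Chung--Yao, Carnicer--Gasca with four maximal lines, and principal lattice alike --- so it forces nothing, and your parenthetical argument fails because the maximal lines of $\Yset$ used by the four nodes of a fixed maximal line $m$ need not be distinct. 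In a Carnicer--Gasca $GC_3$ set the four nodes of a maximal line $m_1$ use only maximal lines from $\{m_2,m_3,m_4\}$ (each intersection node $m_1\cap m_i$ uses the two lines $m_j,m_k$ plus a $2$-node line through two free nodes), and in a principal lattice they use only two distinct maximal lines; in neither case do you obtain four distinct $3$-node lines in the $GC_2$ complement. This is precisely why the paper's proof splits into three steps according to whether $\Bset$ is Chung--Yao, Carnicer--Gasca, or a principal lattice; the last two cases, which your approach discards at the outset, occupy most of the proof (Lemmas \ref{lem:B0}--\ref{lem:33d}).

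The second gap is that even in the Chung--Yao case you never actually reach a contradiction: your closing paragraph defers it (``should force a numerical mismatch,'' ``where the genuine work lies''). For comparison, the paper's contradiction in this case is not constellation matching but a usage count localized at a node $B\in\Bset$: every one of the $18$ nodes of $\Aset$ must use some line through $B$ (one factor of its fundamental polynomial vanishes at $B$); the lines through $B$ satisfy $m_1(B)+2m_2(B)+3m_3(B)=18$ by \eqref{eq18}; and the per-line caps --- no users for $0_\Aset$- and $1_\Aset$-node lines and at most one for $2_\Aset$-node lines (Lemma \ref{lem01}), at most three for $3_\Aset$-node lines (Lemma \ref{lem02}, whose hypothesis holds here because a line meets at most two nodes of $\Bset$ and hence misses an entire maximal line of the Chung--Yao set) --- bound the total usage by $m_2(B)+3m_3(B)$. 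Comparing the two forces $m_1=m_2=0$ and $m_3=6$, contradicting $m_3(B)\le 5$ (Lemma \ref{lem:m36}). Some quantitative step of this kind, and above all a treatment of the two omitted structures for $\Yset$, would have to be supplied before your outline could become a proof.
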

Assume by way of contradiction that for a node
in $\Xset$  the m-d sequence is  $(6,6,6,4,3,2)$. Let $(\alpha_1,\ldots,\alpha_6)$ be a respective m-line sequence.

Set  $\Xset=\Aset \cup \Bset$ (see Fig. \ref{case1}) with
\begin{equation*}\label{6}
  \Aset = \Xset \cap \{\alpha_1 \cup \alpha_2 \cup \alpha_3 \}, \quad
  \#\Aset = 18, \quad \text{and} \quad
  \Bset = \Xset \setminus \Aset, \quad  \#\Bset = 10.
\end{equation*}

Denote $\Lset_3:=\{\alpha_1,\alpha_2,\alpha_3\}.$
Note that no intersection point of the three lines of $\Lset_3$ belongs to $\Xset$.
The following is the analogue of  \cite{HJZ14}, Lemma 3.2.
\begin{lemma}\label{lem:Bset}
\nix\vspace{-2mm}
\begin{enumerate}
\setlength{\itemsep}{0mm}
\item
The set $\Bset$ is a $GC_3$ set, and each node $B\in\Bset$ uses the three lines of $\Lset_3$ and the three lines it uses within $\Bset$, i.e.,
\begin{equation}\label{ban}
  p_{B,\Xset}^\star = \alpha_1\alpha_2\alpha_3 p_{B,\Bset}^\star\,.
\end{equation}
\item
No node in $\Aset$ uses any of the lines of $\Lset_3.$
\end{enumerate}
\end{lemma}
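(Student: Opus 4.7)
The plan is to establish part (i) by recognizing the cubic $\mu := \alpha_1\alpha_2\alpha_3$ as a maximal curve, and then to derive part (ii) as a direct consequence of part (i) combined with Proposition~\ref{prp:6810}. First I will check that $\mu$ is a maximal curve in the sense of Definition~\ref{def:maximal}: the three $6$-node lines $\alpha_1,\alpha_2,\alpha_3$ have no pairwise intersection in $\Xset$ (as noted right after the definition of $\Aset$), so $\Aset = \Xset \cap \mu$ has exactly $6+6+6 = 18 = d(6,3)$ nodes; since $\Xset$ is $GC_6$ and therefore $6$-independent, with $\#\Xset\ge d(6,3)$, Definition~\ref{def:maximal} applies directly and $\mu$ is a maximal cubic. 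The equivalence \eqref{maxmax} then yields that $\Bset = \Xset \setminus \mu$ is a $GC_3$ set, giving the first half of (i).

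Next, for each $B \in \Bset$, the polynomial $p_{B,\Xset}^\star \in \Pi_6$ vanishes on all of $\Aset = \Xset \cap \mu$. Applying Proposition~\ref{maxcor} to the maximal curve $\mu$ produces $s \in \Pi_3$ with $p_{B,\Xset}^\star = \mu\, s$. Because $\mu$ vanishes nowhere on $\Bset$, the cubic $s$ must vanish at the nine nodes of $\Bset \setminus \{B\}$ and satisfy $s(B) \neq 0$. Hence $s$ agrees, up to a scalar that can be absorbed into the linear forms $\alpha_i$, with the unique fundamental polynomial $p_{B,\Bset}^\star$ of $B$ within the $GC_3$ set $\Bset$, which establishes \eqref{ban}. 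Since the $GC_3$-property of $\Bset$ forces $p_{B,\Bset}^\star$ to split into three linear factors, the six lines used by $B$ within $\Xset$ are precisely $\alpha_1,\alpha_2,\alpha_3$ together with the three lines used by $B$ within $\Bset$.

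For part (ii) I will invoke Proposition~\ref{prp:6810}. Part (i) already shows that each of the three $6$-node lines $\alpha_1,\alpha_2,\alpha_3$ is used by all $10$ nodes of $\Bset$. In particular each $\alpha_i$ is used by at least eight nodes, so Proposition~\ref{prp:6810} forces $\alpha_i$ to be used by exactly ten nodes of $\Xset$. The ten using nodes therefore already exhaust $\Bset$, leaving no room for any node of $\Aset$ to use $\alpha_i$, as claimed.

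The proof is essentially a clean chain of applications of the results collected in the introduction, so the main obstacle is not any deep step but rather verifying that the numerical counts line up: the identity $d(6,3) = 18$ makes $\mu$ maximal, the corresponding $n-k = 3$ forces $\Bset$ to be $GC_3$, and the threshold ``used by at least eight'' needed in Proposition~\ref{prp:6810} is exactly what part (i) delivers. The only minor subtlety is the scalar normalization when identifying $s$ with $p_{B,\Bset}^\star$, which is harmless since each linear form $\alpha_i$ is defined only up to a nonzero constant.
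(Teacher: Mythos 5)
Your proof is correct. Part (i) is essentially the paper's argument in a slightly repackaged form: the paper shows $\Bset$ is $3$-correct by noting that otherwise $\alpha_1\alpha_2\alpha_3\gamma_0\in\Pi_6$ would annihilate $\Xset$, and then ``readily obtains'' \eqref{ban}; your route through Definition~\ref{def:maximal}, Proposition~\ref{maxcor} and the equivalence \eqref{maxmax} packages the same counting ($18=d(6,3)$) and the same factorization, and your remark on the normalizing scalar is the right level of care. Part (ii) is where you genuinely diverge. The paper gives a short, self-contained argument: if $A\in\alpha_1$ used $\alpha_2$, then $p_A^\star=\alpha_2 q$ with $q\in\Pi_5$ having $(6,5)$ primary zeros on $(\alpha_3,\alpha_1)$, so Corollary~\ref{cor22} forces $p_A^\star=\alpha_1\alpha_2\alpha_3 r$, contradicting $p_A^\star(A)=1$ since $A\in\alpha_1$. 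You instead invoke Proposition~\ref{prp:6810}: each $\alpha_i$ is a $6$-node line (the intersection points of $\Lset_3$ are not nodes, precisely because there is no maximal line) already used by the ten nodes of $\Bset$ via \eqref{ban}, so it is used by exactly ten nodes and $\Aset$ contributes none. This is valid and there is no circularity, since Proposition~\ref{prp:6810} is established before this lemma; but note that your argument leans on the standing no-maximal-line assumption and on the machinery behind Proposition~\ref{prp:6810} (Corollary~\ref{cor:3n-1}, the classification of dependent sets), whereas the paper's version of (ii) is an elementary one-line Bezout-type factorization that would survive outside that context. Both are sound; the paper's is lighter, yours trades locality for reuse of an already-proved global count.
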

\vspace{-.2cm}

\begin{figure}
\begin{center}
\includegraphics[width=10.0cm,height=5.cm]{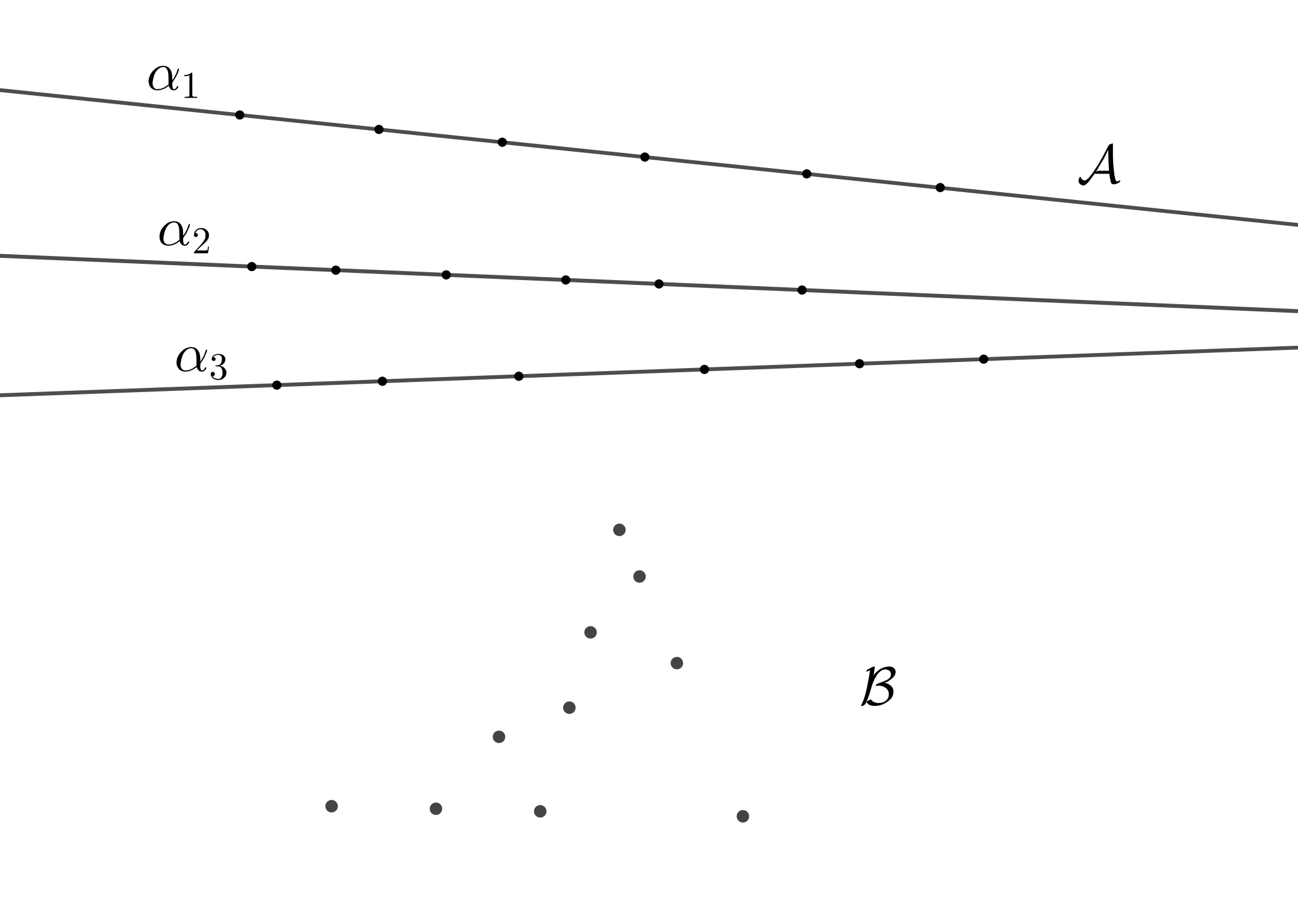}
\end{center}
\caption{The case
$(6,6,6,4,3,2)$ with $\Xset=\Aset\cup\Bset.$} \label{case1}
\end{figure}

\begin{proof}
(i) Suppose by way of contradiction that the set $\Bset$ is not $3$-correct, i.e., it is a subset of a cubic $\gamma_0.$ Then $\Xset$ is a subset of the zero set of the polynomial $\alpha_1\alpha_2\alpha_3 \gamma_0\in \Pi_6,$ which contradicts Proposition \ref{correctii}.

Now, we readily obtain the formula \eqref{ban}.

(ii) Without loss of generality assume that $A \in \alpha_1$ uses the line $\alpha_2.$ Then $p_{A}^\star = \alpha_2\,q,$ where $q \in \Pi_5.$ It is easily seen that $q$ has (6,5) primary zeros in the lines $(\alpha_3 , \alpha_1).$ Therefore, in view of Corollary \ref{cor22}, we obtain that $p_{A}^\star =\alpha_1\alpha_2\alpha_3r,\ r \in \Pi_3,$ which is a contradiction.
\end{proof}

\begin{lemma} \label{lem:26664}
No node from $\Aset$ can have the m-d sequence $(6,6,6,4,3,2).$
\end{lemma}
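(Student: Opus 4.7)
My plan is a proof by contradiction. I would assume that some $A\in\Aset$ has m-d sequence $(6,6,6,4,3,2)$, and let $(\beta_1,\ldots,\beta_6)$ denote a corresponding m-line sequence. By Lemma \ref{lem:Bset}(ii), no $\beta_i$ coincides with any $\alpha_j$. Since $k_1=k_2=k_3=6$ and $\Xset$ carries no $7$-node line, the lines $\beta_1,\beta_2,\beta_3$ are pairwise disjoint on $\Xset$, so $\beta_1\beta_2\beta_3$ is a maximal cubic containing $18$ nodes. By \eqref{maxmax}, $\Bset':=\Xset\setminus(\beta_1\cup\beta_2\cup\beta_3)$ is a $GC_3$ set of $10$ nodes, and the proof of Lemma \ref{lem:Bset} applies verbatim with $(\beta_1,\beta_2,\beta_3)$ in place of $(\alpha_1,\alpha_2,\alpha_3)$: every node of $\Bset'$ uses each $\beta_i$, and no node of $\Xset\cap(\beta_1\cup\beta_2\cup\beta_3)$ uses any $\beta_j$. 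Since $A\notin\beta_i$ for every $i$, we have $A\in\Bset'$.

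Next I would extract a node $B\in\Bset\cap\Bset'$. The polynomial $\alpha_1\alpha_2\alpha_3\beta_1\beta_2\beta_3\in\Pi_6$ is nonzero and vanishes on $\Aset$ together with $\Xset\cap(\beta_1\cup\beta_2\cup\beta_3)$; by Proposition \ref{correctii} it cannot vanish on all of $\Xset$, so some node $B$ lies outside both cubics. Applying Lemma \ref{lem:Bset}(i) and its $\beta$-analog to $B$ gives
\begin{equation*}
  p_{B,\Xset}^{\star}=\alpha_1\alpha_2\alpha_3\,p_{B,\Bset}^{\star}=\beta_1\beta_2\beta_3\,p_{B,\Bset'}^{\star},
\end{equation*}
and since the $\alpha$'s and $\beta$'s are disjoint as linear factors, unique factorisation forces $p_{B,\Bset}^{\star}$ to be proportional to $\beta_1\beta_2\beta_3$. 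Therefore the six lines used by $B$ in $\Xset$ are exactly the six $6$-node lines $\alpha_1,\alpha_2,\alpha_3,\beta_1,\beta_2,\beta_3$.

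To finish, I would compute $B$'s m-d sequence. Each $\beta_i$ ($i=1,2,3$) meets each $\alpha_j$ in at most one point, so $|\beta_i\cap\Aset|\le 3$ and hence $|\beta_i\cap\Bset|\ge 3$. Because $\beta_1,\beta_2,\beta_3$ share no common $\Xset$-node pairwise,
\begin{equation*}
  \sum_{i=1}^{3}|\beta_i\cap\Bset|=\Bigl|\bigcup_{i=1}^{3}(\beta_i\cap\Bset)\Bigr|\le|\Bset\setminus\{B\}|=9,
\end{equation*}
which forces $|\beta_i\cap\Bset|=3$ for each $i$ and, in particular, $\beta_i\cap\alpha_j\in\Xset$ for all $i,j$. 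The ordering $(\alpha_1,\alpha_2,\alpha_3,\beta_1,\beta_2,\beta_3)$ of $B$'s used lines then produces primary counts $(6,6,6,3,3,3)$, while any ordering placing both an $\alpha$ and a $\beta$ among the first three lines produces a strictly smaller tuple in lexicographic order, since the first line of the second type meets the previously chosen lines in $\Xset$-nodes. Thus $B$'s m-d sequence is $(6,6,6,3,3,3)$, which is one of the two sequences explicitly excluded in Section 2, yielding the required contradiction. The main obstacle I anticipate is the double counting in the previous display: pinning down $|\beta_i\cap\Bset|=3$ simultaneously uses the correctness-based upper bound $|\Bset\cap(\beta_1\cup\beta_2\cup\beta_3)|\le 9$ and the elementary lower bound coming from $|\beta_i\cap\Aset|\le 3$; once this equality is in place, the excluded m-d sequence follows at once.
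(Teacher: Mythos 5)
Your proof is correct and follows essentially the same route as the paper: both arguments produce a node of $\Bset$ lying outside the second maximal cubic $\beta_1\beta_2\beta_3$, show that its fundamental polynomial must be the product of the six $6$-node lines, and derive the contradiction from the $GC_3$ structure of $\Bset$ (the paper phrases this as the node using no maximal line within $\Bset$, you as the excluded m-d sequence $(6,6,6,3,3,3)$ --- the same underlying fact). The only real difference is how the offending node is located: the paper counts nodes of $\Bset$ on the first four lines of $A$'s m-line sequence, whereas you obtain it from Proposition \ref{correctii} applied to the sextic $\alpha_1\alpha_2\alpha_3\beta_1\beta_2\beta_3$ together with unique factorization, which is a clean and equally valid variant.
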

\begin{proof}
Assume conversely that $A\in\Aset$ has the m-d sequence $(6,6,6,4,3,2).$ Denote a respective m-line sequence by $(\alpha_1',\ldots,\alpha_6').$ The lines here, according to Lemma \ref{lem:Bset}, (ii), are different from $\alpha_1,\alpha_2,\alpha_3.$

Denote $\Aset' = \Xset \cap \{\alpha_1' \cup \alpha_2' \cup \alpha_3' \}.$
The three lines
 $\alpha_1',\alpha_2',\alpha_3'$ contain at least $9=3+3+3$ nodes outside of $\gamma:=\alpha_1 \cup \alpha_2 \cup \alpha_3.$ The fourth line $\ell_4'$ contains at least $1=4-3$ node outside of $\gamma$ denoted by $C.$ Since $\#\Bset=10$ we conclude that these four  lines have exactly $10$ nodes in $\Bset$ and $12=4\times 3$ nodes in $\Aset.$ Therefore we obtain that $\Bset\subset\alpha_1' \cup \cdots \cup \alpha_4',$ and
 $C\in\Xset\setminus (\Aset\cup\Aset').$

 This, in view of Lemma \ref{lem:Bset}, (i), implies that
 $p_C^\star = \alpha_1\alpha_2\alpha_3\alpha_1'\alpha_2'\alpha_3'.$

 From here we readily conclude that the node $C$ uses six lines none of which is a maximal line within $\Bset.$ Indeed, we have that $\alpha_i\cap \Bset=\emptyset,\ i=1,2,3,$ and   $|\alpha_i'\cap \Bset|=3,\ i=1,2,3.$ This contradicts  Lemma \ref{lem:Bset}, (i).
\end{proof}

\begin{definition} We say, that a line $\ell$ is a $k_\Aset$-\emph{node} line if it passes through exactly $k$ nodes of $\Aset,\ k=0,1,2,3.$\end{definition}

\begin{lemma}\label{lem01}
(i) Assume that a line $\wti\ell\notin\Lset_3$  does not intersect  a line $\alpha\in\Lset_3$ at a node in $\Xset.$ Then the line
$\wti\ell$ can be used by atmost $1$ node from $\Aset.$
Moreover, this latter node  can belong only to $\alpha.$ \par
(ii) If $\ell$ is $0_{\Aset}$ or $1_{\Aset}$-node line then no node from $\Aset$ uses it.
\par
(iii) If $\ell$ is $2_{\Aset}$-node line then it can be used  by atmost one node from $\Aset.$ 
\end{lemma}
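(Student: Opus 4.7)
The argument will hinge on a single ``six-nodes-on-$\alpha_i$'' obstruction combined with Lemma~\ref{lem:Bset}(ii). Each $\alpha_i \in \Lset_3$ carries six nodes of $\Xset$, so any polynomial in $\Pi_5$ vanishing on all six must (by Proposition~\ref{prp:n+1ell}) have $\alpha_i$ as a factor, while Lemma~\ref{lem:Bset}(ii) forbids any $A \in \Aset$ from using $\alpha_i$. Consequently, if $A \in \Aset$ hypothetically uses a line $\ell \notin \Lset_3$ via $p_A^\star = \ell q$ with $q \in \Pi_5$, then $q$ cannot vanish on all six nodes of any $\alpha_i \not\ni A$ that shares no $\Xset$-node with $\ell$.

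Parts (ii), (iii), and the ``moreover'' clause of (i) will follow at once from this obstruction. For (ii), a $0_{\Aset}$- or $1_{\Aset}$-node line $\ell$ meets at most one $\alpha_i$ in $\Xset$, so some $\alpha_i \not\ni A$ is disjoint from $\ell$ in $\Xset$; then $\alpha_i \mid q$ and $A$ uses $\alpha_i$, contradicting Lemma~\ref{lem:Bset}(ii). For (iii), a $2_{\Aset}$-node line's two $\Aset$-nodes cannot lie on the same $\alpha_i$ (else $\ell = \alpha_i \in \Lset_3$), hence they lie on distinct $\alpha_i, \alpha_j$, and $\ell$ misses the third line $\alpha_k$ of $\Lset_3$ in a node---so (i) applies with $\alpha := \alpha_k$. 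For the ``moreover'' of (i), the same obstruction applied to $\alpha$ shows any $A \in \Aset$ using $\wti\ell$ must lie on $\alpha$.

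For the main assertion of (i), I will suppose two distinct $A_1, A_2 \in \alpha \cap \Aset$ both use $\wti\ell$, with $p_{A_j}^\star = \wti\ell q_j$. Applying the obstruction to the other two lines $\alpha', \alpha'' \in \Lset_3$ forces $\wti\ell$ to meet each in an $\Xset$-node $P' \in \alpha'$, $P'' \in \alpha''$. Moreover, since each linear factor of $q_j$ avoids $\Lset_3$ and hence hits each $\alpha_i$ in at most one $\Xset$-node, the five factors of $q_j$ must be transversals to $\Lset_3$ that partition $\alpha \setminus \{A_j\}$, $\alpha' \setminus \{P'\}$, and $\alpha'' \setminus \{P''\}$ into singletons. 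I then case-split on $k := \#(\wti\ell \cap \Xset) \in \{2,3,4,5,6\}$. For $k = 2$, Proposition~\ref{prp:2nl} immediately contradicts the existence of two users. For $k = 3$, Proposition~\ref{prp:3nl} forces a third user $C$ with common m-d sequence $(6,6,6,4,\wti 3,2)$ or $(6,6,5,5,\wti 3,2)$; Lemma~\ref{lem:26664} excludes the former for $A_1, A_2 \in \Aset$, and if $C \in \Bset$ then by Lemma~\ref{lem:Bset}(i) $C$ uses $\alpha_1, \alpha_2, \alpha_3$, making its m-d begin $(6,6,6,\ldots)$ in conflict with $(6,6,5,5,3,2)$; hence $C \in \Aset \cap \alpha$. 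Now three $\alpha$-nodes $A_1, A_2, C$ use $\wti\ell$ and share four lines $\ell_2, \ldots, \ell_5 \notin \Lset_3$; since each $\ell_i$ avoids $\{A_1, A_2, C\}$ and meets $\alpha$ in at most one node (hence in at most one of the three remaining nodes of $\alpha$), the individual $2$-primary line $\ell_6^{(A_1)}$ of $A_1$ is the only factor of $p_{A_1}^\star$ left to carry both $A_2$ and $C$ on $\alpha \setminus \{A_1\}$, forcing $\ell_6^{(A_1)} = \alpha$---impossible, since $A_1 \in \alpha$ uses $\ell_6^{(A_1)}$. For $k \ge 4$, I will run the parallel analysis using Propositions~\ref{prp:4nl3}, \ref{prp:4nl4}, \ref{prp:5nl}, \ref{prp:6810} and \ref{prp:66710}, with the same m-d sifting (Lemma~\ref{lem:26664} ruling out $(6,6,6,4,3,2)$ for $\Aset$-users, and Lemma~\ref{lem:Bset}(i) forcing any $\Bset$-user of $\wti\ell$ to have m-d starting $(6,6,6,\ldots)$) and the same ``two $\alpha$-nodes on a non-$\alpha$ line'' collinearity obstruction exhausting the configurations where three or more users share $\wti\ell$.

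The main obstacle will be the sub-case $k \ge 4$ in which $\wti\ell$ is used by exactly the two nodes $A_1, A_2$, so that none of the ``forced third user'' propositions applies directly. Here one must exploit the rigid transversal structure of $q_1, q_2$---in particular, the transversal of $q_1$ through $A_2$ and the transversal of $q_2$ through $A_1$ are specific lines of $\Xset$ with tightly constrained $\Bset$-incidence---together with the $GC_3$-structure of $\Bset$ supplied by Lemma~\ref{lem:Bset}(i), to either force an additional user of $\wti\ell$ (reducing to an already-handled case) or extract a direct combinatorial incompatibility.
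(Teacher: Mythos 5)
Your reduction of (ii), (iii) and the ``moreover'' clause of (i) to a single divisibility obstruction is correct and is essentially the paper's logic: if $\wti\ell$ meets no node of $\Xset$ on some $\alpha_i\not\ni A$, then $q=p_A^\star/\wti\ell\in\Pi_5$ vanishes at all six nodes of $\alpha_i$, so $\alpha_i\mid q$ by Proposition~\ref{prp:n+1ell}, and $A$ uses a line of $\Lset_3$, contradicting Lemma~\ref{lem:Bset}(ii). (A node of $\Xset$ lying on a line of $\Lset_3$ is automatically in $\Aset$, so ``misses $\Aset$'' and ``misses $\Xset$'' coincide there; you use this implicitly and it is fine.) Your $k=3$ case via Proposition~\ref{prp:3nl} also goes through, though it is far more laborious than necessary.

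The genuine gap is the main assertion of (i): that two nodes $A_1,A_2\in\alpha\cap\Aset$ cannot both use $\wti\ell$. Your plan --- case-split on $k=\#(\wti\ell\cap\Xset)$ and invoke the shared-line Propositions~\ref{prp:2nl}--\ref{prp:66710} --- only bites when their hypotheses (three, four, five, or eight users) are already satisfied; for $k\ge4$ with exactly the two known users none of them applies, and you concede this ``main obstacle'' without supplying an argument. The missing idea is not a finer analysis of the transversal structure but a linear-algebra step. Pick $C\in\alpha_2\setminus(\wti\ell\cup\Xset)$ and, by Lemma~\ref{lem:mm-1}, a nontrivial $p=ap_{A_1}^\star+bp_{A_2}^\star$ with $p(C)=0$. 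Writing $p=\wti\ell\,q$, the polynomial $q\in\Pi_5$ has $(6,5,4)$ primary zeros in the lines $(\alpha_2,\alpha_3,\alpha_1)$: at least five nodes of $\alpha_2$ off $\wti\ell$ plus the point $C$; at least five primary nodes of $\alpha_3$; and the four nodes of $\alpha_1\setminus\{A_1,A_2\}$, none of which lies on $\wti\ell$ by the hypothesis of (i). Corollary~\ref{cor22} then gives $q=\alpha_1\alpha_2\alpha_3\,r$, so $p$ vanishes on all of $\alpha_1$, in particular at $A_1$ and $A_2$, forcing $a=b=0$ --- a contradiction that is uniform in $k$ and renders the entire case analysis unnecessary. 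Without this step (or a substitute for it) your proof of the main claim of (i), and hence of (iii), is incomplete.
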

\begin{proof}
(i) Without loss of generality assume that $\alpha=\alpha_1$ and $A \in \alpha_2 $ uses $\wti\ell:\
    p_{A}^\star =\wti\ell\,q,\ q \in \Pi_4.$
It is easily seen that $q$ has $(6,5,4)$ primary zeros in the lines $( \alpha_1 ,  \alpha_3 ,  \alpha_2).$ Therefore, in view of Corollary \ref{cor22}, we conclude that
$p_{A}^\star = \wti\ell\, \alpha_1\, \alpha_2\, \alpha_3\,r,\ r\in \Pi_1,$
which is a contradiction.

Now assume conversely that $A,B \in  \alpha_1\cap\Xset$ use the line $\wti\ell.$ Choose a point $C \in  \alpha_2\setminus(\wti\ell\cup\Xset).$ Then, in view of Lemma \ref{lem:mm-1}, choose numbers $a$ and $b,$ with $|a| + |b| \neq 0,$ such that $p(C)=0,$ where $p:=a p_A^\star + b p_B^\star.$ It is easily seen that $p =\wti\ell\,q,\ q \in \Pi_5$ and  the polynomial $q$ has $(6,5,4)$ primary zeros in the lines $( \alpha_2, \alpha_3,\alpha_1).$ Therefore $p = \wti\ell \alpha_1\, \alpha_2\, \alpha_3\,q,$ where $q\in\Pi_2.$ Thus $p(A) = p(B) = 0,$ implying that $a=b = 0$, which is a contradiction.

The items (ii) and (iii) readily follow from (i).
\end{proof}
A node is called an $i_m$-\emph{node}, $i\le 2,$ if it lies in exactly $i$ maximal lines.
\begin{lemma}\label{lem02}
Let $\wti\ell$ be a $3_{\mathcal A}$ type line passing through a $2_m$-node $B\in\Bset.$ Assume also that  the node set $\mathcal B\setminus \{\wti\ell\}$ contains $4$ collinear nodes.  Then the line $\wti\ell$ can be used by at most three nodes from $\Aset.$
\end{lemma}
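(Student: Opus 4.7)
The strategy is a proof by contradiction with a case analysis on $k:=|\wti\ell\cap\Xset|$. Since $\wti\ell$ is $3_\Aset$ and contains $B\in\Bset$ we have $k\ge 4$, and since $\Xset$ has no maximal line $k\le 6$, so $k\in\{4,5,6\}$. Assume $A_1,A_2,A_3,A_4\in\Aset$ all use $\wti\ell$ and write $p_{A_i}^\star=\wti\ell\,q_i$ with $q_i\in\Pi_5$. The four quintics are linearly independent: evaluating $\sum c_j q_j$ at $A_i$ gives $c_i q_i(A_i)=0$ (since $q_j(A_i)=0$ for $j\ne i$ and $\wti\ell(A_i)\ne 0$), forcing $c_i=0$.

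In the case $k=4$, Proposition \ref{prp:4nl4} applies and forces $\wti\ell$ to be used by exactly six nodes, each having m-d sequence $(6,6,6,\wti 4,3,2)$. In particular $A_1\in\Aset$ has this sequence, contradicting Lemma \ref{lem:26664}.

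In the case $k=5$, the set $\wti\ell\cap\Bset=\{B,B'\}$ is a $2$-node line inside the $GC_3$ set $\Bset$. By the general Carnicer--Gasca result on $2$-node lines (of which Proposition \ref{prp:2nl} is the $n=6$ instance) at most one node of $\Bset\setminus\wti\ell$ uses $\wti\ell$ in $\Bset$, and by Lemma \ref{lem:Bset}(i) the same bound transfers to $\Xset$. Thus the total number of users of $\wti\ell$ in $\Xset$ is at most $4+1=5$. If it equals five, Proposition \ref{prp:5nl} would force six users, a contradiction; so the users are exactly $A_1,\dots,A_4$. To eliminate this residual configuration one uses the hypothesis: the line $\beta$ through the four collinear nodes in $\Bset\setminus\wti\ell$ is maximal in the $GC_3$ set $\Bset$ (hence used by all six nodes of $\Bset\setminus\beta$ by \eqref{maxmax}), and $B$ is a $2_m$-node. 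Combining this with a counting argument on the five auxiliary lines of each $p_{A_i}^\star$ (none of which is an $\alpha_j$, by Lemma \ref{lem:Bset}(ii)) jointly covering the $14$ nodes of $\Aset\setminus(\wti\ell\cup\{A_i\})$ through intersections with the three lines of $\Lset_3$, and with the obligation to accommodate $\beta$ and the two maximal $\Bset$-lines through $B$, yields the desired contradiction.

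In the case $k=6$, $\wti\ell$ is a $3$-node line in the $GC_3$ set $\Bset$, and the analog for $GC_3$ of Proposition \ref{prp:3nl} bounds the $\Bset\setminus\wti\ell$-users of $\wti\ell$; combined with Proposition \ref{prp:6810} (which, whenever eight or more nodes use $\wti\ell$, forces ten with m-d $(6,6,6,4,3,2)$, again contradicting Lemma \ref{lem:26664}), the possible total-user counts reduce to a short list, each entry of which is eliminated using the maximality of $\beta$ in $\Bset$ and the $2_m$-property of $B$. The main obstacle will be the subcase of $k=5$ with exactly four users (all in $\Aset$), where the direct m-d contradiction is unavailable and the full force of the two configurational hypotheses on $\beta$ and on $B$ is needed to finish.
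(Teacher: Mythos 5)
Your proposal correctly disposes of the subcase where $\wti\ell$ is a $4$-node line (Proposition \ref{prp:4nl4} forces the m-d sequence $(6,6,6,\wti 4,3,2)$ on a node of $\Aset$, contradicting Lemma \ref{lem:26664}), but the remaining cases --- which are the real content of the lemma --- are not proved. For $k=5$ you reduce to ``exactly four users, all in $\Aset$'' and then write that a ``counting argument on the five auxiliary lines \dots together with the obligation to accommodate $\beta$ and the two maximal $\Bset$-lines through $B$ yields the desired contradiction''; no such argument is carried out, and it is exactly here that the lemma has to be earned. (Your reduction to that subcase is also shaky: the bound ``at most $4+1=5$ users'' presupposes that only four nodes of $\Aset$ use $\wti\ell$, which is what you are trying to refute; and if Proposition \ref{prp:5nl} is triggered, the resulting six users may have m-d sequence $(6,6,5,5,3,2)$, which Lemma \ref{lem:26664} does not exclude, so ``a contradiction'' does not follow as stated.) The $k=6$ case is likewise only gestured at.

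For comparison, the paper's proof is uniform over all these cases and is a genuine argument, not a head count of users. Assuming four users $A_1,\dots,A_4\in\Aset$, one takes a nonzero $p_0$ in their linear span vanishing at three auxiliary points $T_1,T_2,T_3$ (Lemma \ref{lem:mm-1}), writes $p_0=\wti\ell\,q_0$ with $q_0\in\Pi_5$, and distributes the $11$ nodes of $\Cset=\Aset\setminus(\wti\ell\cup\{A_1,\dots,A_4\})$ over $\alpha_1,\alpha_2,\alpha_3$. Placing the $T_i$ judiciously, $q_0$ acquires $(6,5,4,3)$ primary zeros on $\alpha_1,\alpha_2,\alpha_3$ and the line $\ell^*$ through the four collinear nodes of $\Bset\setminus\wti\ell$ (in the one exceptional distribution $(5,5,0)$, on the two maximal $\Bset$-lines through the $2_m$-node $B$ instead of $\ell^*$ --- this is precisely where both hypotheses of the lemma enter). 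Corollary \ref{cor22} then gives $p_0=\alpha_1\alpha_2\alpha_3\gamma$, so $p_0$ lies in the span of the fundamental polynomials of $\Bset$ by Lemma \ref{lem:Bset}(i), contradicting its membership in the span of $p^\star_{A_1},\dots,p^\star_{A_4}$. This divisibility mechanism is the missing idea in your proposal; without it (or an equivalent), the cases $k=5$ and $k=6$ remain open.
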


\begin{proof} Assume by way of contradiction that the line $\wti\ell$ is used by four nodes from a set $\mathcal A_4:=\{A_1,\ldots, A_4\}\subset\Aset.$
For any chosen points $T_i,\ i=1,2,3,$ (see the proof of Proposition \ref{prp:5nl} for the details) there is a polynomial
\begin{equation}\label{p0}p_0\in \mathcal X_4:= linear span \{p^\star_{A_1},\ldots,p^\star_{A_4}\},\quad p_0\neq 0,\end{equation}

\noindent such that $p_0(T_i)=0,\ i=1,2,3.$
On the other hand we have that
$$p_0=\wti\ell q_0,\quad q_0\in\Pi_5,\ \hbox{and}\ q_0(T_i)=0,\ i=1,2,3.$$
Now consider the set of nodes
\vspace{.3cm}

$\qquad\qquad\qquad\mathcal C:=\mathcal A \setminus \left(\wti\ell \cup \mathcal A_4\right),\quad |\mathcal C|=18-3-4=11.$

Denote by $\ell^*$ the line passing through the four collinear nodes of $\mathcal B\setminus \{\wti\ell\}.$

The following cases of distribution of above $11$ nodes in the three lines of
$\Lset$ are possible:

${(1)}\ (5,5,1); \quad {(2)}\ (5,4,2); \quad {(3)}\ (5,3,3); \quad {(4)}\ (4,4,3).$

One can verify that the cases (1)-(4) are not possible in the following way. By locating conveniently the three points $T_i,\ i=1,2,3,$ we make the polynomial $q_0$ to have at least $(6,5,4,3)$ primary zeroes
in the lines  $\alpha_1,\alpha_2, \alpha_3, \ell^*,$ in some order.
Thus we get  $q_0=\alpha_1\alpha_2 \alpha_3\beta,\ \beta\in\Pi_2,$
implying $p_0=\alpha_1\alpha_2 \alpha_3\gamma,\ \gamma\in\Pi_3.$
Hence, in view of Lemma \ref{lem:Bset}, (i), we readily get that $p_0\in linear span \{p^\star_{B,\Xset} : B\in \Bset\},$ which contradicts \eqref{p0}.

To implement the above described verification in details suppose that
$$|\ell^*\cap \mathcal C|=k,\  k\le 2.$$

  {\bf Case 1: $k=0.$}
In the case of distribution (1), $(5,5,1),$ we add the three points in the form $(1,0,2),$ meaning that we add a point to the line $\alpha_1$ and the remaining two points to the line $\alpha_3.$
In the case of distributions (2)-(4) we add the three points in the form  $(1,1,1),\ (1,2,0),$ $(2,1,0),$ respectively.

Then note that the polynomial $q_0$ has at least $(6,5,4,3)$ primary zeroes
in the lines  $\alpha_1,\alpha_2, \ell^*, \alpha_3,$ in the indicated order.

 {\bf Case 2: $k=1.$}
 In this case a node denoted by $A^*$ in $\Cset$  belongs to the line $\ell^*.$
 The following are the cases of distribution of remaining $10$ nodes of $\Cset$ in the lines of $\Lset$:

${(1')}\ (5,5,0); \quad {(2')}\ (5,4,1); \quad {(3')}\ (5,3,2); \quad {(4')}\ (4,4,2); \quad {(5')}\ (4,3,3).$

  Consider the distribution sequence $(1'),\ (5,5,0).$ In this case we have that $\Aset_4 \cup \{A^*\}\subset \alpha_3.$
Note that this is the only case when instead of $\ell^*$ we use the two maximal lines passing through $B\in\Bset,$ denoted by  $\ell^{**}_1$ and $\ell^{**}_2$. Each of these lines passes through $3$ nodes in $\Bset\setminus\wti\ell.$
Note that these lines do not pass through $A^*$ since they intersect $\ell^*$ at $B\in\Bset.$ 

Thus in case  ${(1')}$ we add a point to the line $\ell^{**}_1.$ 
Then we add the remaining two points to the lines of $\Lset$ in the form  $(1,0,1).$  Now note that the polynomial $q_0$ has at least $(6,5,4,3,2)$ zeroes in the following ordered lines: $\alpha_1,  \alpha_2, \ell^{**}_1,\ell^{**}_2,\alpha_3,$ counting also $A^*\in\alpha_3.$

In the remaining cases $(2')-(5')$ we add a point to the line $\ell^*$ to have there $6$ zeroes and use it as the first line in the ordered line sequence.
Then we add the remaining two points to the lines of $\Lset$ in the form  $(0,0,2), (0,1,1),\ (1,0,1)\\ (1,1,0),$ respectively.

Finally notice that the polynomial $q_0$ has at least $(6,5,4,3)$ zeroes in the ordered lines: $\ell^*, \alpha_1,  \alpha_2, \alpha_3.$

 {\bf Case 3: $k=2.$}
 In this case two nodes of $\Aset$ belong to the line $\ell^*.$
 The following are the cases of distribution of the remaining $9$ nodes of $\Cset$ in the three lines of
$\Lset$:

\noindent ${(1'')} (5,4,0); \ {(2'')} (5,3,1);\ {(3'')} (5,2,2); \ {(4'')} (4,4,1);\ {(5'')} (4,3,2);\ {(6'')} (3,3,3).$

In these cases the line $\ell^*$ has $6$ zeroes and is the first line in the ordered sequence of lines.
For the distributions $(1'')-(6'')$ we add the three points in the form  $(0,0,3), (0,1,2),\ (0,2,1)\ (1,0,2),\ (2,1,0),$ respectively.

Then note that as above the polynomial $q_0$ has at least $(6,5,4,3)$ zeroes in the following ordered lines: $\ell^*, \alpha_1,  \alpha_2, \alpha_3.$
\end{proof}

 \subsection{The proof of the main result}

Consider all the lines passing through a node $B\in\Bset$ and at least one more node of ${\mathcal X}$. Denote the set of these lines by $\Lset(B).$  Let $m_k:=m_k(B),\ k=1,2,3,$ be the number of $k_{\Aset}$-node lines from $\Lset(B).$
Then the following holds:
\begin{equation}\label{eq18}
  1m_1(B) + 2m_2(B) + 3m_3(B)   =
  \# \Aset = 18 .
\end{equation}
 \begin{lemma}\label{lem:m36} We have that
 $m_3(B) \leq 5.$
 \end{lemma}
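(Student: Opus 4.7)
The plan is to argue by contradiction: suppose $m_3(B)\ge 6$ and exhibit six lines through $B$ whose product is forced (by maximality of $\alpha_1\alpha_2\alpha_3$) to have one of the $\alpha_i$ as a linear factor, which is impossible since $B\notin\alpha_i$.

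First I would pin down the combinatorics of a $3_{\Aset}$-node line through $B$. Any line $\ell\in\Lset(B)$ different from each $\alpha_i$ meets $\alpha_i$ in exactly one point, so such a line can contain at most three nodes of $\Aset$ (one from each $\mathcal X_i:=\Xset\cap\alpha_i$), and it is a $3_{\Aset}$-node line precisely when it meets every $\alpha_i$ at a node of $\mathcal X_i$. Assuming $m_3(B)\ge 6$, take six distinct such lines $\ell^{(1)},\dots,\ell^{(6)}$. Two distinct lines through $B$ cannot share a point on any $\alpha_i$ (they would then coincide), so the six lines cut out six distinct nodes on each $\alpha_i$; since $|\mathcal X_i|=6$, together they cover all of $\Aset$.

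Next I would consider the polynomial
\[
P:=\ell^{(1)}\ell^{(2)}\cdots\ell^{(6)}\in\Pi_6,
\]
which vanishes on every node of $\Aset$. The key observation is that $\mu:=\alpha_1\alpha_2\alpha_3$ is a maximal curve of degree $3$ for the $6$-independent set $\Xset$: indeed $|\mu\cap\Xset|=|\Aset|=18=d(6,3)$. Therefore Proposition~\ref{maxcor} applies and gives
\[
P=\alpha_1\alpha_2\alpha_3\,R,\qquad R\in\Pi_3.
\]
By unique factorization in the polynomial ring, each $\alpha_i$ must coincide (up to a scalar) with some $\ell^{(j)}$. But every $\ell^{(j)}$ passes through $B$, whereas $B\notin\alpha_i$ by construction of $\Aset$ and $\Bset$. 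This contradiction yields $m_3(B)\le 5$.

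The argument has no real obstacle: the only thing to be careful about is the correct application of Proposition~\ref{maxcor}, namely verifying that $\alpha_1\alpha_2\alpha_3$ is indeed maximal, which is immediate from $d(6,3)=18=\#\Aset$. The remaining steps (distinctness of the intersection points on each $\alpha_i$, unique factorization, and the location of $B$) are straightforward.
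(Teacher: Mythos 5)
Your proposal is correct and follows essentially the same route as the paper: six distinct $3_{\Aset}$-node lines through $B$ would cover all $18$ nodes of $\Aset$, and since $\alpha_1\alpha_2\alpha_3$ is a maximal cubic ($18=d(6,3)$), Proposition~\ref{maxcor} forces the product of the six lines to contain each $\alpha_i$ as a component, contradicting $B\notin\alpha_i$. Your write-up merely makes explicit the intermediate details (distinctness of the intersection points, unique factorization) that the paper leaves implicit.
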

 \begin{proof}
 The relation \eqref{eq18} implies that $m_3(B) \leq 6.$ Assume by way of contradiction that six lines pass through $B$ and three nodes in $\Aset.$
Therefore these six lines intersect the three lines $\alpha_1, \alpha_2, \alpha_3,$ at all the $18$ nodes of $\Aset.$

\noindent Note that $\alpha_1\alpha_2\alpha_3$ is a maximal cubic. Hence, by Proposition \ref{maxcor}, the six lines contain as components the lines $\alpha_1, \alpha_2, \alpha_3,$ which is a contradiction.
 \end{proof}
\emph{The proof of Proposition \ref{thm:main3}.}
  We will prove Proposition in   three steps. Recall that the set $\Bset$ is a $GC_3$ set.

\emph{\bf Step 1.} The set $\Bset$ is a Chung-Yao set (with $5$ maximal lines, Fig. \ref{case1}).

Let us fix as a  node $B\in\Bset.$ Note that all nodes in this case are $2_m$-nodes.
According to Lemma \ref{lem02} any $3_\Aset$ type line $\wti\ell$ here is used by at most $3$ nodes of $\Aset.$ Indeed,  $\wti\ell$ passes through at most two nodes of $\Bset.$ Thus it intersects at most $4=2\times 2$ maximal lines of $\Bset$ and the four nodes of the fifth maximal line of $\Bset$  are outside of $\Bset\setminus \wti\ell$ (see Fig. \ref{case1}).

Therefore, in view of Lemma \ref{lem01},  the number of usages of the lines $\wti\ell$ through $B$ with the nodes from $\Aset$ equals at most:
$$m_2(B)+3m_3(B) \ge 18 =m_1(B) +2m_2(B)  + 3m_3(B).$$

Hence
$m_1=m_2=0$ and $m_3=6,$ which contradicts Lemma \ref{lem:m36}.

\emph{\bf Step 2.} $\Bset$ is a Carnicer-Gasca set (with $4$ maximal lines, Fig. \ref{case2}).

We have that there are at most four $3$-node lines in $\Bset$ (\cite{GV1}, Prop. 5). Moreover, any $3$-node line passes through $1_m,\ 1_m,\ 1_m,$ or through $2_m,\ 1_m,\ 1_m,$ nodes \cite{GV1}.
There are exactly six  $2_m$-nodes in $\Bset.$ Therefore we have at least two  nodes in $\Bset$, denoted by $B_0$ and $B_1,$ through which no $3$-node line passes.

Denote the line passing through the nodes $B_0$ and $B_1,$ by $\ell_{01}.$
\begin{figure}
\begin{center}
\includegraphics[width=10.0cm,height=5.cm]{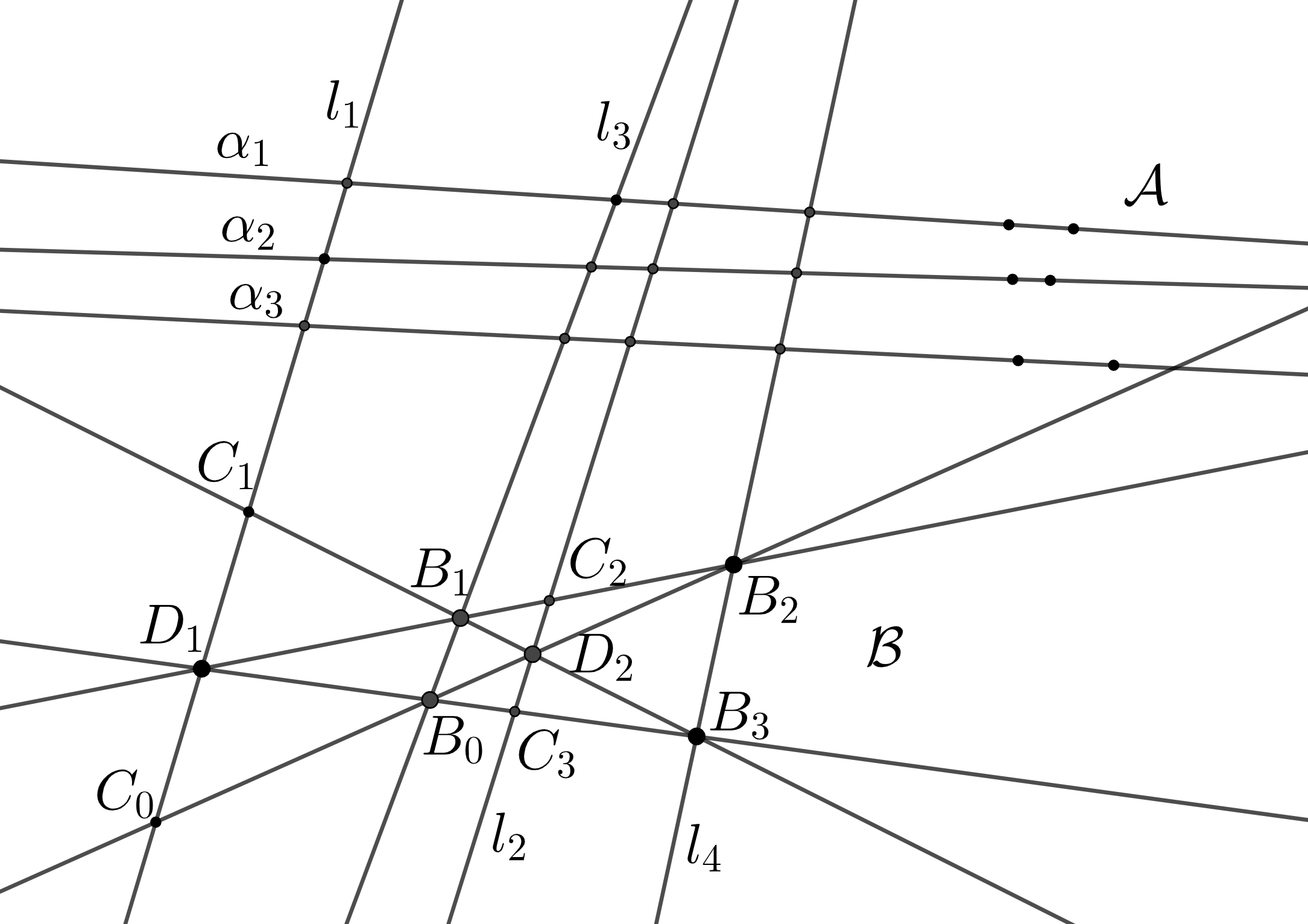}
\end{center}
\caption{The set $\Bset$ is a Carnicer-Gasca set} \label{case2}
\end{figure}
\begin{lemma}\label{lem:B0} The line $\ell_{01}$  is a $3_\Aset$ type $5$-node line and is used by exactly six nodes from $\Aset.$
\end{lemma}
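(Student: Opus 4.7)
The plan combines the Carnicer--Gasca structure of $\Bset$ with Propositions \ref{prp:kell} and \ref{prp:5nl}. I proceed in three parts: identify which nodes can possibly use $\ell_{01}$, establish the $\Aset$-type of $\ell_{01}$, and invoke Proposition \ref{prp:5nl} to finish.

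First, since no $3$-node line of $\Bset$ passes through $B_0$ or $B_1$, the line $\ell_{01}$ meets $\Bset$ only at $\{B_0,B_1\}$ and passes through no $1_m$-node of $\Bset$. Because $\Bset$ is a Carnicer--Gasca $GC_3$ set with four maximal lines $a_1,\ldots,a_4$, the $\Bset$-fundamental polynomial of each of its nodes factors uniquely as a product of three prescribed lines (a $1_m$-node $D_r$ uses the three $a_s$ with $s\ne r$; a $2_m$-node $B_{rs}$ uses the two $a_p,a_q$ with $\{p,q\}=\{1,2,3,4\}\setminus\{r,s\}$ together with the line $m_{rs}$ joining $D_r$ and $D_s$). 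The line $\ell_{01}$ is none of these canonical lines, so by uniqueness of the $GC$-factorization no node of $\Bset$ uses $\ell_{01}$; hence every user of $\ell_{01}$ must lie in $\Aset$.

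Second, I would show that $\ell_{01}$ is $3_\Aset$-type. After a counting argument on the at-most-four $3$-node lines of $\Bset$ (each carrying at most one $2_m$-node) that allows me to choose $B_0,B_1$ in complementary position $B_0=B_{12}$, $B_1=B_{34}$, I assume for contradiction that $\ell_{01}$ is $k_\Aset$-type with $k\le 2$. By Lemma \ref{lem01}(ii)--(iii), $\ell_{01}$ has at most one user in $\Aset$ and thus at most one in $\Xset$, so $|\Nset_{\ell_{01}}|\ge 28-(2+k)-1\ge 23$; by Proposition \ref{prp:kell}(i) the set $\Nset_{\ell_{01}}$ is essentially $5$-dependent. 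A contradiction would be derived by showing that a carefully chosen $1_m$-node $D_r\in\Nset_{\ell_{01}}$ admits an explicit $5$-fundamental polynomial in $\Nset_{\ell_{01}}$: one constructs a product of five lines of the form $a_p\cdot \alpha_{j_1}\alpha_{j_2}\cdot \ell^*$ (or a closely related split-into-lines expression) using the maximal cubic $\mu_3=\alpha_1\alpha_2\alpha_3$, the maximal lines $a_s$ of $\Bset$, and an auxiliary line $\ell^*$ designed to isolate $D_r$ from the remaining $\Bset$-nodes of $\Nset_{\ell_{01}}$. This contradiction forces $k=3$, so $\ell_{01}$ is a $3_\Aset$-type $5$-node line.

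Third, with $\ell_{01}$ being $3_\Aset$-type $5$-node, I would apply Proposition \ref{prp:5nl}. Using the pencil
\[
V \;=\; \mathrm{span}\{p_{B_0}^\star,p_{B_1}^\star\} \;=\; \alpha_1\alpha_2\alpha_3 \cdot \bigl\{c_0\,a_3 a_4 m_{12}+c_1\,a_1 a_2 m_{34}\bigr\}
\]
together with Lemma \ref{lem:mm-1} applied at the three nodes $A_j=\ell_{01}\cap\alpha_j$, one constructs five distinct $\Aset$-node users of $\ell_{01}$; Proposition \ref{prp:5nl} then upgrades this to exactly six users, all in $\Aset$ by the first part, with the stated m-d sequences. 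The main obstacle is the second part: essential $5$-dependence on $\ge 23$ nodes lies outside the immediate reach of Theorem \ref{th:3n} or Corollary \ref{cor:3n-1}, so the required contradiction must be produced by an explicit geometric construction of a degree-$5$ interpolant exploiting the maximal cubic of $\Aset$ and the maximal lines of $\Bset$.
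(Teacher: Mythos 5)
There is a genuine gap: your plan never produces a working mechanism for the heart of the lemma, namely that $\ell_{01}$ is used by \emph{many} nodes of $\Aset$. The paper's proof is a double-counting argument on the lines through $B:=B_0$. Since $p_A^\star$ vanishes at $B_0$ for every $A\in\Aset$, each of the $18$ nodes of $\Aset$ uses at least one line of $\Lset(B_0)$, so the total number of usages is at least $18=m_1(B)+2m_2(B)+3m_3(B)$ by \eqref{eq18}. On the other hand, no $6$-node line passes through $B_0$ (no $3$-node line of $\Bset$ does), Lemma \ref{lem01} caps $0_\Aset,1_\Aset,2_\Aset$ lines at $0,0,1$ usages, Lemma \ref{lem02} caps every $5$-node line through $B_0$ other than $\ell_{01}$ at $3$ usages, and if $\ell_{01}$ were a $\le 4$-node line or a $5$-node line with fewer than six users it would contribute at most $4$ (Lemma \ref{lem01}, Proposition \ref{prp:5nl}). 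Comparing the two counts gives $m_1+m_2\le 1$, hence $m_3\ge 6$, contradicting Lemma \ref{lem:m36}. Your proposal uses neither the identity \eqref{eq18} nor Lemma \ref{lem02}, and the two substitutes you offer do not close.

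Concretely: in your second part, Proposition \ref{prp:kell} \emph{guarantees} that $\Nset_{\ell_{01}}$ is essentially $5$-dependent, so the only way to a contradiction is an explicit $5$-fundamental polynomial inside $\Nset_{\ell_{01}}$ — and your candidates do not exist as written. The expression $a_p\,\alpha_{j_1}\alpha_{j_2}\,\ell^*$ has degree $4$ and cannot cover the six $\Aset$-nodes on the omitted $\alpha_{j_3}$; and a quintic of the form $\alpha_1\alpha_2\alpha_3\,a_p a_q$ misses \emph{three} nodes of the Carnicer--Gasca set $\Bset$ (the node $a_r\cap a_s$ and the two $1_m$-nodes on $a_r,a_s$), not one, so it isolates nothing without additional unproved information about which $\Bset$-nodes lie on or use $\ell_{01}$. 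In your third part, the pencil $\mathrm{span}\{p_{B_0}^\star,p_{B_1}^\star\}$ consists of polynomials of the form $\alpha_1\alpha_2\alpha_3\cdot(\text{cubic built from maximal lines of }\Bset)$; none of these is divisible by $\ell_{01}$ and none is a fundamental polynomial of a node of $\Aset$, so it cannot supply the five users that Proposition \ref{prp:5nl} takes as a hypothesis. As it stands the proposal establishes neither that $\ell_{01}$ is a $5$-node line nor that it has five users, which are exactly the two claims the lemma requires.
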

\begin{proof} Assume by way of contradiction that $\ell_{01}$ is a $\le 4$-node line. Then, in view of Lemma \ref{lem01}, it is used $\le 1$ times from $\Aset.$ On the other hand if $\ell_{01}$ is a $5$-node line but is not used six times from $\Aset$ then, according to Propostion
\ref{prp:5nl}, it  is used $\le 4$ times from $\Aset.$

Note that there is no $6$-node line through $B:=B_0,$ since there is no $3$-node line through it in $\Bset.$
Next, by Lemma \ref{lem02},  any $5$-node line through $B,$ except of $\ell_{01},$ is used by $\le 3$ nodes from $\Aset$ (see Fig. \ref{case2}).

Thus, in view of Lemma \ref{lem01},  we have that the number of usages of the lines through $B$ with the nodes from $\Aset$ equals at most:
$$m_2(B)+4+3(m_3(B)-1) \ge 18 =m_1(B) +2m_2(B)  + 3m_3(B).$$
Hence
$m_1+m_2\le 1,$ and $m_3\ge 6,$
which contradicts Lemma \ref{lem:m36}.
\end{proof}
Denote by $\Aset_6\subset\Aset$ the set of six nodes that are using the line $\ell_{01}.$

We get from Proposition \ref{prp:5nl} that   the six nodes of $\Aset_6$ besides $\ell_{01},$ share also
three other lines  passing through $6,6,5$ primary nodes, respectively. Furthermore the m-d sequence for the six nodes is $(6,6,6,4,3,2),$ or $(6,6,5,5,3,2).$

Lemma \ref{lem:26664} implies that the first case cannot take place.
Thus the m-d sequence for all nodes of $\Aset_6$ is  $(6,6,5,5,3,2).$
Note that, in view of \eqref{maxmax}, $\Aset_6$ is a $GC_3$ set, since $\ell_1\cdots\ell_4$ is a maximal quartic.

Let $\ell_1, \ldots, \ell_6$ be a respective m-line sequence, where $\ell_3:=\ell_{01}.$ Note that $\ell_1, \ldots, \ell_4$ are invariable lines, and $\ell_5,\ell_6$ are variable lines, for the nodes of $\Aset_6.$ The lines $\ell_1, \ldots, \ell_4$ have at least $10=3+3+2+2$ distinct nodes in $\Bset.$ Since $\#\Bset=10$ we conclude that these four  lines have exactly $10$ nodes in $\Bset$ and $12=4\times 3$ nodes in $\Aset$ (see Fig. \ref{case2}).

 Recall that we have four maximal lines in $\Bset$
which are $4$-node lines and intersect in primary nodes of $\Xset$ each of the lines $\ell_1, \ldots, \ell_4.$ The nodes $B_0$ and $B_1$ are in the line $\ell_3.$ Denote by $B_2,B_3$ the two primary nodes in  $\ell_4\cap\Bset.$

We readily conclude that $B_i,\ i=0,\ldots,3,$ are $2_m$-nodes of $\Bset$ and the $4$ maximal lines are the lines $\ell_{02}, \ell_{03}, \ell_{12}, \ell_{13},$ where $\ell_{ij}$ is the line passing through the nodes $B_i$ and $B_j$ (see Fig. \ref{case2}).

 The remaining two $2_m$-nodes of
$\Bset$ are the remaining two intersection points of the maximal lines denoted by $D_1:=\ell_{03}\cap\ell_{12}$ and $D_2:=\ell_{02}\cap\ell_{13}.$

\noindent The nodes $D_1$ and $D_2$ one by one lie in the lines $\ell_1$ and $\ell_2,$  respectively, since the latters  are $3$-node lines within $\Bset$ and each contains at most  one $2_m$-node.

\begin{lemma}\label{lem:B2B3}  The following is true for at least one of $B\in\{B_2,B_3\}:$

\noindent No $3$-node line within $\Bset$ passes through the node $B.$
\end{lemma}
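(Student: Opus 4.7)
The plan is to argue by contradiction: assume both $B_2$ and $B_3$ lie on 3-node lines within $\Bset$, and then derive an algebraic identity with no real solution.

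First, I would classify the possible 3-node lines through each $2_m$-node of $\Bset$. Since $\Bset$ is a Carnicer-Gasca $GC_3$ set with four maximal lines $\ell_{02},\ell_{03},\ell_{12},\ell_{13}$ meeting in the six $2_m$-nodes $B_0,B_1,B_2,B_3,D_1,D_2$, each maximal line also carries exactly one $1_m$-node, which I denote by $E_{02},E_{03},E_{12},E_{13}$ respectively. For a $2_m$-node $P=\ell_{ij}\cap\ell_{kl}$, any 3-node line through $P$ distinct from $\ell_{ij},\ell_{kl}$ must contain two more nodes of $\Bset$ lying outside $\ell_{ij}\cup\ell_{kl}$; a routine check that rules out pairs whose spanning line would coincide with a maximal line not through $P$ shows that the candidate is uniquely determined:
\begin{align*}
B_0 &\to \{B_0,E_{12},E_{13}\}, & B_1 &\to \{B_1,E_{02},E_{03}\},\\
B_2 &\to \{B_2,E_{03},E_{13}\}, & B_3 &\to \{B_3,E_{02},E_{12}\},\\
D_1 &\to \{D_1,E_{02},E_{13}\}, & D_2 &\to \{D_2,E_{03},E_{12}\}.
\end{align*}
Since $\ell_1,\ell_2$ are 3-node lines of $\Bset$ through $D_1$ and $D_2$ respectively (as established just before the lemma), they must coincide with $\{D_1,E_{02},E_{13}\}$ and $\{D_2,E_{03},E_{12}\}$; and under the contradictory hypothesis we obtain two further collinearities, $\{B_2,E_{03},E_{13}\}$ and $\{B_3,E_{02},E_{12}\}$.

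To finish, I would fix a projective frame adapted to the complete quadrilateral $\ell_{02}\cup\ell_{03}\cup\ell_{12}\cup\ell_{13}$, namely $\ell_{02}{:}\,x=0$, $\ell_{03}{:}\,y=0$, $\ell_{12}{:}\,z=0$, $\ell_{13}{:}\,x+y+z=0$, so that $D_1=(1{:}0{:}0)$, $B_2=(0{:}1{:}0)$, $D_2=(0{:}{-1}{:}1)$, $B_3=(-1{:}0{:}1)$. Writing $E_{02}=(0{:}a{:}b)$, $E_{03}=(c{:}0{:}d)$, $E_{12}=(e{:}f{:}0)$, and $E_{13}=(s{:}t{:}u)$ with $s+t+u=0$, expansion of the four collinearity determinants produces
\[
au=bt,\qquad cf=de,\qquad ds=cu,\qquad bf=ae.
\]
Setting $k:=d/c$, the last two relations yield $a/b=k$; the first then gives $t/u=k$, and the third gives $s/u=1/k$. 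Substituting into $s+t+u=0$ forces $u(1/k+k+1)=0$, hence $k^2+k+1=0$, which has no real root. The main obstacle is the algebraic bookkeeping required to set up the projective frame and simplify the four determinants, but the conceptual content is clean: the four simultaneous collinearities encode a ``primitive cube root of unity'' constraint on the complete quadrilateral, unrealizable over~$\mathbb{R}$.
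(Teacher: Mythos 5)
Your proposal is correct, but it takes a genuinely different route from the paper. The paper observes that the three sets $\Xset_1=\ell_1\cap\Bset$, $\Xset_2=\ell_2\cap\Bset$ (each of size $3$) and $\Xset_3=\{B_2,B_3\}$ would, under the contradictory hypothesis, be transversed by six lines --- the four maximal lines of $\Bset$ together with the two hypothesized $3$-node lines --- each meeting one point of each $\Xset_i$; this contradicts the quoted estimate $M_{3,3,2}=5$ from \eqref{M332}, and the proof is two lines long. You instead re-derive the needed incidence obstruction from scratch: your classification of the unique candidate $3$-node line through each $2_m$-node agrees with the paper's (and with its identification of $\ell_1,\ell_2$ as the lines through $D_1,D_2$), and your projective normalization of the complete quadrilateral is legitimate since no three maximal lines of $\Bset$ are concurrent; I have checked all four collinearity determinants and the resulting constraint $k^2+k+1=0$, which indeed has no real root. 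Your approach buys self-containedness (no appeal to the external $M_{3,3,2}$ estimate, and no need to verify the side condition that none of the eight points is an intersection point of the three carrier lines), at the cost of coordinate bookkeeping. Two small points to tidy up: the deduction $a/b=k$ uses the \emph{second and fourth} relations ($cf=de$ gives $f/e=k$, then $bf=ae$ gives $a/b=f/e$), not ``the last two''; and you should note explicitly that $b,c,e,u\neq 0$ (which holds because each $E_{ij}$ is distinct from the $2_m$-nodes on its maximal line) so that the divisions are licit.
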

\begin{proof} Consider the node $B_2.$ We have that
$\Xset\setminus (\ell_{02}\cup\ell_{12})=\{B_3,C_1,C_3\}.$
Since $\ell_{23}$ is a $2$-node line in $\Bset$ we get that the only candidate for $3$-node line through $B_2$ is the line passing through the nodes $B_2,C_1,C_3,$ provided that the latter triple of nodes is collinear (see Fig. \ref{case2}).

Similarly we get that the only candidate for a $3$-node line through $B_3$ is the line passing through the nodes $B_3,C_0,C_2,$ provided they are collinear.

What we need to show is that at least one of the two triples of nodes is not collinear.
Assume by way of contradiction that the both triples are collinear, lying in some two lines $\ell_0$ and $\ell_0',$ respectively.

Consider the following collinear sets
$$\Xset_1:=\{C_0,D_1,C_1\},\ \Xset_2:=\{C_2,D_2,C_3\},\ \Xset_3:=\{B_2,B_3\}.$$
It is easily seeen that the four maximal lines of $\Bset$ pass through one point from each of
$\Xset_1,\Xset_2, \Xset_3.$ Note that the above two lines $\ell_0$ and $\ell_0',$
have the same property.
Therefore we get $\#\Lset_{\Xset_1,\Xset_2, \Xset_3}\ge 6,$ which contradicts \eqref{M332}.
\end{proof}
Thus from now on one can assume, without loss of generality, that no $3$-node line passes through the node $B_2.$

\begin{lemma} \label{33node} The set $\Bset,$ except of the lines $\ell_1$ and $\ell_2,$ may have just one more $3$-node line, which passes through the nodes $B_3, C_0, C_2,$ provided that the latter nodes are collinear.

Moreover, $(\ell_1, \ell_2)$ is the only disjoint pair of $3$-node lines in $\Bset.$
\end{lemma}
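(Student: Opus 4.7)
The plan is to classify all $3$-node lines of $\Bset$ and then settle the ``disjoint pair'' statement by inspection. Fix the labeling $C_0\in\ell_{02}$, $C_1\in\ell_{03}$, $C_2\in\ell_{12}$, $C_3\in\ell_{13}$, which is consistent with the identity $\Bset\setminus(\ell_{02}\cup\ell_{12})=\{B_3,C_1,C_3\}$ from the proof of Lemma~\ref{lem:B2B3}. To pin down $\ell_1$, note that it is a $3$-node line of $\Bset$ through $D_1=\ell_{03}\cap\ell_{12}$, and its remaining two $\Bset$-nodes must lie outside $\ell_{03}\cup\ell_{12}$, since otherwise $\ell_1$ would coincide with $\ell_{03}$ or $\ell_{12}$ and inherit a fourth $\Bset$-node from that maximal line. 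A direct count gives $\Bset\setminus(\ell_{03}\cup\ell_{12})=\{D_2,C_0,C_3\}$, and the node $D_2$ is excluded because a $3$-node line in $\Bset$ carries at most one $2_m$-node. Hence $\ell_1=\{D_1,C_0,C_3\}$, and symmetrically $\ell_2=\{D_2,C_1,C_2\}$.

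Next I would enumerate the candidate additional $3$-node lines, each of type $(2_m,1_m,1_m)$ or $(1_m,1_m,1_m)$. By the choice of $B_0,B_1$ and Lemma~\ref{lem:B2B3}, no $3$-node line passes through $B_0,B_1,B_2$. Repeating the complement computation at $B_3$ yields $\Bset\setminus(\ell_{03}\cup\ell_{13})=\{B_2,C_0,C_2\}$; discarding the $2_m$-node $B_2$ shows that the only $(2_m,1_m,1_m)$ candidate through $B_3$ is $\{B_3,C_0,C_2\}$. The only remaining candidates are $(1_m,1_m,1_m)$, i.e., collinear triples of $C$-nodes.

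To eliminate the $C$-triples I would use the elementary observation that each of the four possible triples $\{C_0,C_1,C_2\},\{C_0,C_1,C_3\},\{C_0,C_2,C_3\},\{C_1,C_2,C_3\}$ contains either the pair $\{C_0,C_3\}$ or the pair $\{C_1,C_2\}$. Since $\{C_0,C_3\}\subset\ell_1$ and $\{C_1,C_2\}\subset\ell_2$, any line $m$ through one such pair must coincide with $\ell_1$ or $\ell_2$ (two points determine a line); a hypothetical third $C$-node on $m$ would then belong to $\ell_1\cap\Bset=\{D_1,C_0,C_3\}$ or $\ell_2\cap\Bset=\{D_2,C_1,C_2\}$, but in each of the four triples the ``extra'' $C$-node is none of these, a contradiction.

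Putting everything together, the $3$-node lines of $\Bset$ are exactly $\ell_1$, $\ell_2$, and, precisely when $B_3,C_0,C_2$ happen to be collinear, $\{B_3,C_0,C_2\}$. For the ``Moreover'' clause, $\ell_1=\{D_1,C_0,C_3\}$ and $\ell_2=\{D_2,C_1,C_2\}$ share no $\Bset$-node and are therefore disjoint, while the potential third line meets $\ell_1$ at $C_0$ and $\ell_2$ at $C_2$ and so fails to be disjoint from either. Hence $(\ell_1,\ell_2)$ is the unique disjoint pair of $3$-node lines in $\Bset$. The only step with genuine content is the combinatorial elimination of the $C$-triples via the ``two points determine a line'' trick; the rest is routine case enumeration built on the structural setup already in place.
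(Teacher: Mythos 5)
Your proof is correct and follows essentially the same route as the paper's: force any additional $3$-node line through one of $B_0,\dots,B_3$, use the choice of $B_0,B_1$ and Lemma~\ref{lem:B2B3} to pin it to $B_3$, identify it as $\{B_3,C_0,C_2\}$, and note that it meets $\ell_1$ at $C_0$ and $\ell_2$ at $C_2$, so $(\ell_1,\ell_2)$ remains the only disjoint pair. The paper dismisses the all-$C$ triples in one stroke (any further $3$-node line must contain a node of $\Bset$ outside $\ell_1\cup\ell_2$, hence one of the $B_i$), whereas you enumerate the four triples explicitly; this is only a difference in verbosity, not in substance.
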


\begin{proof} Indeed, any $3$-node line must pass through a node of $\Bset$ outside of the lines $\ell_1$ and $\ell_2,$ i.e., through one of $B_0,\ldots,B_3.$ Thus it must pass through $B_3$  and therefore also through $C_0$ and $C_2,$ provided that the latter triple of nodes is collinear (see Fig. \ref{case2}). It remains to note that the third $3$-node line intersects both of the lines $\ell_1$ and $\ell_2$ at nodes of $\Bset.$\end{proof}

\begin{lemma}\label{lem:B012} There is a type $3_\Aset$ $4$-node line through each of the nodes  $B_0, B_1,$ and $B_2.$ Moreover, these lines are used by exactly $3$ nodes from $\Aset.$
\end{lemma}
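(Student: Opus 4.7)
The plan is to carry out a line-usage count at each of $B_0, B_1, B_2$, adapting the technique from Lemma~\ref{lem:B0} and Step~1 above. I describe the strategy for $B = B_0$; the case $B_1$ is symmetric, and $B_2$ is analogous with different special lines. Since $p_A^\star(B_0) = 0$ for every $A \in \Aset$, at least one of the six linear factors of $p_A^\star$ must pass through $B_0$. Summing over the 18 nodes of $\Aset$ yields the lower bound
\[
\sum_{\ell \ni B_0} \#_\Aset(\ell) \ge 18,
\]
where $\#_\Aset(\ell)$ denotes the number of nodes of $\Aset$ using $\ell$.

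Next, I will bound the individual usages. The line $\ell_{01}$ is of type $3_\Aset$ and contributes exactly $6$ usages, by Lemma~\ref{lem:B0}. The two maximal lines $\ell_{02}, \ell_{03}$ of $\Bset$ through $B_0$ are each of type $\le 2_\Aset$ (otherwise they would be $7$-node lines in $\Xset$), hence each is used at most once by Lemma~\ref{lem01}. For any other $3_\Aset$-type line $\wti\ell$ through $B_0$, the hypothesis of Lemma~\ref{lem02} is met: $B_0$ is a $2_m$-node of $\Bset$, and at least one of $\ell_{12}, \ell_{13}$ (neither of which contains $B_0$) supplies $4$ collinear nodes in $\Bset \setminus \wti\ell$, because $\wti\ell$ meets any maximal line of $\Bset$ in at most one $\Bset$-node. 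Hence such an $\wti\ell$ is used by at most $3$ nodes of $\Aset$. Combining these with the identity $m_1(B_0) + 2m_2(B_0) + 3m_3(B_0) = 18$ and $m_3(B_0) \le 5$ (Lemma~\ref{lem:m36}), I will deduce that at least one $3_\Aset$-type line $\wti\ell \neq \ell_{01}$ through $B_0$ is used by exactly $3$ nodes of $\Aset$.

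It then remains to verify that $\wti\ell$ is a $4$-node line, that is, $B_0$ is its only $\Bset$-node. Any $3_\Aset$-type line through $B_0$ containing a second $\Bset$-node must join $B_0$ to a $C$-node lying on $\ell_{12}$ or $\ell_{13}$, since every other $\Bset$-node determines with $B_0$ one of $\ell_{01}, \ell_{02}, \ell_{03}$. At most two such ``exceptional'' lines exist, so the pigeon-hole applied to $m_3(B_0) \ge 4$ produces a $4$-node $3_\Aset$ line used $3$ times; the remaining case $m_3(B_0) = 3$, in which both exceptional lines would need to be used $3$ times each, will be ruled out using Proposition~\ref{prp:5nl} combined with the structural rigidity of $\Aset_6$. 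The case $B_1$ is symmetric to $B_0$, and for $B_2$, Lemma~\ref{lem:B2B3} ensures no $3$-node line of $\Bset$ passes through $B_2$, so the same strategy applies with $\ell_{23}, \ell_{02}, \ell_{12}$ replacing $\ell_{01}, \ell_{02}, \ell_{03}$; an analogue of Lemma~\ref{lem:B0} for $\ell_{23}$ will be the first step in that case.

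The main obstacle is the $4$-node certification: showing that the $3$-times-used $3_\Aset$ line furnished by the count is truly a $4$-node line, and not a $5$- or $6$-node line containing additional $\Bset$-nodes. This will require careful book-keeping of the positions of the $\Bset$-nodes relative to $\ell_{12}, \ell_{13}$, together with Lemma~\ref{lem:26664} excluding the $(6,6,6,4,3,2)$ m-d sequence within $\Aset$ and Propositions~\ref{prp:5nl} and~\ref{prp:4nl3} to pin down the structure of the $3_\Aset$ line found.
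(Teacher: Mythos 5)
Your proposal follows essentially the same route as the paper's proof: the usage count $\sum_{\ell\ni B}\#_{\Aset}(\ell)\ge 18$ against the per-line bounds ($6$ for the distinguished line $\ell_{01}$, resp.\ $\ell_{23}$; at most $1$ for lines of type $\le 2_{\Aset}$ by Lemma~\ref{lem01}; at most $3$ for the remaining $3_{\Aset}$ lines by Lemma~\ref{lem02}) forces $m_1(B)+m_2(B)\le 3$, hence $m_3(B)\ge 4$, and since at most three $3_{\Aset}$ lines through $B$ can carry a second node of $\Bset$, the slack analysis leaves a $4$-node $3_{\Aset}$ line used exactly three times --- precisely the paper's argument. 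Three small touch-ups: (a) the hypothesis of Lemma~\ref{lem02} is met because $\wti\ell$ carries at most one node of $\Bset$ besides $B$ (no $3$-node line of $\Bset$ passes through $B$), so it misses all four $\Bset$-nodes of at least one of $\ell_{12},\ell_{13}$ --- your stated reason alone would still allow $\wti\ell$ to clip both lines; (b) the residual case $m_3(B_0)=3$ you propose to exclude via Proposition~\ref{prp:5nl} never arises, since $m_1+2m_2+3m_3=18$ and $m_1+m_2\le 3$ already give $m_3\ge 4$; (c) for $B_2$ no new analogue of Lemma~\ref{lem:B0} is needed, as $\ell_{23}$ is the invariable line $\ell_4$, already used by all six nodes of $\Aset_6$.
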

\begin{proof} Denote by $B$ any of the nodes $B_0, B_1,B_2.$
Note that there are no $6$-node lines through $B.$
Then we get, from Lemma \ref{lem02},  that any type $3_\Aset$   $5$-node line through $B,$ except of $\ell_3$ or $\ell_4$ is used by $\le 3$ nodes from $\Aset.$

Thus,  the number of usages of the lines through $B$ with the nodes from $\Aset$ equals at most:
\begin{equation}\label{6}m_2(B)+6+3(m_3(B)-1) \ge 18 =m_1(B) +2m_2(B)  + 3m_3(B).
\end{equation}
Hence we obtain that
$m_1+m_2\le 3.$

Let us denote by $max_{use}:=max_{use}(B)$ the maximum possible usage of lines through $B$ we obtained, i.e., $max_{use}=m_2+3m_3+3.$

Then, as it follows from the equality in \eqref{6}, the quantity $m_1 +2m_2$ is divisible by $3.$
Thus
the following four cases are possible here.

1) $m_1=m_2=0,$ then $m_3=6,$ which contradicts Lemma \ref{lem:m36}.

2) $m_1=m_2=1,$ then $m_3=5,\ max_{use}=19,$

3) $m_1=3, m_2=0,$ then $m_3=5,\ max_{use}=18,$

4) $m_1=0, m_2=3,$ then $m_3=4,\ max_{use}=18.$

Note that in each of the above cases 2), 3), 4) there are at least 4,5,4 lines of type $3_\Aset$, respectively. Since in the case 2)  $max_{use}=19=18+1$ one of the type $3_\Aset$ lines may be used
less than $3$  times, or more precisely $2$ times.

It remains to take into account that from these four lines only three may be $5$-node lines. For example, for $B=B_0$ these three lines are $\ell_{B_0B_1}, \ell_{B_0C_1}$ and $\ell_{B_0C_2}$ (see Fig. \eqref{case2}). The remaining one certainly is a $4$-node line.
\end{proof}

Consider a $4$-node line $\wti\ell$ through $B_0, B_1, B_2,$ mentioned in Lemma \ref{lem:B012}, used by exactly $3$ nodes of $\Aset.$
According to Proposition \ref{prp:4nl3} the $\wti\ell$-m-d sequence of each mentioned triple of nodes is either

(a) $(\wti 4,6,6,5,4,2)$ or

(b) $(\wti 4,6,5,5,5,2).$

Note that the first five lines in the respective $\wti\ell$-m-line sequences are invariable for the triples of nodes.

Now assume that the case (a) holds for a triple of nodes. Denote a respective $\wti\ell$-m-line sequence  for the nodes of the triple by $\wti\ell, \ell_2', \ldots, \ell_6'.$
Consider the  m-d sequence for the nodes of $\Aset_6:$  $(6,6,5,5,3,2),$ and the  respective m-line sequence $\ell_1, \ldots, \ell_6.$ Note that $\Aset_6$ is a $GC_3$ set.

We get from Lemma \ref{33node} that the pair of the lines $\ell_2',\ell_3'$ coincides with $\ell_1,\ell_2.$ Then note that in the set $\Bset\setminus (\ell_1\cup\ell_2)$ the only $3_\Aset$ lines with two primary nodes are the lines $\ell_3$ and $\ell_4.$ Thus $\ell_4'$ coincides with one of them.

Now, in view of Corollary \ref{cor22}, we readily obtain that any node $E$ of $\Aset_6,$  besides the lines $\ell_2',\ell_3',\ell_4',$ uses also the lines $\wti\ell$ and $\ell_5'.$ Indeed, in view of Proposition \ref{prp:n+1ell}, the node $E$ uses one of the lines $\wti\ell,\ell_5'$ to which it does not belong. Then we get that $E$ uses also the other line. This is a contradiction, since outside of the curve $\wti\ell\ell_2'\cdots\ell_5'\in\Pi_5$ there are only $3$ nodes.

It remains to consider the case when all the three $\wti\ell$-m-d sequences equal (b). Denote a respective $\wti\ell$-m-line sequence  by $\wti\ell, \ell_2'', \ldots, \ell_6''.$

\begin{lemma}\label{nor} (i) The above three triples are disjoint in this case.

\noindent (ii) Suppose the line $\ell_2''$ with $6$ primary nodes for a triple  coincides with one of the lines $\ell_1$ or $\ell_2.$ Then the triple is not a subset of the set $\Aset_6.$
\end{lemma}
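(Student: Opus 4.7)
The plan is to establish part (ii) first and then invoke it inside the argument for part (i). Throughout, write $\mathcal{T}_i$ for the triple of nodes using the $4$-node line $\wti\ell_i$ through $B_i$. For (ii), I argue by contradiction: suppose $\mathcal{T}_i \subseteq \Aset_6$ and, without loss of generality, that $\ell_2'' = \ell_1$ (the case $\ell_2'' = \ell_2$ is analogous). Every node of $\mathcal{T}_i$ then uses the four $\Aset_6$-invariable lines $\ell_1, \ell_2, \ell_3, \ell_4$ together with $\wti\ell_i$, producing five lines shared by the whole triple. Proposition \ref{prp:4nl3} forces these to coincide with $\{\wti\ell_i, \ell_2'', \ell_3'', \ell_4'', \ell_5''\}$, and since $\ell_2'' = \ell_1$ we deduce $\{\ell_3'', \ell_4'', \ell_5''\} = \{\ell_2, \ell_3, \ell_4\}$, each having primary count $5$ in the $\wti\ell_i$-m-line sequence. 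Since $\ell_3 \ell_1 \ell_2 \ell_4$ is a maximal quartic containing exactly $22 = 6+6+5+5$ nodes, its pairwise intersections lie outside $\Xset$; hence attaining primary count $5$ on the $5$-node lines $\ell_3 = \ell_{01}$ and $\ell_4 = \ell_{23}$ forces $\ell_3 \cap \wti\ell_i \cap \Xset = \ell_4 \cap \wti\ell_i \cap \Xset = \emptyset$. But $B_i \in \wti\ell_i$, while $B_0, B_1 \in \ell_3$ and $B_2 \in \ell_4$, so $B_i \in \wti\ell_i \cap \ell_3$ for $i \in \{0, 1\}$ or $B_i \in \wti\ell_i \cap \ell_4$ for $i = 2$, a contradiction in every case.

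For (i), suppose to the contrary that a node $A$ lies in two of the triples, WLOG $A \in \mathcal{T}_0 \cap \mathcal{T}_1$, so $A$ uses both $4$-node lines $\wti\ell_0$ and $\wti\ell_1$. In the $\wti\ell_0$-m-line sequence $(\wti 4, 6, 5, 5, 5, 2)$ of case (b) applied to $A$, the line $\wti\ell_1$ of size $4$ can only occupy the sixth position, with primary count $2$; symmetrically $\wti\ell_0$ occupies the sixth position of the $\wti\ell_1$-m-line sequence of $A$. The remaining four lines $M_1, M_2, M_3, M_4$ used by $A$ are therefore common to both orderings. Set $a_i = |M_i \cap \wti\ell_0 \cap \Xset|$, $b_i = |M_i \cap \wti\ell_1 \cap \Xset|$, and $c = |\wti\ell_0 \cap \wti\ell_1 \cap \Xset|$. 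The primary-$2$ conditions yield
$$c + \sum_{i=1}^{4} a_i = c + \sum_{i=1}^{4} b_i = 2,$$
and comparison of the total node count on $A$'s six lines with the sum $27$ of primary counts forces the pairwise intersections among the $M_i$ to miss $\Xset$, leaving only the two cases $c \in \{0, 1\}$.

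In the case $c = 0$: the size relations $\mathrm{size}(M_j) = 5 + a_j$ (for $j \geq 2$, with $M_1$ of size $6$) force three of $M_1, M_2, M_3, M_4$ to be $6$-node lines pairwise disjoint in $\Xset$, so their union $M_1 \cup M_2 \cup M_3$ is a maximal cubic containing $18$ nodes. By \eqref{maxmax} the complement $\Xset \setminus (M_1 \cup M_2 \cup M_3)$ is a $GC_3$ set, yet the remaining $M$-line, of size $5$, lies entirely in this complement and contributes $5$ collinear nodes, violating the $3$-independence of a $GC_3$ set. In the case $c = 1$: exactly two of $M_1, M_2, M_3, M_4$ are $6$-node and the other two $5$-node, and a greedy computation yields the m-d sequence $(6, 6, 5, 5, 3, 2)$ for $A$; hence $A \in \Aset_6$, and matching sizes forces $\{M_1, M_2\} = \{\ell_1, \ell_2\}$ and $\{M_3, M_4\} = \{\ell_3, \ell_4\}$. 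Proposition \ref{prp:4nl3} then propagates the five shared lines $\wti\ell_0, \ell_1, \ell_2, \ell_3, \ell_4$ to the whole triple $\mathcal{T}_0$, so $\mathcal{T}_0 \subseteq \Aset_6$ with $\ell_2'' = M_1 \in \{\ell_1, \ell_2\}$, contradicting part (ii). The main technical obstacle is the combinatorial bookkeeping needed to reduce the intersection data $(a_i, b_i, c)$ to exactly these two cases and to verify that no non-generic configuration of the $M_i$ escapes the argument.
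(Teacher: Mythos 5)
Your part (ii) is essentially sound, though it runs in the opposite direction from the paper's argument: the paper proves that $\{\ell_3'',\ell_4'',\ell_5''\}$ \emph{cannot} coincide with $\{\ell_2,\ell_3,\ell_4\}$ and then separates the triple from $\Aset_6$ by a suitably chosen variable line of $\Aset_6$, whereas you assume containment, force the two invariable quintuples to coincide, and read off a contradiction from $B_i\in\wti\ell_i\cap(\ell_3\cup\ell_4)$ spoiling a primary count of $5$ on a $5$-node line. That contradiction is valid. One step is glossed over: two $5$-element subsets of the six lines used by a node need only share four elements, so you must exclude the possibility that some $\ell_j$, $j\le4$, serves as the variable line $\ell_6''$ of the $\wti\ell_i$-sequence; this is easy (each $\ell_6''$ passes through two nodes of the triple, which lie off $\ell_1\cup\dots\cup\ell_4$ once the triple sits in $\Aset_6$), but it needs saying.

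Part (i) has genuine gaps. First, the claim that the pairwise intersections of $M_1,\dots,M_4$ miss $\Xset$ does not follow from comparing node counts with the primary-count sum $27$: that comparison only yields $\sum_i|M_i|=19+E$, where $E$ is the total incidence excess, and $E$ already absorbs $c+\sum a_i+\sum b_i$; nothing in the bookkeeping excludes, say, $a_2=0$ together with $|M_1\cap M_2\cap\Xset|=1$ while keeping $|M_2|\le6$. You concede this yourself, yet the entire reduction to the two clean cases $c\in\{0,1\}$ with sizes $|M_j|=5+a_j$ rests on it. Second, and more seriously, in the case $c=1$ the inference ``$A$ has m-d sequence $(6,6,5,5,3,2)$, hence $A\in\Aset_6$'' is a non sequitur: $\Aset_6$ is by definition the set of six nodes using the specific line $\ell_{01}=\ell_3$, not the set of nodes with that m-d sequence, and ``matching sizes'' does not identify $\{M_1,\dots,M_4\}$ with $\{\ell_1,\dots,\ell_4\}$ --- by Lemma \ref{33node} there may be a third $3$-node line in $\Bset$, hence a third candidate $6$-node line, and the two $5$-primary lines need not be $\ell_3,\ell_4$. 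The paper's own proof of (i) needs none of this machinery: two distinct triples use distinct $4$-node lines $\wti\ell\neq\wti\ell'$, and if their invariable quadruples $\ell_2'',\dots,\ell_5''$ all coincided then the $4$-node lines would be forced to coincide as well; hence the two invariable quintuples differ in at least two lines, so a node common to both triples would have to use at least seven lines --- impossible. You should either adopt that count of shared invariable lines or close the two holes above.
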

\begin{proof}
(i) Consider a pair of triples. Note that for them a line among the $4$ invariable lines $\ell_2'', \ldots, \ell_5''$ is different. Indeed, assume conversely that all threse lines coincide with each other. Then as above we readily get that also the $4$-node lines $\wti\ell$ coincide, which is a contradiction. 
Thus the invariable lines for each pair of triples differ at least with two lines. Therefore for any variable line the line sequences are different. Hence the triples are disjoint.

(ii) Assume that the line $\ell_2''$ 
coincides, say,  with $\ell_1.$  Then the three invariable lines  $\ell_3'',\ell_4'',\ell_5''$ cannot coincide with $\ell_2,\ell_3,\ell_4.$ Indeed, otherwise any node in $\Aset,$ together with $\ell_2'',\ldots,\ell_5''$ uses also the $4$-node line $\wti\ell,$ which is a contradiction since outside of $\wti\ell,\ell_2'',\ldots,\ell_5''$ there are only $3$ nodes. 

Thus, by taking into account $\wti\ell,$ we have two invariable lines in the m-line sequence of the triple that are not present in $\{\ell_1,\ldots,\ell_4\}.$
Next let us fix the variable line $\ell_5$ such that it differs from the two mentioned lines. Indeed, we may choose as $\ell_5$ any maximal line of the  $GC_3$ set $\Aset_6.$ Thus the considered triple is disjoint with the three nodes of $\Aset_6\setminus\ell_5$ that use the lines $\ell_1,\ldots,\ell_5.$ It remains to note that the triple does not coincide with the three nodes of $\Aset_6\cap\ell_5,$ since the latter three nodes are collinear.\end{proof}

Now suppose that for all three triples the the line $\ell_2''$ with $6$ primary nodes is the third possible $3$-node line of $\Bset,$ different from $\ell_1$ and $\ell_2$ (Lemma \ref{33node}). Then, in view of Lemma \ref{nor}, (i), this line is used by $9=3+3+3$ nodes, which is a contradiction.

Finally suppose that for one of the triples the line $\ell_2''$ coincides with one of the two disjoint $6$-node lines, say with $\ell_1.$  Then, in view of Lemma \ref{nor}, (ii), the line $\ell_1$ is used by at least $7=6+1$ nodes from $\Aset.$ Observe that $\ell_1$ is used by a node from $\Bset$ too. Thus in all the $6$-node line $\ell_1$ is used by at least $8$ nodes. This, in view of Proposition \ref{prp:6810} and Lemma \ref{lem:26664}, is a contradiction.

\emph{\bf Step 3.} The set $\Bset$ is a principal lattice (with three maximal lines).

Concider a $2_m$-node $B\in\Bset.$ 
\begin{figure}
\begin{center}
\includegraphics[width=10.0cm,height=5.cm]{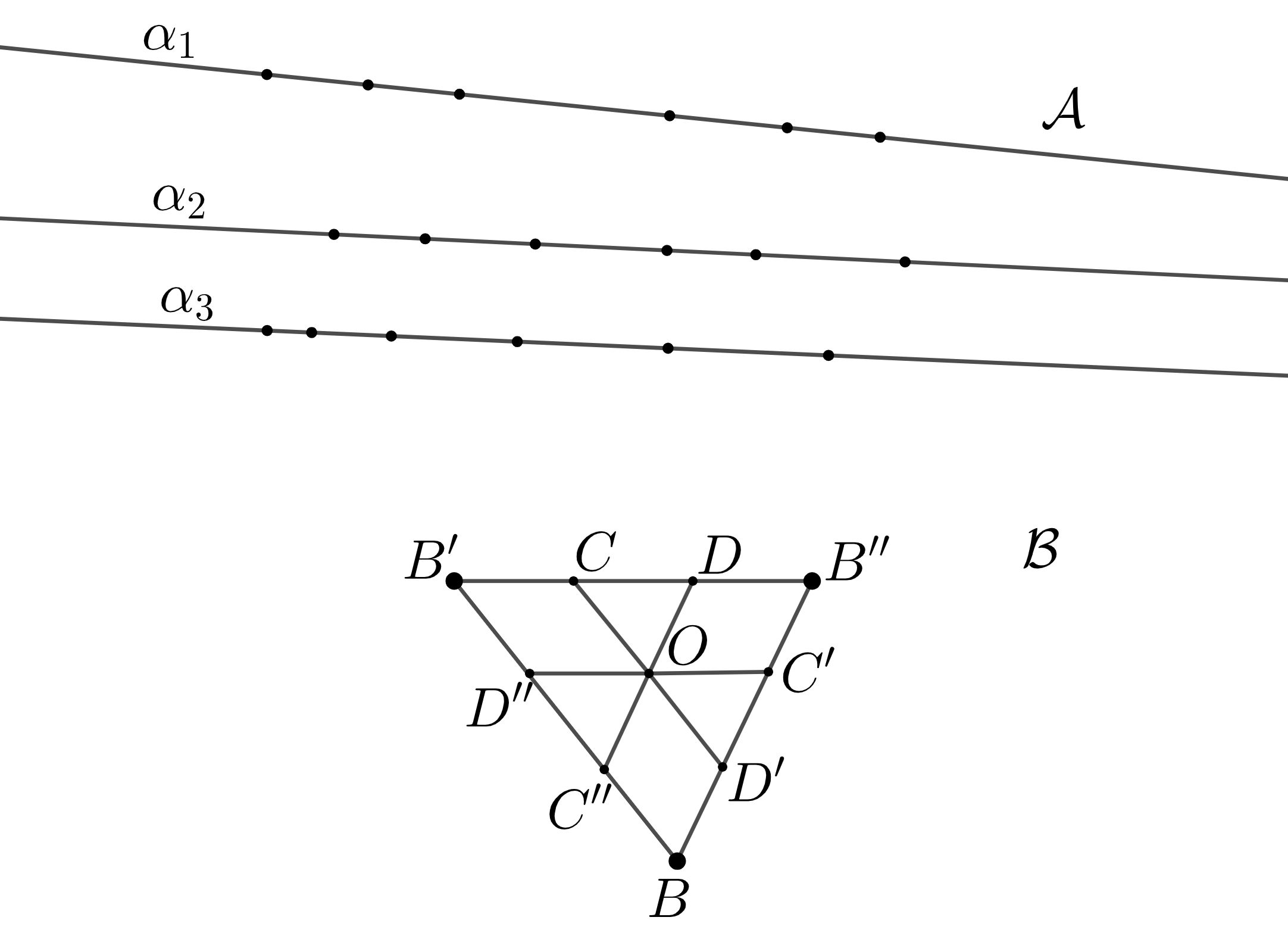}
\end{center}
\caption{ The set $\Bset$ is a principal lattice} \label{case3}
\end{figure}
Note that there is no $3$-node line through $B$ within $\Bset$  (see Fig. \ref{case3}).

Assume that the line $\wti\ell=\ell_{BC}$ (or $\ell_{BD}$) is a $5$-node line and is used $\ge 5$ times from $\Aset.$ Then, according to Proposition \ref{prp:5nl}, it is used by exactly $6$ nodes from $\Aset$ and the $6$ nodes besides $\wti\ell$ share also $3$ lines with $6,6,5$ primary nodes. The two $6$-node lines are $3$-node disjoint lines within $\Bset.$ Thus they pass through the $1_m$ nodes $C,C',C'',$ and $D,D',D'',$ respectively. This is a contradiction since the node $C$ belongs to the line  $\wti\ell$ and is not primary.

Thus the lines $\ell_{BC}$ and $\ell_{BD}$ are used $\le 4$ times from $\Aset.$
Note that the line $\ell_{BO},$ as well as any $4$-node line through $B,$ according to Lemma \ref{lem02}, is used $\le 3$ times from $\Aset.$

Hence the maximal possible number of usages of the lines through $B$ with the nodes from $\Aset$ equals:
\begin{equation*}\label{4+4}m_2(B)+4+4+3(m_3(B)-2) \ge 18 =m_1(B) +2m_2(B)  + 3m_3(B).
\end{equation*}
Hence
$m_1+m_2\le 2.$
Then $m_1 +2m_2$ is divisible by 3 and $max_{use}(B)=m_2+3m_3+2.$
Thus the following two cases are possible:

1) $m_1=m_2=0, m_3=6,$ which contradicts  \ref{lem:m36},

2) $m_1=m_2=1, m_3=5,\ max_{use}=18.$

Now we readily conclude that:

$B1)$ The lines $\ell_{BC}$ and $\ell_{BD}$ are $5$-node lines used exactly $4$ times from $\Aset.$

$B2)$ No node of $\Aset$ may use two lines through $B.$ 

$B3)$ There are at least two $4$-node lines through $B$ that are of type $3_\Aset$ and are used by exactly $3$ nodes from $\Aset.$

$B3')$ Note that if the line $\ell_{BO}$ is not of  type $3_\Aset$ then there are three above mentioned $4$-node lines through $B.$

Then let us consider the usages of lines passing through the $0_m$ node $O\in\Bset$ by the nodes of $\Aset.$
Consider the three lines through $O:\ OC, OC', OC'',$ which can be $6$-node lines.
These lines are used by $3$ nodes of $\Bset$ and therefore, by Proposition \ref{prp:6810}, they can be used by at most $4$ nodes of $\Aset.$

Similarly the lines $OB, OB', OB''$ can be $5$-node lines, in which case, according to Lemma \ref{lem02}, they can be used by at most $3$ nodes from $\Aset.$

Thus for the maximal possible number of usages of the lines through $O$ with the nodes from $\Aset$ we have:
$$m_2(O)+4+4+4+3(m_3(O)-3) \ge 18=m_1(O) +2m_2(O)  + 3m_3(O).$$
Hence
$m_1+m_2\le 3.$
Then $m_1 +2m_2$ is divisible by 3 and $max_{use}(O)=m_2+3m_3+3.$
Thus the following cases are possible

1)  $m_1=m_2=0, m_3=6,$ which contradicts  Lemma \ref{lem:m36}.

2) $m_1=m_2=1, m_3=5,\ max_{use}=19,$

3) $m_1=3, m_2=0, m_3=5,\ max_{use}=18,$

4) $m_1=0, m_2=3, m_3=4,\ max_{use}=18.$

Now we readily conclude that

$O1)$ At least two of the three lines through $O:\ OC, OC', OC'',$  are $6$-node lines and are used exactly $4$ times from $\Aset.$

$O2)$ At most one node of $\Aset$ may use two lines through $O$ all others may use only one line through $O.$

$O3)$ From the six lines through $O:\ OC, OC', OC'', OB,  OB', OB'',$ in view of Lemma \ref{lem:m36}, at most five are of type $3_\Aset$ and possibly are used by $\ge 3$ nodes from $\Aset.$

Thus, by the remarks $B3),$ and $B3'),$ there are at least six, possibly seven,  type $3_\Aset\ 4$-node lines, through the $2_m$-nodes of $\Bset,$ that are used by exactly $3$ nodes from $\Aset.$
In view of Proposition \ref{prp:4nl3} for these $4$-node lines we have one of the following two $\wti\ell$-m-d sequences:

(c) $(\wti 4,6,6,5,4,2),$

(d) $(\wti 4,6,5,5,5,2).$

\begin{lemma}\label{2nd4}
 The second $4$ in the $\wti\ell$-m-d sequence (c), in a respective $\wti\ell$-m line sequence, corresponds to a $3_\Aset$ $4$-node line passing through $B,B'$ or $B''.$
\end{lemma}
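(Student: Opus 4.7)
The plan is to analyze the $\wti\ell$-m-line sequence $(\wti\ell,\ell_2,\ldots,\ell_6)$ associated with case (c), whose primary counts are $(4,6,6,5,4,2)$, and use the principal lattice structure of $\Bset$ to pin down the line $\ell_5$ of primary $4$.

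First I would show that the node $A_0\in\Xset$ with $p_{A_0}^\star=\wti\ell\,\ell_2\cdots\ell_6$ lies in $\Aset$. Otherwise $A_0\in\Bset$, and Lemma~\ref{lem:Bset}(i) would force the three lines $\alpha_1,\alpha_2,\alpha_3$ to appear among $\ell_2,\ldots,\ell_6$; but each $\alpha_j$ meets the $3_\Aset$ line $\wti\ell$ at one of its $\Aset$-nodes, so each such $\alpha_j$ contributes primary count $5$ in the reduced sequence, inconsistent with only one position of value $5$ in $(6,6,5,4,2)$. Hence $A_0\in\Aset$, and by Lemma~\ref{lem:Bset}(ii) no $\ell_i$ with $i\geq 2$ coincides with any $\alpha_j$, whence $a_i:=|\ell_i\cap\Aset|\leq 3$ for each such $\ell_i$.

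Next I would classify the $6$-node lines $\ell_2,\ell_3$ of $\Xset$. In the principal lattice, every non-maximal $6$-node line of $\Xset$ is either of type-$1$ (through the center $O$ of the lattice, with three $\Aset$-nodes and three $\Bset$-nodes on a $3$-node line of $\Bset$) or of type-$2$ (extending a maximal line of $\Bset$, with two $\Aset$-nodes and four $\Bset$-nodes). Full primary count $6$ forces $\ell_2,\ell_3$ to be pairwise disjoint from $\wti\ell$ and from each other in $\Xset$; this excludes the pairings of two type-$1$ lines (which would share $O$), two type-$2$ lines (which would share a corner of $\Bset$), and mixed types in different pencil directions (which would share an edge node of the lattice). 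The only surviving configuration places $\ell_2,\ell_3$ parallel within the pencil of $\Bset$ whose maximal line avoids the corner $B\in\wti\ell$, one of type-$1$ and the other of type-$2$; together they account for $1+3+4=8$ of the ten nodes of $\Bset$, leaving exactly two $1_m$-nodes.

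Since $\ell_4$ has primary $5=a_4+b_4$ with $a_4\leq 3$, we must have $b_4\geq 2$, so both remaining $\Bset$-nodes are primary on $\ell_4$, which is thus a $5$-node line through these two $\Bset$-nodes and three $\Aset$-nodes. Finally for $\ell_5$, from $a_5+b_5=4$ with $a_5\leq 3$ we need $b_5\geq 1$; combined with the exhaustion of $\Bset$-nodes as primaries on $\wti\ell\cup\ell_2\cup\ell_3\cup\ell_4$, the $\Bset$-incidence on $\ell_5$ must occur at one of the corners $B,B',B''$, which are the only $\Bset$-nodes of the principal lattice lying on the lines of the $\wti\ell$-m-line sequence that contain corners (namely $\wti\ell$ itself and the type-$2$ line $\ell_3$). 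Combined with the bound $a_5\leq 3$, this identifies $\ell_5$ as a $3_\Aset$ $4$-node line through $B$, $B'$, or $B''$, as claimed. The main obstacle is the delicate reconciliation of primary versus secondary node counts in the last step, which hinges on the exclusive role of the corners among the $\Bset$-nodes of the principal lattice at the intersection of its maximal lines.
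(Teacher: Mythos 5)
Your argument breaks down at the classification of $\ell_2$ and $\ell_3$, and the error propagates into the final step. A direct count of primary $\Bset$-nodes settles what these lines must be: since none of $\ell_2,\ldots,\ell_6$ coincides with a line of $\Lset_3$, each meets $\Aset$ in at most $3$ nodes, so the primary counts $6,6,5,4$ force at least $3,3,2,1$ primary $\Bset$-nodes on $\ell_2,\ell_3,\ell_4,\ell_5$; together with $B\in\wti\ell$ this already gives $10=\#\Bset$, so all of these bounds are equalities. In particular each of $\ell_2,\ell_3$ carries exactly three primary $\Bset$-nodes, which rules out your ``type-2'' lines: an extended maximal line of $\Bset$ avoiding $B$ would contribute four primary $\Bset$-nodes, and one through $B$ would be a $7$-node line of $\Xset$, i.e., a maximal line. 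Your ``only surviving configuration'' (one line through $O$ plus one extended maximal line of $\Bset$, covering eight $\Bset$-nodes together with $\wti\ell$) is therefore impossible, and it is also internally inconsistent with your own last step: if $\wti\ell\cup\ell_2\cup\ell_3\cup\ell_4$ exhausted $\Bset$, then $\ell_5$ would have no primary $\Bset$-node and hence at most $3<4$ primary nodes --- a secondary incidence at a corner cannot contribute to the primary count $4$. The configuration the paper actually works with --- $\ell_2,\ell_3$ covering the six $1_m$-nodes, $\ell_4$ being one of $B'O$, $B''O$, and the unique remaining primary $\Bset$-node for $\ell_5$ being the remaining corner --- does not occur in your taxonomy at all.

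Separately, even once $\ell_5$ is known to pass through a corner, the lemma asserts that $\ell_5$ is a $4$-node line, and you have not excluded additional (secondary) $\Bset$-nodes on it; the bound $a_5\le 3$ says nothing about this. This is the substantive second half of the paper's proof: $\ell_5$ cannot pass through $O$ by remark $O2)$, and if it passed through a $1_m$-node $C'$ then by remark $B1)$ the $5$-node line through $B''$ and $C'$ is used by a fourth node $F\in\Aset$, which by Proposition \ref{prp:n+1ell} is successively forced to use $\ell_2,\ell_3,\ell_4$ and finally the $4$-node line $\wti\ell$, so that $\wti\ell$ is used four times, contradicting Proposition \ref{prp:4nl4} together with Lemma \ref{lem:26664}. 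Without this step the conclusion that $\ell_5$ is a $3_\Aset$ $4$-node line is not established.
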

\begin{proof}
Suppose that $\ell_1,\ldots,\ell_6$ is a respective $\wti\ell$-m-line sequence.
Then $\ell_2$ and $\ell_3$ pass through the six $1_m$-nodes of $\Bset.$ Thus the line $\ell_4$ coincides with one of the lines $B'O,$ or $B''O,$ say with $B'O$ (see Fig. \ref{case3}).

Now $\ell_5$ passes through $B''$ as the only remaining primary node in $\Bset.$ Let us show that it  does not pass through any other node of $\Bset.$

Note that $\ell_5$ cannot pass through $O$ since then each of the three nodes will use two lines passing through $O,$ which contradicts the remark $O2).$

Then assume conversely that $\ell_5$ passes through one of $1_m$ nodes, say $C'.$
As we know from the remark $B1),$ the line $\ell_{B'C'}$ is used by the fourth node of $\Aset$ denoted by $F.$
In view of Proposition \ref{prp:n+1ell} the node $F$ uses one of $\ell_2,\ell_3$ to which it does not belong and then the other. Next we readily get that $F$ uses also the lines $\ell_4$ and $\ell_1.$ Thus the $4$-node line $\ell_1$ is used by $4$ nodes which, in view of Proposition  \ref{prp:4nl4} and Lemma \ref{lem:26664}, is a contradiction.
\end{proof}

\begin{lemma} \label{lem:33d} Any two triples of nodes corresponding to two distributions of type (c) or (d)
are disjoint.
\end{lemma}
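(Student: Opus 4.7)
Mimicking Lemma~\ref{nor}, I argue by contradiction: suppose some node $A$ lies in both triples $T_1,T_2$ corresponding to distinct $4$-node lines $\wti\ell_1\ne\wti\ell_2$. Then $A$ uses both lines, so $p_A^\star=\wti\ell_1\wti\ell_2\,q$ with $q\in\Pi_4$. Since each $\wti\ell_i$ is a type-$3_\Aset$ $4$-node line through a $2_m$-node of $\Bset$, remark $B2$ forces $\wti\ell_1$ and $\wti\ell_2$ through two distinct vertices $B_1\ne B_2$ of $\{B,B',B''\}$.

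The key step is to locate $\wti\ell_2$ among the six factors of $p_A^\star$ supplied by the $\wti\ell_1$-m-line sequence coming from $T_1$. Its five invariable lines $\wti\ell_1,\ell_2,\ldots,\ell_5$ have primary counts $(4,6,6,5,4)$ in case (c) and $(4,6,5,5,5)$ in case (d), while the variable $\ell_6^A$, with count~$2$, is the line through $T_1\setminus\{A\}$. Being a $4$-node line, $\wti\ell_2$ cannot match any $5$- or $6$-node invariable line, so either $\wti\ell_2=\ell_5^{T_1}$ (the ``second $4$'', only in case (c); by Lemma~\ref{2nd4} itself a type-$3_\Aset$ $4$-node line through $\{B,B',B''\}$), or $\wti\ell_2=\ell_6^A$, in which case $\wti\ell_2\supset T_1\setminus\{A\}$. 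A symmetric dichotomy describes $\wti\ell_1$ in the $\wti\ell_2$-decomposition.

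In the $(d)$--$(d)$ sub-case (and the two mixed $(c)$--$(d)$, $(d)$--$(c)$ variants) the variable option is forced on the relevant side, so $\wti\ell_2\supset T_1\setminus\{A\}$ and $\wti\ell_1\supset T_2\setminus\{A\}$, and matching factors forces $\{\ell_2,\ldots,\ell_5\}_{T_1}=\{\ell_2,\ldots,\ell_5\}_{T_2}$ as sets. Any second common node $A'\in T_1\cap T_2$ would then force $p_A^\star=p_{A'}^\star$ up to scalar, impossible at distinct nodes; hence $|T_1\cap T_2|=1$. To exclude this remaining single overlap I examine the third node $P_3\in\wti\ell_1\cap\Aset\setminus(T_2\setminus\{A\})$: by Lemma~\ref{lem:26664} and Proposition~\ref{prp:4nl4} no fourth $\Aset$-user of $\wti\ell_1$ can exist, but tracking $P_3$ on the invariable quintic of $T_2$ inside the principal-lattice geometry of $\Bset$ forces either such a fourth use of $\wti\ell_1$ or the symmetric forbidden configuration for $\wti\ell_2$.

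The principal obstacle is the $(c)$--$(c)$ sub-case in which $\wti\ell_2=\ell_5^{T_1}$ and $\wti\ell_1=\ell_5^{T_2}$: here the two invariable quintics agree as unordered sets of five lines, so $T_1=T_2$ and the same three nodes use both $\wti\ell_1$ and $\wti\ell_2$. Unique factorization alone is insufficient, because case~(c) already has two $4$'s in its m-d sequence. The resolution combines Lemma~\ref{2nd4}, remark $B2$ (so the two $\wti\ell$'s pass through distinct vertices of $\{B,B',B''\}$), and the enumeration of type-$3_\Aset$ $4$-node lines in $B3$--$B3'$: together with the disjointness of triples sharing a common $2_m$-node (also a consequence of $B2$), one gets more distinct required triples through $\{B,B',B''\}$ than can fit inside the $18$-node set $\Aset$, completing the contradiction.
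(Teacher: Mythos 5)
Your opening reduction is sound: since $\wti\ell_2$ is a $4$-node line used by $A$, it must occur among the six linear factors of $p_A^\star$, and as it cannot coincide with a factor carrying $5$ or $6$ primary nodes, it is either the second $4$-primary-node invariable line of case (c) or the variable line $\ell_6^A$. But you then fail to supply the one observation that makes the lemma work, namely that \emph{the variable line can never be one of these $4$-node lines}. The five invariable lines of a sequence of type (c) or (d) carry $1+3+3+2+1$, respectively $1+3+2+2+2$, primary nodes of $\Bset$, i.e.\ all ten of them; so if $\ell_6^A$ were a type-$3_\Aset$ $4$-node line through a $2_m$-node of $\Bset$, the node $A$ would use two lines through that $2_m$-node, contradicting remark $B2)$. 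With this, the lemma is immediate: a common node of two distinct triples would have to use the second triple's $4$-node line as its variable line. (The remaining branch, $\wti\ell_2=\ell_5^{T_1}$ in case (c), gives $T_2=T_1$ because $\ell_5^{T_1}$ is invariable and used by exactly three nodes of $\Aset$ --- this is not a case to be contradicted but simply the situation where one triple carries two $4$-node lines, as in the paper's list $(\alpha_1,\alpha_2,\ell_2,\ell_3)$.)

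Because you leave the variable-line possibility open, the rest of your argument has to carry the whole weight, and it does not. The step ``$\wti\ell_2\supset T_1\setminus\{A\}$'' is unjustified: the two primary nodes of $A$'s variable line are whatever two nodes of $\Aset$ are left uncovered by the five invariable lines, and there is no reason they should be the other two members of $A$'s triple. The subsequent deduction that the two invariable quintics coincide as sets, and the final two reductions (``tracking $P_3$ on the invariable quintic of $T_2$ \dots forces either such a fourth use of $\wti\ell_1$ or the symmetric forbidden configuration'' and ``one gets more distinct required triples \dots than can fit inside the $18$-node set $\Aset$'') are assertions of a contradiction rather than derivations of one; no counting or configuration analysis is actually performed. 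As written, the proposal establishes only $|T_1\cap T_2|\le 1$ in some subcases and nothing definitive in the others, so it does not prove the lemma.
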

\begin{proof}
Note that the variable lines with $2$ primary nodes in respective $\wti \ell$-m line sequences cannot be equal to any of the $4$-node line. Indeed the first five lines pass through all the nodes of the set $\Bset.$
Now if a sixth line becomes $4$-node line through a $2_m$-node of $\Bset$ then we have two lines passing through the $2_m$-node, which contradicts the remark $B2).$

Then since in each case of distributions (c) and (d) we have different $4$-node lines therefore the corresponding triples are disjoint.
 \end{proof}

Now assume that for at least two pairs of $4$-node lines we have the $\wti\ell$-m-d sequence (c). Then the two disjoint lines $\ell_2$ and $\ell_3$ in the respective $\wti\ell$-m line sequence are used by two triples of nodes, i.e., by six nodes.

In view of Proposition \ref{prp:66710} we get that each of the six nodes has either m-d sequence $(6,6,6,4,3,2)$ or $(6,6,5,5,3,2).$ The first case contradicts Lemma \ref{lem:26664}. While the second sequence clearly differs from the sequence (c), since there we have two invariable $4$-node lines.

Then assume that for one pair of $4$-node lines the  $\wti\ell$-m-d sequence (c) takes place and
for other $4$-node lines the sequence (d) takes place.

\noindent In view of the remarks $B3'), O1),$ and $O3),$ let us consider the following

Case 1) There are three $6$-node lines through $O$ used by three nodes and therefore there are at least seven $4$-node lines through $2_m$-nodes, and

Case 2) There are exactly two $6$-node lines through $O$ used by three nodes and  therefore there are at least six $4$-node lines through $2_m$-nodes.

Now recall that the two disjoint $6$-node lines are $\ell_2$ and $\ell_3.$ Denote the $6$-node lines passing through $O$ by $\ell_0,\ell_0',\ell_0''.$ Finally denote the above seven possible $4$-node lines through the $2_m$-nodes of $\Bset$ by $\alpha_i, i=1,\ldots,7.$

 In Case 1) we have six different triples.
Let us consider only the $4$-node and $6$-node lines in the respective line sequences:

 $(\alpha_1, \alpha_2, \ell_2, \ell_3);\quad (\alpha_3,\ell_2);\quad (\alpha_4, \ell_{3});\quad (\alpha_5, \ell_0);\quad (\alpha_6, \ell_0');\quad (\alpha_7, \ell_0'').$

  In Case 2) we have $5$ different triples corresponding to:

   $(\alpha_1, \alpha_2, \ell_2, \ell_3);\quad (\alpha_3, \ell_2);\quad (\alpha_4, \ell_{3});\quad
   (\alpha_5,  \ell_0);\quad (\alpha_6, \ell_0').$

 Note that in both cases all the possible $6$-node lines
are used by six nodes, counted also the triple usage of each of lines $\ell_0,\ell_0',\ell_0'',$ from $\Bset.$  Therefore, in view of Proposition \ref{prp:6810} and Lemma \ref{lem:26664}, no place for another $6$-node line in the line sequences, and consequently the additional triple usage. 

Thus all the lines with $5$ primary nodes in the line sequences, actually have to be exact $5$-node lines.

Let us show that these $5$-node lines are different. Indeed, assume conversely that a $5$-node line $\wti\ell$ is in two m-line sequences used by different triples. Then $\wti\ell$ is used by $6$ nodes. As we know, by Proposition \ref{prp:66710}, these nodes must have the m-d sequence  $(6,6,5,5,3,2),$ which clearly differs from (c) and (d).

Now in Case 1) we need for $16\ (=5\times 3+1)$ and in Case 2) we need for $13\ (=4\times 3+1)$ different $5$-node lines.

Below we show that actually there are not that many $3_\Aset\ 5$-node lines, which finishes the proof in this case.

For this end let us count the number of $2$-node lines in $\Bset.$  There are $9\ (=3\times 3)$ such lines through the three $2_m$-nodes (see Fig. \ref{case3}). Then there are $3$ such lines through the $1_m$ nodes.  Here we take into account that $C,C',C''\in \ell_2,$ and $D,D',D''\in\ell_3.$ Hence in all we may have atmost $12$ lines.

Finally, let us consider the case when for all $4$-node lines the $\wti\ell$-m-d sequence  (d) takes place.
Then in Case 1) we have seven disjoint triples whose union is $\Aset,$ which is a contradiction since $\#\Aset=18.$

In Case 2) we have $6$ different triples corresponding to:

$(\alpha_1, \ell_2);\quad (\alpha_2, \ell_2);\quad (\alpha_3, \ell_3);\quad (\alpha_4, \ell_3);\quad
   (\alpha_5,  \ell_0);\quad (\alpha_6, \ell_0').$

\noindent In this case no place for another $6$-node line too. Thus again all the lines with $5$ primary nodes actually are exact $5$-node lines.
Here we need for $18\ (=6\times 3)$ $5$-node lines. As we showed above there can be atmost $12$ such lines.


The work on the part of the first named author was carried out under grant 21T-A055 from the Scientific Committee of the Ministry of ESCS RA.


\end{document}